\newtheorem{hyp}{Hypothesis}
\numberwithin{equation}{section}
\author{%
  Charles Bertucci $^1$  }
\newtheorem{Theorem}{Theorem}[section]
\newtheorem{Lemma}[Theorem]{Lemma}
\newtheorem{Rem}[Theorem]{Remark}
\newtheorem{Def}[Theorem]{Definition}
\newtheorem{Prop}[Theorem]{Proposition}
\newtheorem{Cor}[Theorem]{Corollary}
\newtheorem{Ex}[Theorem]{Example}
\newcommand{\be}{\begin{equation}}
\newcommand{\ee}{\end{equation}}
\newcommand{\ba}{\begin{aligned}}
\newcommand{\ea}{\end{aligned}}
\newcommand{\R}{\mathbb{R}}
\newcommand{\T}{\mathbb{T}}
\newcommand{\K}{\mathbb{L}}
\newcommand{\mptd}{\mathcal{P}(\mathbb{T}^d)}
\newcommand{\mprd}{\mathcal{P}(\mathbb{R}^d)}
\newcommand{\eps}{\epsilon}
\newcommand{\ca}{\color{black}}
\newcommand{\cred}{\color{red}}
\title{Stochastic optimal transport and Hamilton-Jacobi-Bellman equations on the set of probability measures}
\thanks{$^1$ : CMAP, \'Ecole polytechnique, UMR 7641, 91120 Palaiseau, France
}
\date{} 
\begin{document}

\maketitle
\begin{abstract}
We introduce a stochastic version of the optimal transport problem. We provide an analysis by means of the study of the associated Hamilton-Jacobi-Bellman equation, which is set on the set of probability measures. We introduce a new definition of viscosity solutions of this equation, which yields general comparison principles, in particular for cases involving terms modeling stochasticity in the optimal control problem. We are then able to establish results of existence and uniqueness of viscosity solutions of the Hamilton-Jacobi-Bellman equation. These results rely on controllability results for stochastic optimal transport that we also establish.
\end{abstract}
\setcounter{tocdepth}{1}
\tableofcontents
\section*{Introduction}
This paper introduces a stochastic version of the famous problem of optimal transport. We consider a dynamic formulation of the classical problem as in Benamou and Brenier \citep{benamou2000computational} and are interested in the case in which the target measure is described by a stochastic process. This problem is a state-constrained stochastic optimal control problem, which is set on the set of probability measures. We adopt the dynamic programming approach and study the associated Hamilton-Jacobi-Bellman (HJB for short) equation. In particular, we prove a general comparison principle for viscosity solutions of HJB equations on the set of probability measures. Moreover, the HJB equation associated to the stochastic optimal transport problem is associated to a singularity at terminal time which models the state constraint which has to be reached the target once the problem is over.\\
\subsection*{Optimal transport}
The optimal transport problem is one of the most famous problems in applied mathematics. It consists in finding the best way to transport a repartition of mass into another one, given a certain cost functional for the transport. Formulated first by Monge in \citep{monge1781memoire}, it has proven to be a mathematical problem of tremendous difficulty. The theoretical comprehension of this problem is now quite complete and the interest has now shifted onto more practical and numerical problems. More details on optimal transport can be found in the book of Villani \citep{villani2009optimal} or in the one of Santambrogio \citep{santambrogio2015optimal}.

A point of view which has proven to be particularly helpful to attack optimal transport is looking at a dynamic formulation of the problem. In the setting introduced by Benamou and Brenier \citep{benamou2000computational}, a time interval $[0,T]$ is given. The problem consists in transporting a measure $m_0$ into another measure $m_T$ in this time interval $[0,T]$. This formulation is somehow closer to applications as its solutions describe precisely how the mass is going to be transported. Moreover, it naturally leads to the notion of geodesics in certain sets of measures.\\

In this paper we consider an extension of the aforementioned dynamic reformulation of optimal transport, in which the final repartition of mass, or target as we shall call it, is stochastic. More precisely, we shall assume that there is Markovian stochastic process $(\nu_t)_{t \geq 0}$ such that the target repartition of mass is $\nu_T$. This problem is introduced in more details in Section \ref{sec:sot}. Remark that such an optimal transport problem falls in the category of stochastic optimal control problem, in a space of measures, with a terminal state constraint.

Let us insist upon the fact that, from the point of view of applications, this stochastic version of the optimal transport problem is natural and should prove to be of interest. Indeed, in our economy, the transportation of goods usually starts before the exact location of the addresses is known. This is for example the case for the delivery of oil in most ports, as tanker leaving the American continent often change routes depending on the price of crude oil in the major European ports. In some sense, the target measure is in this case given by a supply and demand equilibrium which is in general modeled as a stochastic process as it depends on various unpredictable factors. In a finite state space case, an analogue of this problem was studied in a mean field game framework in Bertucci et al. \citep{bertucci2021master}, we also refer to \citep{bertucci2022singular} for a related problem.

Another example is the maintenance of the storage of most major warehouse of supply networks. Indeed in practice, the different levels of storage are maintained based on projections in a first time and only later on they are adjusted to the actual level of demand. The flux of the projections could \\

\subsection*{Hamilton-Jacobi-Bellman equation on the set of probability measures}
As already mentioned, the stochastic optimal transport problem is a stochastic optimal control problem. Hence, naturally, the following study relies at some point on the study of the associated HJB equation. This partial differential equation (PDE) is set on the space of probability measures. We shall prove a comparison principle for viscosity sub and super solutions of this HJB equation. A particularity of the HJB equation associated to optimal transport problems, stochastic or not, is that it is associated with a singular boundary condition in time, namely because of the constraint that the target has to be reached. We provide an analysis of this singularity in Section \ref{sec:sing}. 

The study of HJB equations in infinite dimensional space is now the subject of a huge and rapidly growing literature. We will try to give an overview of the topic. Usually, the study of HJB equations relies mostly on the notion of viscosity solutions, introduced in finite dimensional spaces in Crandall and Lions \citep{crandall1983viscosity}. We refer to Crandall et al. \citep{crandall1992user} for a complete presentation of viscosity solutions in finite dimensional space. The study of this notion in cases modeling state constraints, and the singular behaviour they can produce, is largely due to Lasry and Lions \citep{lasry1989nonlinear} and Soner \citep{soner1986optimal,soner1986optimal2}.

The case of infinite dimensional equations is much more involved. First studies have been done on Hilbert spaces through the lens of viscosity solutions in Crandall and Lions \citep{crandall1985hamilton,crandall1986hamilton} for first order equations and in Lions \citep{lions1989viscosity} for second order problems, for instance. In the Hilbert case, it is easier to understand the structure of the super-differential of functions. This lead to numerous developments and we refer to Fabbri et al. \citep{fabbri2017stochastic} for a detailed study of second order HJB equations, namely on Hilbert spaces. 

In the 2000s, the study of HJB equations on metric spaces also gained interests and several developments were made. We can cite for instance \citep{feng2009,feng2012,feng2013,feng2014,nakayasu,giga2015,feng2021}, marked mainly by Feng and its co-authors. Motivations for these works seem to come from control problems or large deviations of infinite dimensional systems. In metric spaces, the HJB equation is often written with a Hamiltonian which depends on the local slopes of the functions. These techniques allowed to prove several comparison results. Developments in this direction are now advanced, in particular because of the different comparison principles established. These results rely quite often on the evaluation of the Hamiltonian on the squared metric to a given point. For recent developments on this topic, we refer to Liu et al. \citep{liu2021} for the links between various notions of viscosity solutions on metric spaces and to Conforti et al. \citep{conforti2021hamilton} in which the authors extend the techniques of Tataru for comparison principles to metric spaces, given that the HJB equation involves an evolutional variational inequality. Let us insist upon the fact that the main difficulty in the metric case is often the structure of the super-differential (or of the gradient of a function more simply).

Quite recently,  the study of HJB equations set on the space of probability measures, which is a metric space, has gained a lot of interest. This is namely because of its link with the study of potential Mean Field Games or mean field control.  An important case which we do not address in this paper, is when the PDE characterizing the evolution of the probability measure involves second order terms. In this setting, a highly singular first order term appears in the HJB equation. Note that previously mentioned works, such as \citep{conforti2021hamilton} for instance, are also concerned with such a case. In this setting, upon regularity estimates or a priori information, it is possible to establish a comparison principle , such as in Wu and Zhang \citep{wu2020}. However, such information is in general difficult to obtain. A useful approach has been to consider finite dimensional approximations of such equations and then pass to the limit, see for instance Cosso et al. \citep{cosso2021master} which approximates measures with combination of Dirac masses or Cecchin and Delarue \citep{cecchin2022weak} which uses Fourier decomposition. We also send the interested reader to \citep{soner2022} for an optimal control problem on the space of probability measures. 

A major step in the study of HJB equation on the set of probability measures is the so-called Hilbertian approach, or lifting, introduced by Lions in \citep{lions2007cours}. It is essentially the formulation of an equivalent HJB equation set on an Hilbert space. An approach, which we may call more intrinsic, was developed and used in Gangbo and Swiech \citep{gangbo2015existence}, Marigonda and Quincampoix \citep{marigonda2018mayer} and in Jimenez et al. \citep{jimenez2020optimal}. The links between Lions' Hilbertian approach and this more intrinsic approach is presented (among other things) in Gangbo and Tudorascu \citep{gangbo2019differentiability} and in Jimenez et al. \citep{jimenez2022dynamical}. Several authors have also considered methods relying on finite dimensional approximations of the PDE such as Gangbo et al. \citep{gangbo2021finite}, Mayorga and Swiech \citep{mayorga2023finite}.\\

In this manuscript, we adopt a novel approach to treat HJB equation on the set of probability measures. As we are only concerned with equations not modelling so-called i.i.d. noises, or in other words second order terms in the controlled PDE, we build on the fruitful Lions' Hilbertian approach. We show that this approach can be somehow carried on without using explicitly the Hilbert space. Our approach relies on a new notion of super-differential for functions of a probability measure argument. 

\subsection*{Organisation of the paper}The rest of the paper is organized as follows. In Section \ref{sec:sot}, we introduce the main problem at interest and derive the associated HJB equation. In Section \ref{sec:hjb}, we provide the definition of viscosity solutions we are going to use as well as general comparison principles. We then proceed to establish some estimates on the behaviour of the value of the problem near the time singularity in Section \ref{sec:sing}, providing the well-posedness of the value function. We then show in Section \ref{sec:cont} some continuity estimates on the value function of the problem and show why the value functions are indeed viscosity solutions of the associated HJB equations. We then summarize our analysis in Section \ref{sec:conclusion}.

\subsection*{Notation}
Let us now introduce some notation.
\begin{itemize}
\item The $d$-dimensional torus is denoted by $\T^d$. The set of probability measures on $\T^d$ (resp. $\R^d$) is $\mptd$ (resp. $\mathcal{P}(\R^d)$).
\item Consider a function $\phi : \mptd \to \R$. When it is defined, we note for $µ \in \mptd, x \in \T^d$
\be
\nabla_µ\phi(µ,x) = \lim_{\theta \to 0} \frac{\phi((1-\theta)µ + \theta \delta_x) - \phi(µ)}{\theta}.
\ee
\item We note, if it is defined, $D_{µ}\phi(µ,x) = \nabla_x\nabla_µ\phi(µ,x) \in \R^d.$ The second order derivatives are defined similarly.
\item The image measure of a measure $µ$ by a map $T$ is denoted by $T_{\#}µ$. 
\item The set of couplings between two measures $µ$ and $\nu$ is $\Pi(µ,\nu)$.
\item The notations usc and lsc stand for respectively upper semi continuous and lower semi continuous. The inf (resp. sup) enveloppe $U_*$ (resp. $U^*$) of a locally bounded function $U$ is defined by $U_*(x) = \liminf_{y \to x} U(y)$ (resp. $\limsup_{y \to x} U^*(y)$).
\item The law of a random variable $X$ is denoted by $\mathcal{L}(X)$.
\item Given a $n$-uple $x = (x_1,x_2,...,x_n)$, we denote by $\pi_k (x) = x_k$.
\item The set of $d\times d$ symmetric real matrices is denoted by $S_d(\R)$.
\end{itemize}

\section{From deterministic to stochastic optimal transport}\label{sec:sot}
We introduce here the main mathematical problems at interest in this paper, starting with the well-known case of optimal transport. 

\subsection{Optimal transport}
The problem of optimal transport consists in finding the best way (for a particular criteria) to transport a certain repartition of mass to another repartition of mass. We give a short presentation of this problem and refer to Villani \citep{villani2009optimal} and Santamborgio \citep{santambrogio2015optimal} for more details on this topic. Given $\mu$ and $\nu$ two probability measures on measurable sets $E_1$ and $E_2$, the main question of optimal transport is to find optimal measurable maps $T : E_1 \to E_2$ in the problem
\be\label{ot1}
\inf \{ \tilde{c}(T) | T_{\#}\mu = \nu\},
\ee
for a given real valued cost function $\tilde{c}$. Quite often, this cost is taken of the form
\[
\tilde c(T) = \int_{E_1} c(x,T(x))\mu(dx),
\]
where $c : E_1\times E_2 \to \R$. This problem lead to numerous mathematical developments since the seminal work of Monge. In the previous form, the problem has no minimizer in general. To observe this, it suffices to consider $µ$ a Dirac mass and $\nu$ the Lebesgue measure on some real interval. Indeed in this case the infimum is taken over an empty set. To address this issue, one usually introduces the relaxation of Kantorovich \citep{kantorovich1942translocation}. In this relaxed version, the typical form of the optimal transport problem becomes
\be\label{relaxedot}
\inf \int_{E_1\times E_2} c(x,y)\pi(dx,dy),
\ee
where the infimum is taken over all couplings $\pi$ between $µ$ and $\nu$, that is on probability measures on $E_1\times E_2$ such that for any measurable sets $A \subset E_1, B \subset E_2$, $\pi(A\times E_2) = µ(A)$ and $\pi(E_1\times B) = \nu(B)$.\\

Let us also mention the natural probabilistic interpretation of such a problem. Consider a probabilistic space $(\Omega, \mathcal{A},\mathbb{P})$. The previous relaxation of the optimal transport problem can be expressed as 
\be\label{probaot}
\inf_{(X,Y)}\mathbb{E}[c(X,Y)],
\ee
where the infimum is taken over all the couples $(X,Y)$ of random variables on $(\Omega, \mathcal{A},\mathbb{P})$ such that $\mathcal{L}(X) = µ$ and $\mathcal{L}(Y) = \nu$. Questions of existence, uniqueness and stability of optimal transport maps and optimal couplings have been extensively studied since. \\

In this paper, we are mostly interested in the case $E_1 = E_2 = \T^d$. In this case we note $\Pi(µ,\nu)$ the set of couplings $µ$ and $\nu$ in $\mptd$. When the cost $c$ is chosen as $c(x,y) = |x-y|^k$, the value of the optimal transport problem defines the Wasserstein distances through
\[
W_k(µ,\nu) = \left( \inf_{ \pi \in \Pi(µ,\nu)} \int_{\T^{2d}}|x-y|^k\pi(dx,dy) \right)^{\frac 1k}.
\]
The set of optimal couplings for the case $k = 2$ is denoted by $\Pi^{opt}(µ,\nu)$. 

One of the most useful approach for optimal transport problems has been the reformulation of \eqref{ot1} into
\be\label{benamoubrenier}
\inf_{(\alpha,m)} \int_0^1 \int_{\T^d}L(x, \alpha_t(x)) m_t(dx)dt,
\ee
where $L : \T^d \times \R^d \to \R$ is a certain cost function which is assumed to be bounded form below, and the infimum is taken over all pairs $(\alpha,m)$ such that $m : [0,1] \to \mathcal{P}(\T^d)$ is a continuous map, $\alpha : [0,1]\times \T^d \to \R^d$ is measurable and $(\alpha,m)$ satisfies in the weak sense
\[
\begin{aligned}
\partial_t m& + \text{div}(\alpha m) = 0 \text{ in } (0,1)\times \T^d, \\
&m(0) = \mu, m(1) = \nu.
\end{aligned}
\]
Let us insist on the fact that, in general, such a product $\alpha m$ is not well defined as a distribution, and thus the precise sense in which the previous equality holds has to be defined with care, which we postpone for the moment.

This approach is due to Benamou and Brenier \citep{benamou2000computational} and it allows to interpret the optimal transport problem as a dynamic optimal control problem, with a terminal state constraint, where the controlled state is a measure. As shown in \citep{benamou2000computational}, the optimality conditions of the problem \eqref{benamoubrenier} can be expressed through the following system of PDE
\[
\begin{aligned}\label{mfpp}
-\partial_t& u + H(x,\nabla_x u) = 0, \text{ in } (0,1)\times \T^d\\
\partial_t &m - \text{div}(D_pH(x,\nabla_x u)m) = 0 \text{ in } (0,1)\times \T^d,\\
m&(0) = \mu, m(1) = \nu,
\end{aligned}
\]
where $H(x,\cdot)$ is the Legendre transform of $L(x,\cdot)$, given by
\[
H(x,p) := \sup_{\alpha \in \R^d}\{-L(x,p) - \alpha\cdot p\}.
\]
 Let us remark that in this setting, the fact that the duration of the problem is $1$ does not play any sort of role except for fixing some constants. This last approach is similar to the use of Pontryagin's maximum principle in dynamic optimal control. 
 
 \subsection{Optimal transport through dynamic programming}
We give a more dynamical approach, \`a la Bellman, of the optimal transport problem. The first thing to be said is that in this approach, the time parameter is crucial. This is of course obvious since we are doing dynamic programming. We adopt the convention that the terminal time, i.e. the time at which the target measure has to be reached is $T > 0$.\\

Let us introduce, formally, the value function $U$ of the optimal transport problem, defined on $(0,T)\times \mptd^2$ by
\be\label{defu}
U(t,\mu,\nu) = \inf_{\alpha,m} \int_t^T \int_{\T^d}L( x,\alpha(s,x)) m_s(dx)ds,
\ee
where the infimum is taken over all $(\alpha,m)$ satisfying the same measurability condition as in \eqref{benamoubrenier} and such that, in the weak sense,
\[
\begin{aligned}\label{continuity}
\partial_s& m + \text{div}(\alpha m) = 0 \text{ in } (t,T)\times \T^d,\\
&m(t) = \mu, m(T) = \nu.
\end{aligned}
\]
It is very tempting to analyze such a value function by a dynamic programming approach and the associated HJB equation. The study of HJB equations is now an extensively studied topic and we refer to the introduction for related works. The expression of the dynamic programming principle usually takes the form, for $0 < \delta < T-t$
\be\label{ddp}
U(t,µ,\nu) = \inf_{\alpha,m} \left \{ \int_{t}^{t + \delta} \int_{\T^d}L(x,\alpha(s,x)) m_s(dx)ds + U(t+\delta,m_{t+\delta},\nu)\right\},
\ee
where the infimum is taken over the same set as in \eqref{defu} and $m_{t +\delta}$ is the value of $m$ at time $t +\delta$. To obtain the associated HJB equation, the usual method is to divide by $\delta $ and to let $\delta \to 0$ in \eqref{ddp}, under the assumption that $U$ is smooth. Doing so yields
\[
\begin{aligned}
0 =- \partial&_t U(t,µ,\nu)-\\
& - \lim_{\delta \to 0}\inf_{\alpha,m} \left\{ \frac{1}{\delta}\int_t^{t+\delta} \int_{\T^d}L(s, x,\alpha(s,x)) m_s(dx) +\int_{\T^d} D_µ U(t,µ,\nu,y)\cdot\alpha(s,y) m_s(dy) ds   \right\}.
\end{aligned}
\]
where we have used
\[
\begin{aligned}
\delta^{-1}(U(t + \delta, m_{t+\delta},\nu) &- U(t,µ,\nu)) = \delta^{-1}\bigg(U(t+\delta,m_{t+\delta},\nu) - U(t,m_{t+\delta},\nu)\\
& \quad + \int_{0}^1\int_{\T^d} \nabla_µ U(t,µ + \theta(m_{t+\delta} - µ),\nu,y) (m_{t+\delta} - µ)(dy)d\theta\bigg)\\
&= -\partial_t U(t,m_{t+\delta},\nu) + o(1)+ \\
&+ \delta^{-1}\int_{0}^1\int_t^{t+\delta}\int_{\T^d} D_µ U(t,µ + \theta(m_{t+\delta} - µ),\nu,y)\cdot\alpha(s,y) m_s(dy)dsd\theta\\
&= -\partial_tU(t,µ,\nu) + \delta^{-1}\int_t^{t+\delta}\int_{\T^d} D_µ U(t,µ,\nu,y)\cdot\alpha(s,y) m_s(dy)ds + o(1),
\end{aligned}
\]
and the fact that the $o(1)$ is uniform in $(\alpha,m)$ along minimizing sequences of the infimum. We do not insist too much on this assumption which is, in a lot of situations, immediate to verify given that $L$ grows sufficiently fast with the size of $\alpha$. Moreover, our aim is to derive the HJB equation, not particularly to consider smooth solutions of this PDE.\\

Recalling that $U$ is assumed to be smooth here, we finally arrive at the HJB equation

\be\label{hjb1}
-\partial_t U(t,\mu,\nu) + \mathcal{H}\left(\mu,D_µU\right) = 0 \text{ in } (0,T)\times \mptd^2,
\ee
where the Hamiltonian $\mathcal{H}:  \mptd \times (\T^d \to \R^d)\to \R$ is given by
\[
\mathcal{H}(µ,\xi) = \int_{\T^d}H(x,\xi(x))µ(dx).
\]
Note that in order for this Hamiltonian to be well defined, an integrability assumption has to be made on $x \to H(x,\xi(x))$ with respect to the measure $µ$.
\begin{Rem}
To be precise, we emphasize the fact that, a priori, the Hamiltonian $\mathcal{H}$ also depends on $\nu$ since the infimum is taken over all admissible controls. Indeed we have not yet proven that, given any pair $(\alpha,m)$ defined on the time interval $[0,\delta]$ we can construct an admissible pair on $[0,t]$ which coincides with $(\alpha, m)$ on $[0,\delta]$. This will be the case under a controllability assumption, namely that from any starting measure $µ$, we can always transport $µ$ toward $\nu$ in time $t$ in finite cost. This will be the case for most of the problem we are interested in but we shall give an example in which this assumption is not verified.
\end{Rem}
Moreover, because there is the state constraint at the terminal time $T$ that the state measure $µ$ has to be transported toward $\nu$, we expect that $U$ satisfies 
\[
U(T,\mu,\nu) = \begin{cases} 0 \text{ if } \mu = \nu, \\+ \infty \text{ otherwise}.\end{cases}
\]
This is always satisfied by the value function since, if $µ\ne \nu$, then the set of admissible controls is empty and thus the value infinite. However, as we shall see in Section \ref{sec:sing}, the behaviour of $U$ as $t\to T$ might be of a different nature, depending on the nature of the cost $L$.\\

Clearly, in this standard framework, $\nu$ is fixed and $U$ is simply a function of $t$ and $\mu$. In the next section, we shall see why the addition of what is only a parameter here, is helpful to model more general problems.\\

The approach of studying \eqref{hjb1} seems equivalent to \eqref{ot1}. However the situation is the same as in standard finite dimensional optimal control. For several deterministic problems, the use of Pontryagin's maximum principle is efficient to provide a complete mathematical analysis. But for a larger class of problems, it is more convenient to use the dynamic programming principle and the associated HJB equation, this is in particular true for the stochastic problems that we are going to introduce later on.\\

Moreover, as usual in dynamic programming, if one is given a smooth solution $U$ of \eqref{hjb1}, then a (smooth) closed-loop optimal control $\alpha^*$ in \eqref{defu} can be computed using the derivatives of $U$ by using the formula
\[
\alpha^*(t,µ,x) = -D_pH(x,D_µU(t,µ,x)) \text{ in } (0,T)\times \mptd \times \T^d.
\]

\subsection{Warning on the formulation of the HJB equation}
The formal computation which allowed us to derive \eqref{hjb1} holds under a smoothness assumption on the value function which does not hold in general.

Indeed, if it was the case, then consider the problem of optimal transport which starts at $µ = \delta_x$ for some $x \in \T^d$ when the time to reach $\nu$ is $t>0$. If $U$ is smooth, then an optimal control $\alpha$ is given as a smooth function of time and space. In particular, the induced trajectory, i.e. the unique solution of \eqref{continuity} will stay a Dirac mass at all time. Hence, as soon as the target measure $\nu$ is not a Dirac mass, we have a contradiction.

The PDE theory is used to derive the equations for smooth functions, and then provide weaker notion of solutions. However, we emphasize that the previous derivation might lead to a dangerous interpretation of the problem as it could lead to restrict the set of admissible controls. In our opinion, the analogy is very much in the spirit of Kantorovich's relaxation. If we restrict too much the set of admissible controls, we might be missing the only admissible controls. We shall come back later on this fact, as it bears some importance in the choice of the definition of viscosity solutions we are going to take. 

\begin{Ex}\label{ex:quadratic}
In the case of a quadratic cost of optimal transport, i.e. when $L(x,p) = \frac 12 |p|^2$, the associated HJB equation is given by
\be\label{hjbex}
-\partial_t U + \frac{1}{2} \int_{\T^d}|D_µU(t,µ,\nu,x)|^2µ(dx) = 0 \text{ in } (0,\infty)\times \mptd.
\ee
In this case, the value function $U$ is simply given by 
\[
U(t,µ,\nu) = \frac1{2(T-t)} W_2^2(µ,\nu).
\]
In particular, $U$ is not smooth, see for instance Alfonsi and Jourdain \cite{alfonsi}. We shall explain in Section \ref{sec:hjb} in which sense it is a viscosity solution of \eqref{hjbex}.
\end{Ex}

\subsection{Stochastic optimal transport}
This section introduces the main problem at interest in this paper, namely a stochastic version of \eqref{ot1}. In this general formulation of the optimal transport problem, it may seem unclear what to do if either the cost function or any of the measure is random. Hence, we focus on the formulation \eqref{benamoubrenier}. We work on a fixed filtered probability space $(\Omega, \mathcal{A}, \mathbb{P}, (\mathcal{F}_t)_{t \geq 0})$ which is assumed rich enough to contain independent Brownian motions.

We want to model the optimal transport of a given measure toward a stochastic target, in the time horizon $T > 0$. We assume here that the target measure is represented by an adapted Markovian stochastic process $(\nu_s)_{s \geq 0}$, valued in $\mptd$ and the (stochastic) target is given by $\nu_{T}$. The problem we want to model is the following: the controlled state is a measure, whose value at time $t$ shall be denoted $µ_t$. At time $t$, the trajectory $(\nu_s)_{s \in [0,t]}$ is known (obviously we do not know the future values of the target process, as this would put us in the usual framework). Then, we want to minimize a certain cost while transporting $µ$ toward $\nu_{T}$. 

As usual in stochastic optimal control, some assumptions have to be made on how the optimization problem takes into account the randomness. To simplify the following discussion, we assume that the problem is risk neutral, hence the problem at interest is given by
\be\label{sot}
\inf_{(\alpha,m)} \mathbb{E}\left[ \int_0^{T}\int_{\T^d}L(x,\alpha_s(x))m_s(dx)ds\right],
\ee
where $\alpha : \Omega\times[0,T]\times \T^d\to \R^d$ and $m:\Omega\times [0,T] \to \mptd$ have to be measurable maps which, almost surely in $\omega \in \Omega$, satisfy in the weak sense, the continuity equation 
\[
\begin{cases}
\partial_t m + \text{div}(\alpha m) = 0 \text{ in } (0,T)\times \T^d \\
m(0) = µ, m(T) = \nu_T.
\end{cases}
\]
together with the condition that they have to be adapted processes to the filtration $(\mathcal{F}_t)_{t \geq 0}$.\\

We now derive the associated HJB equation for different target processes $(\nu_t)_{t \geq 0}$.

\subsection{HJB equations of stochastic optimal transport}
 We define, formally for the moment, the value function $U$ by
\be\label{sotU}
U(t,µ,\nu) = \inf_{(\alpha,m)} \mathbb{E}\left[ \int_{t}^{T} \int_{\T^d}L(x,\alpha_s(x))m_s(dx)ds \bigg| \nu_{t} = \nu \right],
\ee
where the state process $(\alpha,m)$ has to satisfy the same requirement as previously except that the condition $m_0 = µ$ is now replaced by $m_t = µ$. Note that the expectation is conditioned on $\{\nu_{t} = \nu\}$. Another point of view consists in looking at $\nu$ as an uncontrolled state variable of the optimal control problem, that we try to attain at the final time with the controlled state variable.\\

Depending on the nature of the process $(\nu_s)_{s \geq 0}$, different HJB equations arise for the value $U$ in \eqref{sotU}. We now give a few examples of such equations.
\subsubsection{A constant target process}\hfill \\

 Observe briefly that in the simplest case in which the target process $(\nu_s)_{s \geq 0}$ is constant, we recover the usual optimal transport problem and the associated HJB equation is then \eqref{hjb1}.\\

\subsubsection{A Bernoulli like target process}\hfill \\

Consider a case in which at time $T /2$, the process $\nu$ is going to take the value $\nu_1$ with probability $p \in (0,1)$ and $\nu_2$ with probability $1-p$. It will then remain constant up to the final time $T$. In this context, after the time $T/2$, the problem is a standard (deterministic) optimal transport problem, whose value function we denote $U_{det}(t,µ,\nu)$. Because the problem is assumed to be risk neutral, we can compute the value of the problem at time $T /2$, just before the value of $\nu$ is revealed. It is simply given by
\be\label{condbernoulli}
U\left(\frac T 2,µ,\nu\right) = p U_{det}\left(\frac T2, µ, \nu_1\right) + (1-p)U_{det}\left(\frac T 2,µ,\nu_2\right),
\ee 
where $U_{det}$ is the value of the associated deterministic optimal transport problem. We can then compute the value $U$ for time $t \leq T/2$ by using the HJB equation
\[
- \partial_t U(t,\mu,\nu) + \mathcal{H}\left(\mu,D_µ U\right) = 0 \text{ in }  \left(0,\frac T 2\right) \times \mptd^2,
\]
together with the condition \eqref{condbernoulli}.
Let us remark that, in this setting, the value of the target process before $T/2$ does not matter.

 \subsubsection{A target process with jumps}\label{ex:jumps}\hfill \\
 
 Consider now a case in which the target process $(\nu_s)_{s \geq 0}$ jumps, at times $(s_n)_{n \geq 0}$ given by a Poisson process of intensity $\lambda : [0,T] \to \R_+$, from $\nu_{s_n}$ into $\mathcal{T}\nu_{s_n}$, where $\mathcal{T} : \mptd \to \mptd$ is a given operator. In such a situation, the associated HJB equation is given by
 \be\label{hjbl}
- \partial_t U(t,\mu,\nu) + \mathcal{H}\left(\mu,\nabla_µ U\right)  + \lambda(t) (U(t,µ,\nu) - U(t,µ,\mathcal{T}\nu))= 0 \text{ in } (0,T)\times \mptd^2.
 \ee

 Let us insist upon the fact that such type of target process can cover a wide range of models. For instance, if $\mathcal{T}$ is a constant operator, then the framework is quite close to the previous one and as at most two possible values of the target are possible, the initial one and its image by $\mathcal{T}$. Moreover, we could also consider cases involving more general jumps. For instance, assume that there is an independent sequence of times $(\tilde{s}_n)_{n \geq 0}$ given by a Poisson process of intensity $\lambda_2$, associated to an operator $\mathcal{T}_2$ such that the previous rule also applies but also the process jumps according to $\mathcal{T}_2$. In this case, the associated HJB equation would be
  \[
  \begin{aligned}
- \partial_t U(t,\mu,\nu) &+ \mathcal{H}\left(\mu,\nabla_µ U\right)  + \lambda(t) (U(t,µ,\nu) - U(t,µ,\mathcal{T}\nu))\\
 & + \lambda_2(t) (U(t,µ,\nu) - U(t,µ,\mathcal{T}_2\nu))= 0 \text{ in } (0,T)\times \mptd^2.
 \end{aligned}
 \]
 
 \subsubsection{The target process is pushed by a diffusion}\label{sec:mb}\hfill \\ 

 Consider now a case in which the target process $(\nu_s)_{s \geq 0}$ is given by $\nu_s = (\tau_{W_s})_{\#}\nu$ for a given $\nu \in \mptd$ and a process $(W_s)_{s \geq 0}$ (where $\tau_h(x) = x + h$ is the translation of $h$). We assume that $(W_s)_{s \geq 0}$ is given as the solution of the stochastic differential equation (SDE) 
 \be\label{sde}
 dW_t = \sigma(t)dB_t,
 \ee
 where $\sigma : \R_+ \to \R$ is a given function and $(B_t)_{t \geq 0}$ is a standard Brownian motion on $(\Omega, \mathcal{A},\mathbb{P},(\mathcal{F}_t)_{t \geq 0})$. In other words, the actual shape of the target measure is fixed by $\nu$, but it is constantly being translated by the process $(W_t)_{t \geq 0}$. Using an infinite dimensional analogue of It\^o's Lemma, such as in Cardaliaguet et al. \citep{cardaliaguet2019master} for instance, we deduce that the HJB equation characterizing the associated value $U$ is given by
 \be\label{hjbd2}
 \begin{aligned}
 -\partial_t U(t,\mu,\nu)& + \mathcal{H}\left(\mu,D_µ U\right) - \frac{\sigma^2(t)}{2}\int_{\T^{2d}}Tr\left[D^2_{\nu\nu}U\right]d\nu\otimes d\nu\\
 &-\frac{\sigma^2(t)}{2}\int_{\T^d}\text{div}_x(D_\nu U(t,µ,\nu,x))\nu(dx)  = 0 \text{ in } (0,T)\times \mptd^2. 
 \end{aligned}
 \ee
Contrary to the previous case, the present situation leads to terms involving derivatives of the value function with respect to the variable $\nu$ which represents the target measure. This is a general feature of such problems. In some particular situations, including this one as we shall see, the problem can be reduced in such a way that those terms do not appear, however in a general situation we cannot avoid to work directly with them.\\

 Let us remark that, formally, following the computations of Bertucci \citep{bertucci2021monotone2}, this HJB equation can be obtained as the limit of the case with jumps for well chosen operators $\mathcal{T}$ and jump rates $\lambda$.

 \subsubsection{The case of a stochastic cost functional}\hfill \\

 Consider now a slightly different setting. We now assume that the target process $(\nu_s)_{s \geq 0}$ is constant and that the randomness is carried in the cost function $L$. We assume that this randomness appears through a dependence on the value $w$ of a $d$ dimensional process $(W_s)_{s \geq 0}$ given as the solution of \eqref{sde}. In this situation, it is natural to consider a value function $U$ which also depends on the value $w$ of this process. To be more precise, we are considering the value $U$ defined by
 \be\label{sotUw}
U(t,µ,\nu,w) = \inf_{(\alpha,m)} \mathbb{E}\left[ \int_{t}^{T}\int_{\T^d}L(x,\alpha_s(x),W_s)m_s(dx)ds \bigg| W_{t} = w\right],
\ee
where the state process $(m_s)_{s \in [0,T]}$ evolves as in \eqref{sotU}, the infimum is carried over the same set.

 In this situation, the natural HJB equation satisfied by $U$ is
 \be\label{eqUnuw}
- \partial_t U(t,\mu,\nu,w) + \mathcal{H}\left(\mu,D_µ U,w\right) - \frac{\sigma^2(t)}{2}\Delta_w U= 0 \text{ in } (0,T)\times \mptd^2 \times \T^d.
 \ee

\subsubsection{Reduction of the case in which the target measure is pushed by a Brownian motion}\label{sec:reduw}
 
Equation \eqref{eqUnuw} leads us to the following simplification of the case in which the target measure is pushed by the process $(W_s)_{s \geq 0}$. Indeed, as we mentioned the shape of the final target is fixed at $\nu$ and thus only a finite dimensional parameter is sufficient to characterize it. More precisely, we want to make the formal change of variable
 \[
 U(t,µ,(\tau_{W_s})_{\#}\nu) = U(t,µ,W_s).
 \]
 This leads to the following HJB equation
  \be\label{hjbw}
 \begin{aligned}
- \partial_t U(t,\mu,w) + \mathcal{H}\left(\mu,D_µU\right) - \frac{\sigma^2(t)}{2}\Delta_w U = 0 \text{ in } (0,T)\times \mptd \times \T^d,
 \end{aligned}
 \ee
which is thus associated to the slightly more involved terminal condition
\[
U(T,µ,w) = \begin{cases} 0 \text{ if } µ = (\tau_w)_{\#}\nu,\\ + \infty \text{ else.}\end{cases}
\]

 \subsubsection{A comment on modelling}\hfill \\
 
Let us briefly comment on the choice we make to consider value functions as functions of both $µ$ and $ \nu$. There seemed to be a wide range of models for which keeping this distinction is not necessary: for instance if the cost function $L$ does not depend on the variable $x \in \T^d$. Indeed in this case, consider the equation \eqref{hjbl} and assume that $\mathcal{T}$ is a translation. Then studying \eqref{hjbl} is equivalent to studying
 \be\label{hjbl2}
 -\partial_t U(t,\mu) + \mathcal{H}\left(\mu,\nabla_µ U\right)  + \lambda(t) (U(t,µ) - U(t,\mathcal{T}^{-1}\mu))= 0 \text{ in } (0,T)\times \mptd.
 \ee
This could have also been observed on the case of Section \ref{sec:mb}, by considering the equation
 \[
 \begin{aligned}
- \partial_t U(t,\mu)& + \mathcal{H}\left(\mu,\nabla_µ U\right) - \frac{\sigma^2(t)}{2}\int_{\T^{2d}}Tr\left[D^2_{\mu\mu}U\right]dµ\otimes dµ\\
&-\frac{\sigma^2(t)}{2}\int_{\T^d}\text{div}_x(D_\mu U(t,µ,x))\mu(dx)  = 0 \text{ in } (0,T)\times \mptd. 
\end{aligned}
 \]
 
We believe that this type of simplification can be helpful in several cases. Although it seems that the intrinsic nature of the HJB equation associated to this stochastic optimal transport is by nature involving these two variables: a controlled one which yields a Hamiltonian, and an uncontrolled one which yields the term associated to the generator of the stochastic evolution of the target process.

 \section{A comparison principle for HJB equations on the set of probability measures}\label{sec:hjb}
 As we mentioned in the introduction, the aim of this paper is to study the stochastic optimal transport problem by means of the associated HJB equation. Two main mathematical difficulties arise in this approach. The first one consists in studying the HJB equation in itself while the second one lies in the characterization of the singular terminal condition. In this section, we focus on the first question and postpone the question of the terminal condition to the Section \ref{sec:sing}.
 
Here, we establish a general comparison principle for HJB equations on $\mptd$. We analyze first a pure HJB equation and we then explain how to extend it to HJB equations associated to stochastic optimal transport problems as the ones we mentioned before. This study is set on more general Hamiltonians than the one we have introduced before.

The notion of viscosity solution we introduce is different from the one usually used the literature. We believe that the present approach is better suited to study a wide range of problems. Furthermore, we justify this notion in the next section when defining rigorously the value functions.
\ca
 
 \subsection{Super-differentials of functions on $\mptd$}
Before presenting our notion of viscosity solution, we have to define a notion of super/sub-differential of functions on $\mptd$. Even though it is not the notion we are going to use, we start by recalling a common definition of super-differential.\\

In the literature, it is said that a function $\xi \in L^{1}((\T^d,µ),\R^d)$ belongs to the super-differential of $U : \mptd \to \R$ at $µ \in \mptd$ if for any $µ' \in \mptd$, $\pi \in \Pi^{opt}(µ,µ')$,\footnote{Recall that $\Pi^{opt}(µ,µ')$ is the set of optimal couplings between $µ$ and $µ'$ for the quadratic cost.}
\be\label{supclass}
U(µ') \leq U(µ) + \int_{\T^d\times \T^d}\xi(x)\cdot(y-x)\pi(dx,dy) + o(W_2(µ,µ')).
\ee
In such a situation we note $\xi \in \partial_{clas}^+U(µ)$. The sub-differential $\partial_{clas}^-U(µ)$ is defined as $\partial_{clas}^-U(µ)= -\partial_{clas}^+(-U)(µ)$. When $U$ is a smooth function, one recovers easily that $\partial_{clas}^+U(µ) = \partial_{clas}^-U(µ)= \{D_µU(µ)\}$.
\begin{Rem}
Let us insist on the fact that this notion of smoothness views $(\mptd,W_2)$ as a geometric space whose geodesics are the ones of the optimal transport with cost $c(x,y) = |x-y|^2$. Indeed, in \eqref{supclass}, we are considering optimal couplings between $µ$ and $µ'$. Looking at $\mptd$ as a flat space, would lead to consider super-differentials $\partial_{flat}^+U(µ)$ as the set of $\phi \in \mathcal{C}^0(\T^d,\R)$ such that for all $µ' \in \mptd$.
\be\label{eq:789}
U(µ') \leq U(µ) + \int_{\T^d}\phi(x)(µ'-µ)(dx) + o(W_2(µ,µ')).
\ee
In this case, we would have $\partial_{flat}^+U(µ) = \{\nabla_µ U(µ)\}$ for smooth functions $U$.
\end{Rem}
In this article we generalize the previous notion of super-differential in the spirit of Kantorovich's relaxation of \eqref{relaxedot}. The simplest way to proceed is to replace $\xi : \T^d \to \R^d$ by a map $\psi : \T^d \to \mprd$. Equivalently, we can consider a measure $\gamma \in \mathcal{P}(\T^d\times \R^d)$ whose first marginal is $(\pi_1)_{\#}\gamma = µ$ where $µ$ is the measure at which we are looking for an element of the super-differential. As in \eqref{supclass}, when considering the variations of a function $U$ between $µ'$ and $µ$ we have to consider a coupling between the two measures, and not only the difference as in \eqref{eq:789} for instance. We are not particularly interested in geodesics here so we shall not ask for the coupling to be optimal. Hence, we are lead to consider the following Definition.

\begin{Def}\label{def:sup}
Given an upper semi continuous function $U : \mptd \to \R$, we say that a measurable map $\psi : \T^d \to \mprd$ is in the super-differential of $U$ at the point $µ$ if
\begin{itemize}
\item There exists $C > 0$ such that, for all $x \in \T^d$, the support of $\psi(x,\cdot)$ is contained in $B(0_{\R^d},C)$.
\item For any $µ' \in \mptd$, for any $\gamma \in\Pi(µ,µ')$, the following holds
\be\label{eqdefsup}
U(µ') - U(µ) \leq \int_{\T^{2d}}\int_{\R^d}z\cdot(y-x)\psi(x,dz)\gamma(dx,dy) + o\left( \left(\int_{\T^{2d}}|x-y|^2\gamma(dx,dy)\right)^{\frac 12}  \right).
\ee
\end{itemize}
In this case we note $\psi \in \partial^+U(µ)$.
\end{Def}

\begin{Rem}
\begin{itemize}
\item The condition on the boundedness of the support of $\psi$ is too strong at the level of this definition and could have been replaced by $(x \to \int_{\R^d}z\psi(x,dz)) \in L^1(µ)$ so that \eqref{eqdefsup} still makes sense. However, since we are working on the bounded set $\T^d$, this condition will not be too restrictive for the rest of the analysis. Moreover, it will greatly help with the definition of the HJB equation on the elements of the super-differential. Hence we leave it here for convenience.
\item The inequality \eqref{eqdefsup} is a priori only carrying information when the term in the $o(\cdot)$ vanishes as $W_2(µ,µ') \to 0$.
\item If  $\psi \in \partial^+U(µ)$, then $x\to \int_{\R^d}z\psi(x,dz) \in \partial^+_{clas}U(µ)$.
\item If $\xi \in \partial^+_{clas}U(µ)$ then $x \to \delta_{\xi(x)} \in \partial^+U(µ)$.
\end{itemize}
\end{Rem}

For a lower semi continuous function $U$, we define $\partial^-U(µ) = \{ x \to (-Id)_{\#}\psi(x,\cdot) | \psi \in \partial^+U(µ)\}$.\\

We now provide what we believe to be an instructive computation around this notion of super-differentiability. This computation is based on Lions' Hilbertian approach. Consider a smooth function $\Phi: \mptd \to \R$, $X$ and $Y$ two $\T^d$ valued random variables on a standard probability space $(\Omega', A', \mathbb{P}')$, such that $\mathcal{L}(X) = µ$ and $\mathcal{L}(Y) = µ'$. We then want to evaluate the variations of $\Phi$ along the path $(m_s)_{s \in [0,1]}$ defined by $m_s = \mathcal{L}(X + s(Y - X))$.
\[
\begin{aligned}
\Phi(m_t) - \Phi(µ) &= \int_0^t\mathbb{E}_{\mathbb{P}'}[D_µ\Phi(m_s,X + s(Y-X)) \cdot (Y- X)]ds \\
&= \int_0^t\mathbb{E}_{\mathbb{P}'}[D_µ\Phi(µ,X ) \cdot (Y- X)]ds + o\left(\mathbb{E}_{\mathbb{P}'}[|Y-X|]\right),
\end{aligned}
\]
where $\mathbb{E}_{\mathbb{P}'}$ denotes the expectation on $(\Omega', A', \mathbb{P}')$. Because $\Phi$ is smooth, it defines a smooth mapping on the $\T^d$ valued random variables $\tilde{\Phi}:X \to \Phi(\mathcal{L}(X))$. On this last computation we see that the derivative $D_µ \Phi$ is linked to the gradient of $\tilde{\Phi}$. Because $\Phi$ is smooth, the gradient of $\tilde{\Phi}$ at $X$ is in fact of the form $\xi(X)$ for some map $\xi : \T^d \to \R^d$. The notion of super-differential we provided consists in looking for random variables in the super-differential of $\tilde{\Phi}$ without any particular restriction whereas in the super-differential $\partial^+_{clas}$, the random variables in the super-differential have to be a function of $X$.\\

The following Proposition states the super-differentiability of the squared Wasserstein distance. For the usual notion, such a result was already proved in Ambrosio and Gangbo \citep{ambrosio2008hamiltonian}, Proposition 4.3.
\begin{Prop}\label{prop:super}
For any $\mu,\nu \in \mptd$, $\gamma^o \in \Pi^{opt}(µ,\nu)$, consider the measurable map $\psi$ defined almost everywhere by the disintegration $(\pi_1,\pi_1 -\pi_2)_{\#}\gamma^o = µ(dx)\psi(x,dz)$. The function $ \Phi : µ' \to \frac{1}{2}W^2_2(µ',\nu)$ is such that $\psi \in \partial^+\Phi(µ)$.
\end{Prop}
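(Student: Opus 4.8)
The plan is to verify directly the two requirements of Definition \ref{def:sup}. The boundedness of the support is immediate from compactness: since $z=x-w$ is a displacement between two points of $\mathbb{T}^d$, we represent it by a lift of minimal norm in $\R^d$, so that for $\mu$-a.e. $x$ the measure $\psi(x,\cdot)$ is supported in the ball centred at $0_{\R^d}$ of radius equal to the diameter of $\mathbb{T}^d$, uniformly in $x$; this yields the constant $C$. The substance of the proof is then the inequality \eqref{eqdefsup}, which I intend to obtain as an \emph{exact} identity up to an error coming solely from the non-optimality of the test coupling.

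Fix $\mu'\in\mptd$ and an arbitrary $\gamma\in\Pi(\mu,\mu')$. The key device is to build a competitor coupling between $\mu'$ and $\nu$ by gluing $\gamma$ and $\gamma^o$ along their common marginal $\mu$. Disintegrating $\gamma(dx,dy)=\mu(dx)\gamma_x(dy)$ and $\gamma^o(dx,dw)=\mu(dx)\gamma^o_x(dw)$, I would introduce the conditionally independent measure $\Gamma(dx,dy,dw)=\mu(dx)\gamma_x(dy)\gamma^o_x(dw)$ on $\mathbb{T}^{3d}$, whose $(x,y)$-marginal is $\gamma$ and whose $(x,w)$-marginal is $\gamma^o$. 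Its $(y,w)$-marginal is then a (generally non-optimal) coupling of $\mu'$ and $\nu$, so that $\tfrac12 W_2^2(\mu',\nu)\le \tfrac12\int_{\mathbb{T}^{3d}}|y-w|^2\,\Gamma(dx,dy,dw)$.

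The computation then reduces to the elementary expansion $|y-w|^2=|y-x|^2+2(y-x)\cdot(x-w)+|x-w|^2$, all differences being read as minimal-norm lifts relative to a common representative $\hat x$ of $x$; for such lifts one has $d_{\mathbb{T}^d}(y,w)\le|y-w|$, so the inequality of the previous paragraph is preserved in the favourable direction. Integrating against $\Gamma$ term by term, the last term gives $\tfrac12\int|x-w|^2\gamma^o=\tfrac12 W_2^2(\mu,\nu)=\Phi(\mu)$; the cross term, using the conditional independence of $y$ and $w$ given $x$ together with the defining relation $\int_{\R^d}z\,\psi(x,dz)=\int_{\T^d}(x-w)\gamma^o_x(dw)$, equals exactly $\int_{\mathbb{T}^{2d}}\int_{\R^d}z\cdot(y-x)\psi(x,dz)\gamma(dx,dy)$; and the first term is $\tfrac12\int_{\mathbb{T}^{2d}}|x-y|^2\gamma(dx,dy)$. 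This produces
\[
\Phi(\mu')-\Phi(\mu)\le \int_{\mathbb{T}^{2d}}\int_{\R^d}z\cdot(y-x)\psi(x,dz)\gamma(dx,dy)+\tfrac12\int_{\mathbb{T}^{2d}}|x-y|^2\gamma(dx,dy),
\]
and since $\tfrac12\int|x-y|^2\gamma=o\big((\int|x-y|^2\gamma)^{1/2}\big)$ as this quantity tends to $0$, the right-hand side is of the form required by \eqref{eqdefsup}.

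I do not expect a genuine analytic obstacle, as the argument is an exact identity perturbed by a controlled quadratic remainder. The two points requiring care are the measurability of the disintegrations $\gamma_x$ and $\gamma^o_x$, which is standard on the compact metric space $\mathbb{T}^d$, and the consistent choice of minimal lifts on the torus that makes the Euclidean expansion legitimate while keeping $\tfrac12 W_2^2(\mu',\nu)\le\tfrac12\int|y-w|^2\Gamma$ valid; this is the step I would treat most carefully. Finally, I would emphasise that the measure-valued formulation is what makes the statement genuinely more general than its classical counterpart: when $\gamma^o$ is induced by an optimal map $T$, the kernel $\psi(x,\cdot)=\delta_{x-T(x)}$ is Dirac and one recovers the result of Ambrosio and Gangbo, whereas the gluing argument above also covers cases where no optimal map exists, such as $\mu=\delta_x$.
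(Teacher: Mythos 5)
Your proof is correct and follows essentially the same route as the paper's: both glue $\gamma$ and $\gamma^o$ along their common marginal $\mu$ into a conditionally independent measure, use its $(\mu',\nu)$-marginal as a competitor coupling to bound $W_2^2(\mu',\nu)$, and expand the square so that the optimal-coupling term cancels, the cross term yields the $\psi$-integral, and the quadratic remainder $\tfrac12\int_{\T^{2d}}|x-y|^2\gamma(dx,dy)$ is absorbed into the $o(\cdot)$. Your additional care with torus lifts and the support bound only makes explicit what the paper leaves implicit.
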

\begin{proof}
Consider an arbitrary $\gamma \in \Pi(µ,µ')$ and its disintegration $\gamma(dx,dy) = µ(dx)k(x,dy)$. Recall that  $\gamma^o \in \Pi^{opt}(µ,\nu)$. Hence, $\int_{\T^d}\gamma^o(dx,dy) k(x,dz)$ is an admissible coupling between $µ'$ and $\nu$ (where the previous integral is taken with respect to $x$ only). Thus, by definition of $\Phi$
\[
\begin{aligned}
2\Phi(µ') -2 \Phi(µ) &\leq \int_{\T^{3d}} |y-z|^2\gamma^o(dx,dy)k(x,dz) - \int_{\T^{2d}} |x-z|^2\gamma^o(dx,dy)\\
&= \int_{\T^{3d}}|y-x + x -z|^2 - |x-z|^2\gamma^o(dx,dy)k(x,dz) \\
& = 2\int_{\T^{3d}} (y-x)\cdot (x-z) \gamma^o(dx,dy)k(x,dz) + \int_{\T^{2d}}|y-x|^2 \gamma(dx,dz).
\end{aligned}
\]
From which the result follows. Remark in particular that since $\T^d$ is bounded, the bound on the support is indeed verified.
\end{proof}
This result of everywhere super-differentiability justifies the use of the $2$-Wasserstein distance in the argument of doubling of variables that we are going to make afterwards to obtain a comparison principle.
\begin{Rem}
The previous result can be interpreted in the probabilistic or Hilbertian approach. It states that, given an optimal coupling $(X,Y)$ for the quadratic cost between $µ$ and $\nu$, we can consider an element of the super-differential of $\Phi$ which is constructed on the random variable $X-Y$ and not just on $X- \mathbb{E}_{\mathbb{P}'}[Y|X]$.
\end{Rem}

Another advantage of this definition of super-differential is that it makes more transparent the link with the so-called Hilbertian approach. Let $(\Omega', \mathcal{A}',\mathbb{P}')$ be an atomeless standard probabilistic space and $K$ be the set of $\T^d$ valued random variables from this space. We have the following result.
\begin{Prop}\label{prop:equiv}
Consider an usc function $U : \mptd \to \R$. Define $\tilde{U} : K \to \R$ by $\tilde{U}(X) = U(\mathcal{L}(X))$. Take $X \in K$ and assume that $Z \in \K^2(\Omega,\R^d)$ is such that for all $Y \in K$,
\[
\tilde{U}(Y) \leq \tilde{U}(X) + \mathbb{E}_{\mathbb{P}'}[Z\cdot (Y-X)] + o\left(\sqrt{\mathbb{E}_{\mathbb{P}'}[|X-Y|^2]}\right).
\]
Consider now a function $\psi : \T^d \to \mprd$ such that 
\[
\mathcal{L}(X,Z)(dx,dz) = \mathcal{L}(X)(dx)\psi(x,dz).
\]
Then, $\psi \in \partial^+U(\mathcal{L}(X))$.
\end{Prop}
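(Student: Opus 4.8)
The plan is to verify directly that $\psi$ meets the two requirements of Definition \ref{def:sup} at $\mu:=\mathcal{L}(X)$. The guiding idea is that the hypothesis constrains \emph{all} competitors $Y$ living on the same space as $X$, whereas \eqref{eqdefsup} quantifies over \emph{all} couplings $\gamma$ of $\mu$ with an arbitrary $\mu'$; the bridge is to realise each such $\gamma$ as the joint law of $(X,Y)$ for a suitable $Y$, chosen so that the correction $\mathbb{E}[Z\cdot(Y-X)]$ coming from the hypothesis matches the linear term in \eqref{eqdefsup}. The boundedness requirement is the mild point: since $Z\in\K^2(\Omega,\R^d)$, the map $x\mapsto\int_{\R^d} z\,\psi(x,dz)=\mathbb{E}[Z\mid X=x]$ lies in $L^2(\mu)\subset L^1(\mu)$, so in the relaxed form of the first bullet discussed in the Remark following Definition \ref{def:sup} it holds and the right-hand side of \eqref{eqdefsup} is well defined.

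Next I would fix $\mu'\in\mptd$ and $\gamma\in\Pi(\mu,\mu')$ and disintegrate it as $\gamma(dx,dy)=\mu(dx)k(x,dy)$. The key construction is to produce $Y\in K$ with $\mathcal{L}(X,Y)=\gamma$ and such that $Y$ is conditionally independent of $Z$ given $X$. Concretely, using that $(\Omega,\mathcal{A},\mathbb{P})$ is atomless and standard, I would introduce a uniform random variable $V$ independent of $(X,Z)$ together with a measurable map $G$ satisfying $G(x,\cdot)_{\#}\mathrm{Unif}[0,1]=k(x,\cdot)$, and set $Y=G(X,V)$. By construction $\mathcal{L}(X,Y)=\gamma$, hence $\mathcal{L}(Y)=\mu'$, while $Y$ depends on $(X,Z)$ only through $X$ and the independent variable $V$, which yields the conditional independence of $Y$ and $Z$ given $X$.

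I would then compute the linear term by conditioning on $X$. Conditional independence gives
\be
\mathbb{E}[Z\cdot(Y-X)]=\int_{\T^d}\Big(\int_{\R^d} z\,\psi(x,dz)\Big)\cdot\Big(\int_{\T^d}(y-x)k(x,dy)\Big)\mu(dx)=\int_{\T^{2d}}\!\int_{\R^d} z\cdot(y-x)\,\psi(x,dz)\gamma(dx,dy),
\ee
which is precisely the first term on the right of \eqref{eqdefsup}. Applying the hypothesis to this $Y$, and using $\tilde{U}(Y)=U(\mathcal{L}(Y))=U(\mu')$, $\tilde{U}(X)=U(\mu)$ and $\mathbb{E}[|X-Y|^2]=\int_{\T^{2d}}|x-y|^2\gamma(dx,dy)$, delivers \eqref{eqdefsup} with no further work, completing the verification that $\psi\in\partial^+U(\mu)$.

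The step I expect to be delicate is the randomisation used to build $Y$: one needs enough randomness independent of the already-given pair $(X,Z)$ to realise an \emph{arbitrary} kernel $k(x,dy)$ while keeping $Y$ conditionally independent of $Z$. This is exactly where atomlessness (and the implicit richness) of the probability space is essential, since if $(X,Z)$ generated the whole $\sigma$-algebra the construction would fail. The cleanest route is therefore either to assume that $\Omega$ carries such an independent uniform, or to enlarge the space and transfer the inequality back, which is legitimate because every quantity involved depends only on the joint law of $(X,Z,Y)$.
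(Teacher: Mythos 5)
Your core construction is the right one, and it is in fact the same coupling structure the paper uses: a triple $(X,Z,Y)$ with $\mathcal{L}(X,Y)=\gamma$ and $Y$ conditionally independent of $Z$ given $X$, i.e.\ with joint law $\mu(dx)\psi(x,dz)k(x,dy)$, is exactly what makes the linear term $\mathbb{E}[Z\cdot(Y-X)]$ collapse to the integral in \eqref{eqdefsup}, and your conditioning computation is correct. The genuine gap is in how this triple is realized. The hypothesis of the Proposition quantifies only over $Y\in K$, that is, random variables defined on the \emph{given} space $(\Omega,\mathcal{A},\mathbb{P})$, tested against the \emph{given} pair $(X,Z)$. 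On an atomless standard space there need not exist any uniform variable $V$ independent of $(X,Z)$: take $\Omega=[0,1]$ with Lebesgue measure and $X$ the image of $\omega$ under a Borel isomorphism onto $\T^d$, so that $(X,Z)$ generates $\mathcal{A}$ up to null sets; then every $Y\in K$ is almost surely a function of $(X,Z)$, and a nondegenerate kernel $k(x,dy)$ is simply not realizable as $\mathcal{L}(X,Y)$ for any $Y\in K$. So the randomization step can genuinely fail on $\Omega$ itself, as you suspected.

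The proposed repair --- enlarge the space and ``transfer the inequality back, which is legitimate because every quantity involved depends only on the joint law of $(X,Z,Y)$'' --- is where the argument breaks. It is true that $U(\mathcal{L}(Y))$, $\mathbb{E}[Z\cdot(Y-X)]$ and $\mathbb{E}[|X-Y|^2]$ are functions of $\mathcal{L}(X,Z,Y)$ alone, but the hypothesis is not a statement about all joint laws: it only gives the inequality for those laws realizable with $Y$ defined on $\Omega$ and the fixed $(X,Z)$, and the whole difficulty is that the law $\mu(dx)\psi(x,dz)k(x,dy)$, built on the enlargement, may not be realizable in this way. The transfer is thus exactly the nontrivial step, not a formality. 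The paper closes this gap by an approximation argument: by classical results on atomless standard spaces, there exists a sequence $(X_n,Y_n,Z_n)$ defined on $\Omega$ whose joint law is exactly $\mu(dx)\psi(x,dz)k(x,dy)$ and such that $\|(X_n,Z_n)-(X,Z)\|_{\infty}\le n^{-1}$. One then applies the hypothesis to $Y_n\in K$ (now legitimate), notes that $\mathcal{L}(Y_n)=\mu'$ for every $n$ so the left-hand side does not move along the sequence, and passes to the limit, replacing $\mathbb{E}[Z\cdot(Y_n-X)]$ by $\mathbb{E}[Z_n\cdot(Y_n-X_n)]$, which equals the integral in \eqref{eqdefsup} exactly. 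Your proof becomes complete if the enlargement-and-transfer paragraph is replaced by this approximation step (or by adding a richness assumption on $\Omega$ relative to $(X,Z)$, which the Proposition does not make).
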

\begin{proof}
Set $µ =\mathcal{L}(X)$. Take $\gamma \in \Pi(µ,µ')$ and desintegrate $\gamma(dx,dy)$ into $µ(dx)k(x,dy)$. Consider $\psi: \T^d\to \mprd$ such that $\mathcal{L}(X,Z)(dx,dz) = µ(dx)\psi(x,dz)$. Consider now $(X',Y',Z')$ such that $\mathcal{L}((X',Y',Z'))(dx,dy,dz)= µ(dx)\psi(x,dz)k(x,dy)$. Thanks to classical results, we can in fact consider a sequence $(X_n,Y_n,Z_n)_{n \geq 0}$ such that $\mathcal{L}((X_n,Y_n,Z_n))(dx,dy,dz)= µ(dx)\psi(x,dz)k(x,dy)$ and $\|(X_n,Z_n) -(X,Z)\|_{\infty} \leq n^{-1}$. It holds that
\be\label{eq:548}
U(µ') = \tilde{U}(Y_n) \leq U(µ) + \mathbb{E}_{\mathbb{P}'}[Z\cdot(Y_n-X)] + o\left(\sqrt{\mathbb{E}_{\mathbb{P}'}[|X-Y_n|^2]}\right).
\ee
On the other hand, 
\[
\begin{cases}
|\mathbb{E}_{\mathbb{P}'}[Z\cdot(Y_n-X)] - \mathbb{E}_{\mathbb{P}'}[Z_n\cdot(Y_n - X_n)] | \underset{n \to \infty}{\longrightarrow} 0,\\
\mathbb{E}_{\mathbb{P}'}[|X-Y_n|^2]  \underset{n \to \infty}{\longrightarrow} \int_{\T^{2d}}|x-y|^2\gamma(dx,dy). 
\end{cases}
\]
and 
\[
\mathbb{E}_{\mathbb{P}'}[Z_n\cdot(Y_n - X_n)] = \int_{\T^{2d}\times \R^d}z\cdot(y-x)\psi(x,dz)\gamma(dx,dy).
\]
Hence the result follows from passing to the limit $n \to \infty$ in \eqref{eq:548}.
\end{proof}
\begin{Rem}
In other words, we have elements of the super-differential of $U$ which describe all the elements of the super-differential of $\tilde{U}$.
\end{Rem}

\cred



 
 \ca

We end this section by explaining on a simple example how to consider super-differential of functions of more variables than a measure $µ \in \mptd$. For instance, consider the case of an additional time variable. Given $T > 0$ and an usc function $U : [0,T]\times \mptd \to \R$, we note $(\theta,\psi) \in \partial^+U(t,µ)$ if 
\[
\begin{aligned}
U(s,µ') \leq &U(t,µ) + \theta(s-t) + \int_{\T^{2d}}\int_{\R^d}z\cdot(y-x)\psi(x,dz)\gamma(dx,dy)+\\
& + o\left(|s-t|+\left( \int_{\T^{2d}}|x-y|^2\gamma(dx,dy)\right)^{\frac 12}  \right),
\end{aligned}
\]
for any $s \leq t, µ' \in \mptd, \gamma \in \Pi(µ,µ')$. Similarly we introduce $\partial^-U(t,µ) = \{(-\theta,x\to (-Id)_{\#}\psi(x,\cdot)) | (\theta,\psi) \in \partial^+(-U)(t,µ)\}$.

We are now able to define viscosity solutions of \eqref{hjb}.

 \subsection{Statement of the problem and definition of viscosity solutions}
 In this section we want to prove a comparison principle for HJB equations of the form
  \be\label{hjb}
 \partial_t U(t,\mu) + \mathcal{H}\left(t,\mu,D_µ U\right) = 0 \text{ in } (0,\infty)\times \mptd,
 \ee
where $\mathcal{H}: \R_+ \times \mptd \times (\T^d \to \R^d) \to \R$ is given by
\[
\mathcal{H}(t,µ,\xi) = \int_{\T^d}H(t,x,µ,\xi(x))µ(dx),
\]
where $H: \R_+ \times \T^d \times \mptd \times \R^d$ is given. In all this section, we shall assume that the following hypothesis holds.

\begin{hyp}\label{hyp:hgen}
The Hamiltonian $H$ satisfies
\begin{itemize}
\item $H$ is globally continuous.
\item There exists $C >0$ such that for all $p \in \R^d, s,t \in \R_+, µ,\nu \in \mptd, x,y \in \R^d$ we have
\[
|H(t,x,µ,p) - H(t,y,\nu,p) | \leq C(1+ |p|) (|t-s| + W_2(µ,\nu) + |x-y|).
\]
\end{itemize}
\end{hyp}
Let us insist upon the fact that, to simplify this section, we have reversed the sense of time compared to the HJB equations derived in Section \ref{sec:sot}.\\

As mentioned earlier, the expected candidate to be a solution of \eqref{hjb} is in general not smooth and we have to define a notion of weak solution. The natural techniques to study HJB equations such as \eqref{hjb} comes from the theory of viscosity solutions. The approach we provide here is somehow close to the one of Marigonda and Quincampoix \citep{marigonda2018mayer} in the sense that we provide an intrinsic proof of a comparison principle, and it is also close to the point of view of Lions' Hilbertian approach, also presented by Gangbo and Tudorascu \citep{gangbo2019differentiability}, in the sense that the notion of viscosity solution we are going to provide relies on ideas from this Hilbertian lifting. However, our result is more general than the ones of  Marigonda and Quincampoix \citep{marigonda2018mayer} and Gangbo and Tudorascu \citep{gangbo2019differentiability} because of the generality of equations we are able to treat. Moreover, we believe the proof we provide to be simpler.\\

In our definition of super-differential, an element of the super differential (with respect to the measure variable) is a map $\T^d \to \mprd$ and not simply a function $\T^d \to \R^d$. Thus, we have to specify how we want to evaluate $\mathcal{H}$ on such elements. We introduce here $\bar{\mathcal{H}} :\R_+ \times \mptd \times (\T^d \to \mprd)\to \R$ defined by
\[
\bar{\mathcal{H}}(t,µ,\psi) = \int_{\T^d\times \R^d}H(t,x,µ,y)µ(dx)\psi(x,dy),
\]
and we shall work with the following Definition.
\begin{Def}\label{def:visc}
An usc (resp. lsc) function $U : \R_+ \times \mptd \to \R$ is a viscosity sub (resp. super)-solution of \eqref{hjb} if, for any $t > 0, µ \in \mptd$ and $(\theta,\psi) \in \partial^+(U)(t,µ)$ (resp. $\partial^-U(t,µ)$) the following holds
\[
\theta + \bar{\mathcal{H}}(t,µ,\psi) \leq 0 \quad (\text{ resp. } \geq 0).
\]
 A viscosity solution of \eqref{hjb} is a locally bounded function such that $U^*$ is a viscosity sub-solution and $U_*$ is a viscosity super-solution.
\end{Def}
\begin{Rem}
The term $\bar{\mathcal{H}}(t,µ,\psi)$ is well defined because by definition of the super/sub-differential, $\psi(x,dz)$ has bounded support in $z\in\R^d$, uniformly in $x\in \T^d$.
\end{Rem}
\begin{Rem}
The choice we made to consider $\bar{\mathcal{H}}$ is not trivial. It shall be justified in the Section \ref{sec:cont} when proving that the value function of the stochastic optimal transport is indeed a viscosity solution of the HJB equation.
\end{Rem}

The main advantage of this notion of viscosity solutions, by comparison with other ones in the literature, is that it provides, relatively easily, a comparison principle, as we shall now see.

\subsection{Comparison principle and uniqueness of viscosity solutions}
As usual in the theory of viscosity solutions, uniqueness of solutions, $L^{\infty}$ estimates and other stability properties come from a comparison principle. We now establish such a result. 
\begin{Theorem}\label{thm:compgen}
Under Hypothesis \ref{hyp:hgen}, assume that $U$ and $V$ are respectively viscosity sub and super-solution of \eqref{hjb} such that for all $µ \in \mptd$, $U(0,µ) \leq V(0,µ)$. Then for all $t \geq 0, µ \in \mptd$, $U(t,µ) \leq V(t,µ)$.
\end{Theorem}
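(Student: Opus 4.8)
The plan is to argue by contradiction through a doubling-of-variables argument adapted to the Wasserstein geometry, in which the penalization is precisely the squared distance $W_2^2$, whose \emph{everywhere} super-differentiability is furnished by Proposition~\ref{prop:super}. Suppose the conclusion of Theorem~\ref{thm:compgen} fails; then, fixing a horizon $\bar T$, one has $M := \sup_{t\le \bar T,\ \mu}(U(t,\mu)-V(t,\mu)) > 0$, and this supremum is finite and attained because $\mptd$ is $W_2$-compact and the relevant semicontinuity of $U$ and $V$ makes $U-V$ upper semicontinuous. To create a strict inequality and to confine the maximizer to positive times, I would introduce, for small $\lambda,\varepsilon>0$, the doubled functional
\[
\Phi_\varepsilon(t,s,\mu,\nu) = U(t,\mu) - V(s,\nu) - \frac{1}{2\varepsilon}W_2^2(\mu,\nu) - \frac{1}{2\varepsilon}|t-s|^2 - \lambda t ,
\]
doubling both the time and the measure variables, and let $(t_\varepsilon,s_\varepsilon,\mu_\varepsilon,\nu_\varepsilon)$ be a maximizer over $[0,\bar T]^2\times\mptd^2$.

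The first routine block is the classical penalization analysis: as $\varepsilon\to 0$, $\Phi_\varepsilon$ at its maximum tends to $M_\lambda := \sup(U-V-\lambda t)$, which is positive for $\lambda$ small, while $\tfrac1\varepsilon\big(W_2^2(\mu_\varepsilon,\nu_\varepsilon)+|t_\varepsilon-s_\varepsilon|^2\big)\to 0$. Together with the hypothesis $U(0,\cdot)\le V(0,\cdot)$ and the semicontinuity of $U,V$, this forces $t_\varepsilon,s_\varepsilon$ to stay bounded away from $0$ for $\varepsilon$ small, and a standard barrier perturbation of the form $\pm\eta/(\bar T - t)$ keeps them away from $\bar T$, so both lie in $(0,\bar T)$ where the equation is active.

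The heart of the argument is the extraction of admissible test elements. Freezing $(s_\varepsilon,\nu_\varepsilon)$, the map $(t,\mu)\mapsto U(t,\mu)$ is touched from above at $(t_\varepsilon,\mu_\varepsilon)$ by the (time-smooth) $W_2^2$-penalization; since at a maximum the super-differential of the test function is a super-differential of $U$, Proposition~\ref{prop:super} applied to $\mu\mapsto\tfrac1{2\varepsilon}W_2^2(\mu,\nu_\varepsilon)$ yields $(\theta_U,\psi_\varepsilon)\in\partial^+U(t_\varepsilon,\mu_\varepsilon)$ with $\theta_U=\tfrac1\varepsilon(t_\varepsilon-s_\varepsilon)+\lambda$ and with $\psi_\varepsilon$ carried by the displacement of an optimal coupling $\gamma^o_\varepsilon\in\Pi^{opt}(\mu_\varepsilon,\nu_\varepsilon)$, i.e. concentrated on the momentum $z=(x-y)/\varepsilon$. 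Symmetrically, freezing $(t_\varepsilon,\mu_\varepsilon)$ and using $\partial^-V=\{(-Id)_{\#}\psi:\psi\in\partial^+(-V)\}$ gives $(\theta_V,\phi_\varepsilon)\in\partial^-V(s_\varepsilon,\nu_\varepsilon)$ with $\theta_V=\tfrac1\varepsilon(t_\varepsilon-s_\varepsilon)$ and $\phi_\varepsilon$ built from the \emph{same} coupling $\gamma^o_\varepsilon$, again concentrated on $z=(x-y)/\varepsilon$. The decisive point, and the very reason the relaxed super-differential of Definition~\ref{def:sup} is used rather than the classical $\partial^+_{clas}$, is that both test momenta can be taken equal to the common displacement field $(x-y)/\varepsilon$; this matching is exactly the everywhere super-differentiability supplied by Proposition~\ref{prop:super} and is unavailable if the element is forced to be a function $\T^d\to\R^d$ of $x$ alone.

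Feeding these into the two viscosity inequalities, $\theta_U+\bar{\mathcal H}(t_\varepsilon,\mu_\varepsilon,\psi_\varepsilon)\le 0$ for the sub-solution and $\theta_V+\bar{\mathcal H}(s_\varepsilon,\nu_\varepsilon,\phi_\varepsilon)\ge 0$ for the super-solution, the identical slopes $\tfrac1\varepsilon(t_\varepsilon-s_\varepsilon)$ cancel on subtraction and leave
\[
\lambda \;\le\; \int_{\T^{2d}}\Big[ H\big(s_\varepsilon,y,\nu_\varepsilon,\tfrac{x-y}{\varepsilon}\big) - H\big(t_\varepsilon,x,\mu_\varepsilon,\tfrac{x-y}{\varepsilon}\big)\Big]\,\gamma^o_\varepsilon(dx,dy).
\]
Because the momentum $(x-y)/\varepsilon$ is the same in both evaluations, the assumed $(1+|p|)$-Lipschitz bound on $H$ controls the integrand by $C\big(1+\tfrac{|x-y|}\varepsilon\big)\big(|t_\varepsilon-s_\varepsilon|+W_2(\mu_\varepsilon,\nu_\varepsilon)+|x-y|\big)$; using $\int|x-y|^2\gamma^o_\varepsilon=W_2^2(\mu_\varepsilon,\nu_\varepsilon)$, Cauchy--Schwarz and an AM--GM split, every term reduces to $\tfrac1\varepsilon W_2^2(\mu_\varepsilon,\nu_\varepsilon)$, $\tfrac1\varepsilon|t_\varepsilon-s_\varepsilon|^2$, or a vanishing lower-order contribution, so the right-hand side tends to $0$. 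This gives $\lambda\le 0$, a contradiction, and hence $U\le V$. I expect the genuine obstacle to be the extraction step of the third paragraph: one must check carefully that the penalization's super-differential transfers to $U$ (and its sub-differential to $V$) in the exact sense of \eqref{eqdefsup}, and above all that the two Hamiltonian terms share a single displacement field from one optimal coupling $\gamma^o_\varepsilon$. It is precisely this matching, combined with the tailored growth of the Lipschitz constant in $|p|$, that makes all error terms collapse; the existence of the maximizer, the $\lambda$- and barrier-perturbations, and the confinement away from $t=0$ are standard.
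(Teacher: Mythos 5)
Your proposal is correct and follows essentially the same route as the paper's proof: doubling of variables with the $W_2^2$ penalization, transferring the penalization's super-differential to $U$ (and sub-differential to $V$) at the extremum via Proposition \ref{prop:super} and Lemma \ref{lemma:inc}, matching both test momenta to the displacement field $(x-y)/\epsilon$ of a single optimal coupling $\gamma^o_\epsilon$ so that the slopes cancel and the Hamiltonian difference is controlled by the $(1+|p|)$-Lipschitz assumption, then concluding with the standard penalization asymptotics and the $t=0$ boundary case via semicontinuity and $U(0,\cdot)\le V(0,\cdot)$. The only cosmetic difference is your barrier $\eta/(\bar T-t)$, which the paper does not need because its super-differentials are one-sided in time (only $s\le t$ is tested), so the viscosity inequalities remain available at the right endpoint of the time interval.
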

Following the standard ways to establish comparison principle, we are going to use the so-called technique of doubling of variables. We now present formally this technique. The proof of the Theorem is postponed to the end of this section. In this setting on $\mptd$, we introduce, for $\epsilon > 0$, the function
\[
(t,s,µ,µ') \to V(s,µ') - U(t,µ) + \frac{1}{2\epsilon}((t-s)^2 + W_2^2(µ,µ')),
\]
Arguing by contradiction, we will consider a point $(t^*,s^*,µ^*,\nu^*)$ of minimum of this function. Then, using Proposition \ref{prop:super}, we will prove that $\partial^+U(t^*,\mu^*)$ and $\partial^-V(s^*,\nu^*)$ are non empty. More precisely, we will be able to consider two elements, one in each of those sets, with some relation between them.\\

The next, final and main step of the proof consists in arriving at a contradiction by taking the difference of the viscosity relations, i.e. the relations given by the fact that $U$ is a sub-solution and $V$ a super-solution. Before presenting the proof of the final step, we prove the Lemma that we are going to use in order to consider elements of $\partial^+U(t^*,\mu^*)$ and $\partial^-V(s^*,\nu^*)$.
\begin{Lemma}\label{lemma:inc}
Consider an usc function $U$ and a continuous function $\Phi$ on $\R_+\times \mptd$ and $(t,µ) \in (0,\infty)\times \mptd$, a point of maximum of $U - \Phi$.\\ Then $(\theta,\psi)\in \partial^+\Phi(t,µ) \Rightarrow (\theta,\psi)\in \partial^+U(t,µ)$.
\end{Lemma}
\begin{proof}
Take $t>0,$ $µ\in \mptd$ such that $(t,µ) \in \text{argmax} \{U - \phi\}$ and also $(\theta,\psi) \in \partial^+\Phi(t,µ)$. For any $s\leq t, µ' \in \mptd$ and $\gamma \in \Pi(µ,µ')$,
\[
\ba
U(s,µ') - U(t,µ) &\leq \Phi(s,µ') - \Phi(t,µ)\\
& \leq \theta(s-t) + \int_{\T^{2d}}\int_{\R^d}z\cdot(y-x)\psi(x,dz)\gamma(dx,dy)\\
&\quad \quad + o\left(|s-t| + \left(\int_{\T^{2d}}|x-y|^2\gamma(dx,dy)\right)^{\frac 12}  \right)
\ea
\]
\end{proof}
We are now ready to prove the main result of this section.
\begin{proof}\textit{(Of Theorem \ref{thm:compgen}.)}
Assume that the result does not hold. Hence, there exists $T, \kappa > 0$ such that
\[
\inf_{t\leq T,µ\in \mptd} V(t,µ) - U(t,µ) \leq -\kappa.
\]
Thus, there exists $\rho> 0$ such that for any $\epsilon > 0$,
\be\label{eqpropeps}
\inf_{t,s\leq T,µ,µ'} V(s,µ') - U(t,µ) + \frac{1}{2\epsilon}\bigg((t-s)^2 + W_2^2(µ,µ')\bigg) + \rho(t+s) \leq -\frac{\kappa}{2}.
\ee
Since $U$ is usc and $V$ lsc, the previous infimum is reached at some point $(t_{\epsilon},s_{\epsilon},µ_{\epsilon},µ'_{\epsilon})$, because we are minimizing a lsc function on the compact $[0,T]^2\times \mptd^2$.\\

\textit{Step 1: using the viscosity properties.}
We treat first the case $t_{\epsilon},s_{\epsilon} > 0$. Take $\gamma_{\epsilon}^o \in \Pi^{opt}(µ_{\epsilon},µ'_{\epsilon})$ and denote $\psi_{\epsilon} : \T^d \to \mprd$ such that 
\[
(\pi_1,\epsilon^{-1}(\pi_1 - \pi_2))_{\#}\gamma_{\epsilon}^o(dx,dz) = µ_{\epsilon}(dx)\psi_{\epsilon}(x,dz).
\]
From Proposition \ref{prop:super} and Lemma \ref{lemma:inc}, we obtain that
\[
\begin{cases}
\left(\rho + \epsilon^{-1}(t_{\epsilon}-s_{\epsilon}), \psi_{\epsilon}\right) \in \partial^+U(t_{\epsilon},µ_{\epsilon}),\\
(-\rho - \epsilon^{-1}(s_{\epsilon}-t_{\epsilon}), \psi_{\epsilon}) \in \partial^-V(s_{\epsilon},µ'_{\epsilon}).
\end{cases}
\]
Since $U$ and $V$ are respectively sub and super-viscosity solution of \eqref{hjb}, we deduce that
\[
\ba
\rho + \epsilon^{-1}(t_{\epsilon} - s_{\epsilon})+ \int_{\T^{2d}}H\left(t_{\epsilon},x,µ_{\epsilon},\frac{x-y}{\epsilon}\right)\gamma^o_{\epsilon}(dx,dy) \leq 0,
\ea
\]
and that
\[
\ba
-\rho - \epsilon^{-1}(s_{\epsilon} - t_{\epsilon})+ \int_{\T^{2d}}H\left(s_{\epsilon},y,µ'_{\epsilon},\frac{x-y}{\epsilon}\right)\gamma^o_{\epsilon}(dx,dy) \geq 0.
\ea
\]

\textit{Step 2: Standard estimates.}\\
Taking the differences of the two previous inequalities leads to
\[
2 \rho \leq  \int_{\T^{2d} }H(s_{\epsilon},y,µ'_{\epsilon},\epsilon^{-1}(x-y))\gamma_\epsilon^o(dx,dy) - \int_{\T^{2d} }H(t_{\epsilon},x,µ_{\epsilon},\epsilon^{-1}(x-y))\gamma_{\epsilon}^o(dx,dy).
\]
Using the regularity assumptions we made on $H$, we deduce that
\[
\ba
2 \rho& \leq C\int_{\T^{2d} } (|t_{\epsilon}-s_{\epsilon}| + |x -y| + W_2(µ_{\epsilon} ,µ'_{\epsilon}))(1 + \eps^{-1}|x-y|)\gamma_{\epsilon}^o(dx,dy)\\
&\leq C\bigg( \eps^{-1}W_2^2(µ_{\epsilon},µ'_{\epsilon}) + (|t_{\epsilon} -s_{\epsilon}| + W_2(µ_{\epsilon},µ'_{\epsilon}))\eps^{-1}\int_{\T^d}|x-y|\gamma^o_{\epsilon}(dx,dy) \bigg).
\ea
\]
From standard estimates techniques of the method of doubling of variables, see for instance Crandall et al. \citep{crandall1992user}, we know that
\[
\frac{(t_{\epsilon} - s_{\epsilon})^2}{\epsilon} \longrightarrow 0, \frac{W_2^2(µ_{\epsilon},µ'_{\epsilon})}{\epsilon} \longrightarrow 0, \text{ as } \epsilon \to 0.
\]
Hence if the previous holds for all $\epsilon > 0$, we arrive at a contradiction by taking the limit $\epsilon \to 0$ since $\rho > 0$ was fixed independently of $\epsilon$.\\

\textit{Step 3: The minimum is at the boundary $t = 0$.}\\
From the previous Step, we deduce that, for $\epsilon$ small enough, the minimum is in fact reached at a point such that either $s_{\epsilon} = 0$ or $t_{\epsilon} = 0$. Since, we obtain from \eqref{eqpropeps} that $\lim_{\epsilon \to 0} \epsilon^{-1}(t_{\epsilon}-s_{\epsilon})^2 = \lim_{\epsilon \to 0} \epsilon^{-1}W^2_2(µ_{\epsilon},µ'_{\epsilon}) = 0$, we deduce that $\lim_{\epsilon \to 0}t_{\epsilon} = \lim_{\epsilon \to 0} s_{\epsilon} = 0$. Moreover, extracting a subsequence if necessary, there exists $µ_0$, limit of both $(µ_{\epsilon})_{\epsilon > 0}$ and $(µ'_{\epsilon})_{\epsilon > 0}$. Hence, using the lower semi continuity of $V$ and the upper semi continuity of $U$, we deduce that
\[
\begin{aligned}
V(0,µ_0) - U(0,µ_0) &\leq \liminf_{\epsilon\to 0} V(t_{\epsilon},µ_{\epsilon}) - U(s_{\epsilon},µ'_{\epsilon})  - \frac{1}{2\epsilon}((t_{\epsilon}-s_{\epsilon})^2 + W_2^2(µ_{\epsilon},µ'_{\epsilon})) - \alpha(t_{\epsilon}+s_{\epsilon})\\
&\leq - \frac{\kappa}{2}.
\end{aligned}
\]
The previous being clearly a contradiction, we finally deduce that the Theorem is true.
\end{proof}
 \begin{Rem}
 In the previous, we omitted to treat the particular case in which the minimum is reached for either $t_\eps$ or $s_\eps$ equal to $T$. This raises no difficulty. It can be treated by either adding a term of the form $\eps'(T-t)^{-1}$ for some $\eps' \in \R$ and then letting $\eps' \to 0$ or either by simply changing the notion of super (or sub)-differential in the $t$ variable, so that only perturbations with smaller time are taken into account.
 \end{Rem}

From the comparison principle easily follows the following result of uniqueness.
\begin{Theorem}
Under Hypothesis \ref{hyp:hgen}, given a continuous initial condition $U_0$, there exists at most one viscosity solution $U$ of \eqref{hjb} such that for all $µ \in \mptd$, $U^*(0,µ) = U_0(µ) = U_*(0,µ)$.
\end{Theorem}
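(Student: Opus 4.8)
The plan is to deduce this uniqueness statement directly from the comparison principle of Theorem \ref{thm:compgen}, applied twice in a symmetric fashion. Suppose $U$ and $V$ are two viscosity solutions of \eqref{hjb} which both satisfy the prescribed initial condition, that is $U^*(0,\mu) = U_*(0,\mu) = U_0(\mu) = V^*(0,\mu) = V_*(0,\mu)$ for every $\mu \in \mptd$. By Definition \ref{def:visc}, being a viscosity solution means precisely that $U^*$ and $V^*$ are viscosity sub-solutions while $U_*$ and $V_*$ are viscosity super-solutions; these semicontinuous envelopes are therefore exactly the objects to which Theorem \ref{thm:compgen} applies.

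First I would apply Theorem \ref{thm:compgen} with the sub-solution $U^*$ and the super-solution $V_*$. At the initial time the ordering hypothesis is met, since $U^*(0,\mu) = U_0(\mu) = V_*(0,\mu)$ for all $\mu$, hence in particular $U^*(0,\mu) \leq V_*(0,\mu)$. The comparison principle then yields $U^*(t,\mu) \leq V_*(t,\mu)$ for all $(t,\mu) \in \R_+ \times \mptd$. Exchanging the roles of $U$ and $V$, the same argument with sub-solution $V^*$ and super-solution $U_*$ gives $V^*(t,\mu) \leq U_*(t,\mu)$ everywhere.

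It then remains to chain these inequalities using the elementary bounds $U_* \leq U^*$ and $V_* \leq V^*$ valid for any locally bounded function. Combining them gives
\be
U^* \leq V_* \leq V^* \leq U_* \leq U^*
\ee
on all of $\R_+ \times \mptd$, which forces the four envelopes to coincide. Hence $U^* = U_* = V^* = V_*$, so that $U$ and $V$ are in fact continuous and equal, which proves that there is at most one viscosity solution with the given initial condition.

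I do not expect any genuine obstacle here: the statement is a standard corollary of the comparison principle, and the only points requiring care are purely bookkeeping, namely correctly identifying which semicontinuous envelope plays the role of sub- and of super-solution in each of the two applications of Theorem \ref{thm:compgen}, and noting that the common initial datum $U_0$ turns the ordering hypothesis at $t=0$ into an equality. The continuity of $U_0$ enters only through the fact that it is the common value of both envelopes at time $0$, so that the hypotheses of the comparison principle are legitimately available.
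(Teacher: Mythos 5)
Your proposal is correct and takes essentially the same approach as the paper: the paper's own proof is a one-line application of Theorem \ref{thm:compgen} in both directions, yielding $V \leq U \leq V$, and your argument is simply a more careful spelling-out of that same double application, with the bookkeeping of which envelope ($U^*$, $U_*$, $V^*$, $V_*$) plays the sub- and super-solution role made explicit.
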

\begin{proof}
By considering two such solutions $U$ and $V$, using the comparison principle, we immediately arrive at the fact that $V \leq U \leq V$ which proves the claim.
\end{proof}
More generally, we can use the comparison principle to establish stability results or $L^{\infty}$ estimates. For instance, the following is an immediate corollary of Theorem \ref{thm:compgen}.
\begin{Cor}
Under Hypothesis \ref{hyp:hgen}, consider a viscosity sub-solution $U$ and a viscosity super-solution $V$ of \eqref{hjb}, then $t \to \max_µ\{U(t,µ) - V(t,µ)\}$ is non increasing.
\end{Cor}

\subsection{Extensions to other HJB equations}
We now explain how to make use of the previous results, or more precisely of their proofs, to study the equations \eqref{hjbl} and \eqref{hjbw}.

\subsubsection{The case of jumps}
We focus here on \eqref{hjbl}. Formally, it suffices to remark that the terms in $\lambda$ in \eqref{hjbl} do not involve derivatives of the solution and thus are quite easy to treat. Moreover, the fact that the functions depend here on two measures instead of one does not perturb the previous argument as we shall now see. The definition of viscosity solutions of \eqref{hjbl} takes the following form 

\begin{Def}\label{def:viscl}
An usc (resp. lsc) function $U: [0,T] \times \mptd^2\to \R$ is said to be a viscosity sub-solution (resp. super-solution) of \eqref{hjbl} if for any $t\in[0,T), µ,\nu \in \mptd$ and $(\theta,\psi,\psi') \in \partial^+U(t,µ,\nu)$ (resp. $\in \partial^-U(t,µ,\nu)$)
\[
 -\theta +  \bar{\mathcal{H}}(t,µ,\psi)+ \lambda(t)(U(t,µ,\nu) - U(t,µ,\mathcal{T}\nu)) \leq 0 \text{ (resp. } \geq 0 ). 
\]
A viscosity solution of \eqref{hjbl} is a locally bounded function $U$ such that $U_*$ is a viscosity super-solution and $U^*$ is a viscosity sub-solution.
\end{Def}
\begin{Rem}
Of course the existence of an element in the super-differential in the $\nu$ variable is useless here, and could be removed.
\end{Rem}
As in the previous case, a comparison principle can be stated.
 \begin{Prop}\label{prop:compl}
Under Hypothesis \ref{hyp:hgen}, assume that $U$ and $V$ are respectively viscosity sub and super-solutions of \eqref{hjbl} such that for all $µ,\nu$, $U(T,µ,\nu) \leq V(T,µ,\nu)$ and such that they are both bounded functions. Assume also that $\lambda$ is a continuous non negative function. Then for all time $t \in [0,T]$ and measures $µ,\nu \in \mptd$, $U(t,µ,\nu) \leq V(t,µ,\nu)$.
\end{Prop}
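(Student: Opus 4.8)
The plan is to replay the doubling-of-variables argument of Theorem \ref{thm:compgen}, adapting it to the two new ingredients of \eqref{hjbl}: the presence of a second measure $\nu$ and the nonlocal jump term. The guiding observation is that the Hamiltonian in \eqref{hjbl} differentiates only in the controlled variable $\mu$ and that, as noted in the Remark after Definition \ref{def:viscl}, the $\nu$-component of the super/sub-differential never enters the viscosity inequalities. I would therefore \emph{not} double the variable $\nu$, but keep it as a single, unpenalized variable shared by $U$ and $V$. Arguing by contradiction, suppose $\inf_{t,\mu,\nu}(V-U)\leq -\kappa<0$; then for $\rho>0$ small enough one still has, for every $\epsilon>0$,
\be
\inf_{t,s \leq T,\ \mu,\mu',\nu} V(s,\mu',\nu) - U(t,\mu,\nu) + \frac{1}{2\epsilon}\big((t-s)^2 + W_2^2(\mu,\mu')\big) + \rho(2T - t - s) \leq -\frac{\kappa}{2}.
\ee
Since $U$ is usc, $V$ is lsc and $\mptd$ is compact, this infimum is attained at a point $(t_\epsilon,s_\epsilon,\mu_\epsilon,\mu'_\epsilon,\nu_\epsilon)$; the term $\rho(2T-t-s)$ orients the minimization toward the terminal time $T$, where the data are prescribed, the sense of time in \eqref{hjbl} being reversed with respect to \eqref{hjb}.

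As in Step 1 of Theorem \ref{thm:compgen}, at an interior minimizer (that is, when $t_\epsilon,s_\epsilon<T$) I would take $\gamma_\epsilon^o\in\Pi^{opt}(\mu_\epsilon,\mu'_\epsilon)$ and build the momentum $\psi_\epsilon$ by disintegrating $(\pi_1,\epsilon^{-1}(\pi_1-\pi_2))_\#\gamma_\epsilon^o$. Since the penalty does not depend on $\nu$ and $\nu_\epsilon$ maximizes $\nu\mapsto U(t_\epsilon,\mu_\epsilon,\nu)-V(s_\epsilon,\mu'_\epsilon,\nu)$, Proposition \ref{prop:super} together with the analogue of Lemma \ref{lemma:inc} for functions of $(t,\mu,\nu)$ yield
\be
\ba
\big(\epsilon^{-1}(t_\epsilon - s_\epsilon) - \rho,\ \psi_\epsilon, \psi'_\epsilon\big) &\in \partial^+ U(t_\epsilon,\mu_\epsilon,\nu_\epsilon),\\
\big(\epsilon^{-1}(t_\epsilon - s_\epsilon) + \rho,\ \psi_\epsilon, \psi'_\epsilon\big) &\in \partial^- V(s_\epsilon,\mu'_\epsilon,\nu_\epsilon),
\ea
\ee
the $\nu$-components $\psi'_\epsilon$ being irrelevant. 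Writing the sub- and super-solution inequalities of Definition \ref{def:viscl} at these two points---both carrying the \emph{same} $\nu_\epsilon$---and subtracting them makes the time-penalty terms contribute a fixed $+2\rho$, so that
\be
2\rho + J_\epsilon \leq \bar{\mathcal{H}}(s_\epsilon,\mu'_\epsilon,\psi_\epsilon) - \bar{\mathcal{H}}(t_\epsilon,\mu_\epsilon,\psi_\epsilon),
\ee
where $J_\epsilon := \lambda(t_\epsilon)\big(U(t_\epsilon,\mu_\epsilon,\nu_\epsilon) - U(t_\epsilon,\mu_\epsilon,\mathcal{T}\nu_\epsilon)\big) - \lambda(s_\epsilon)\big(V(s_\epsilon,\mu'_\epsilon,\nu_\epsilon) - V(s_\epsilon,\mu'_\epsilon,\mathcal{T}\nu_\epsilon)\big)$. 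The right-hand side tends to $0$ by the Lipschitz assumptions on $H$ and the standard estimate $\epsilon^{-1}W_2^2(\mu_\epsilon,\mu'_\epsilon)\to 0$, exactly as in Step 2 of Theorem \ref{thm:compgen}.

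The main obstacle, and the only genuinely new point, is to show that the nonlocal term cannot spoil this contradiction, i.e. that $J_\epsilon\geq -o(1)$; this is where keeping $\nu$ undoubled is decisive. Adding and subtracting $\lambda(t_\epsilon)\big(V(s_\epsilon,\mu'_\epsilon,\nu_\epsilon) - V(s_\epsilon,\mu'_\epsilon,\mathcal{T}\nu_\epsilon)\big)$ splits
\be
\ba
J_\epsilon = &\ \lambda(t_\epsilon)\Big\{\big[U(t_\epsilon,\mu_\epsilon,\nu_\epsilon) - V(s_\epsilon,\mu'_\epsilon,\nu_\epsilon)\big] - \big[U(t_\epsilon,\mu_\epsilon,\mathcal{T}\nu_\epsilon) - V(s_\epsilon,\mu'_\epsilon,\mathcal{T}\nu_\epsilon)\big]\Big\}\\
&\ + \big(\lambda(t_\epsilon) - \lambda(s_\epsilon)\big)\big(V(s_\epsilon,\mu'_\epsilon,\nu_\epsilon) - V(s_\epsilon,\mu'_\epsilon,\mathcal{T}\nu_\epsilon)\big).
\ea
\ee
Since $\nu_\epsilon$ maximizes $\nu\mapsto U(t_\epsilon,\mu_\epsilon,\nu)-V(s_\epsilon,\mu'_\epsilon,\nu)$ over all of $\mptd$---and $\mathcal{T}\nu_\epsilon$ is merely one admissible competitor, which requires \emph{no} regularity of $\mathcal{T}$---the brace is non-negative, and $\lambda(t_\epsilon)\geq 0$ makes the first term non-negative. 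The second term is $o(1)$: $V$ is bounded while $|\lambda(t_\epsilon)-\lambda(s_\epsilon)|\to 0$ by uniform continuity of $\lambda$ on $[0,T]$, using that $|t_\epsilon-s_\epsilon|\to 0$. Hence $J_\epsilon\geq -o(1)$ and the penultimate display forces $2\rho\leq o(1)$, a contradiction for $\epsilon$ small. It then remains to treat the case where the minimizer sits on the terminal boundary $t_\epsilon=T$ or $s_\epsilon=T$; this is handled exactly as in Step 3 of Theorem \ref{thm:compgen}, the penalization forcing $t_\epsilon,s_\epsilon\to T$ and $\mu_\epsilon,\mu'_\epsilon$ (and, up to a subsequence, $\nu_\epsilon$) to common limits, whereupon the terminal hypothesis $U(T,\cdot,\cdot)\leq V(T,\cdot,\cdot)$ and the semicontinuity of $U,V$ contradict the bound $-\kappa/2$.
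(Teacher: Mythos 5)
Your handling of the jump term is elegant and would indeed require no hypothesis on $\mathcal{T}$: testing the maximality of $\nu \mapsto U(t_\epsilon,\mu_\epsilon,\nu) - V(s_\epsilon,\mu'_\epsilon,\nu)$ against the competitor $\mathcal{T}\nu_\epsilon$ makes your brace non-negative at no cost. The genuine gap sits one step earlier, exactly where you invoke the viscosity inequalities. Definition \ref{def:viscl} only constrains $U$ and $V$ through elements $(\theta,\psi,\psi')$ of the \emph{joint} super/sub-differential in the three variables $(t,\mu,\nu)$. Because your penalization contains no term in $\nu$, the minimality of your functional gives, in the $\nu$-direction, only the information
\be
U(t_\epsilon,\mu_\epsilon,\nu) - U(t_\epsilon,\mu_\epsilon,\nu_\epsilon) \leq V(s_\epsilon,\mu'_\epsilon,\nu) - V(s_\epsilon,\mu'_\epsilon,\nu_\epsilon) \quad \text{for all } \nu \in \mptd,
\ee
and $V$ is merely lsc: it carries no super-differential structure in $\nu$. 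So the ``analogue of Lemma \ref{lemma:inc} for functions of $(t,\mu,\nu)$'' does not apply --- that lemma transfers super-differential elements of a \emph{test function} $\Phi$ to $U$ at a maximum of $U-\Phi$, and here there is no such test function in the $\nu$-direction. Nothing guarantees the existence of any $\psi'_\epsilon$ completing $(\epsilon^{-1}(t_\epsilon-s_\epsilon)-\rho,\psi_\epsilon)$ into an element of $\partial^+U(t_\epsilon,\mu_\epsilon,\nu_\epsilon)$: for an usc function the joint super-differential can perfectly well be empty at such a point (in finite dimension, think of $(x,y)\mapsto g(x)+|y|$ at $y=0$, whose partial super-differential in $x$ is nonempty while the joint one is empty). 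The same objection applies to the claimed element of $\partial^-V(s_\epsilon,\mu'_\epsilon,\nu_\epsilon)$. Consequently the inequalities of Definition \ref{def:viscl} cannot be invoked and the proof stalls. What your argument actually establishes is a comparison principle for a more restrictive notion of solution, tested on partial super-differentials with $\nu$ frozen; that is a logically weaker statement than the Proposition, and it would not combine with Section 4 of the paper, where the value function is only shown to be a solution in the joint sense.

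This is precisely why the paper's proof doubles the variable $\nu$ as well, penalizing by $\frac{1}{2\epsilon}W_2^2(\nu,\nu')$. That penalty is everywhere super-differentiable (Proposition \ref{prop:super}), so Lemma \ref{lemma:inc} produces bona fide joint elements of $\partial^+U$ and $\partial^-V$, whose $\nu$-components are then simply discarded in the PDE inequality. The price is that the jump terms no longer cancel by your one-variable maximality argument; the paper instead compares the value of its functional at $(\nu_\epsilon,\nu'_\epsilon)$ with its value at $(\mathcal{T}\nu_\epsilon,\mathcal{T}\nu'_\epsilon)$, which yields the lower bound
\be
U(s_\epsilon,\mu'_\epsilon,\nu'_\epsilon)-U(s_\epsilon,\mu'_\epsilon,\mathcal{T}\nu'_\epsilon) - \bigl(V(t_\epsilon,\mu_\epsilon,\nu_\epsilon) - V(t_\epsilon,\mu_\epsilon,\mathcal{T}\nu_\epsilon)\bigr) \geq -\frac{1}{2\epsilon}W_2^2(\nu_\epsilon,\nu'_\epsilon),
\ee
the right-hand side vanishing as $\epsilon \to 0$ by the standard doubling estimates; this also uses no assumption on $\mathcal{T}$. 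To salvage your route you would have to prove beforehand that every sub/super-solution in the sense of Definition \ref{def:viscl} satisfies the viscosity inequalities for all \emph{partial} super/sub-differential elements; this is not obvious and is not done in the paper.
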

\begin{proof}
We only explain how the addition of the term in $\lambda$ perturbs the proof of Theorem \ref{thm:compgen}.
As in the previous proof, we consider the function
\[
Z(t,s,µ,\nu,µ',\nu') := V(t,µ,\nu) - U(s,µ',\nu') + \frac{1}{2\epsilon} ( W_2^2(µ,µ') + W_2^2(\nu,\nu') + (t-s)^2) + \rho(2T -t-s).
\]
Considering a point of minimum $(t_{\epsilon},s_{\epsilon},µ_{\epsilon},\nu_{\epsilon},µ_{\epsilon}',\nu_{\epsilon}')$ of $Z$, if $t_{\epsilon},s_{\epsilon} < T$, and arguing exactly as we did before we easily arrive at
\[
\begin{aligned}
2 \rho + \lambda(s_{\epsilon})(U(s_{\epsilon},µ'_{\epsilon},\nu'_{\epsilon})-U(s_{\epsilon},µ'_{\epsilon},\mathcal{T}\nu'_{\epsilon})) - \lambda(t_{\epsilon})(V(t_{\epsilon},µ_{\epsilon},\nu_{\epsilon}) - V(t_{\epsilon},µ_{\epsilon},\mathcal{T}\nu_{\epsilon})) \leq o(1),
\end{aligned}
\]
where the right side term vanishes as $\epsilon \to 0$. Let us compute
\[
\begin{aligned}
\lambda&(s_{\epsilon})(U(s_{\epsilon},µ'_{\epsilon},\nu'_{\epsilon})-U(s_{\epsilon},µ'_{\epsilon},\mathcal{T}\nu'_{\epsilon})) - \lambda(t_{\epsilon})(V(t_{\epsilon},µ_{\epsilon},\nu_{\epsilon}) - V(t_{\epsilon},µ_{\epsilon},\mathcal{T}\nu_{\epsilon})) \\
&\geq -C |\lambda(t_{\epsilon}) - \lambda(s_{\epsilon})| + \lambda(s_{\epsilon})(U(s_{\epsilon},µ'_{\epsilon},\nu'_{\epsilon})-U(s_{\epsilon},µ'_{\epsilon},\mathcal{T}\nu'_{\epsilon}) -(V(t_{\epsilon},µ_{\epsilon},\nu_{\epsilon}) - V(t_{\epsilon},µ_{\epsilon},\mathcal{T}\nu_{\epsilon})))\\
&\geq -C |\lambda(t_{\epsilon}) - \lambda(s_{\epsilon})| + \lambda(s_{\epsilon})\frac{1}{2\epsilon}(W_2^2(\mathcal{T}\nu_{\epsilon},\mathcal{T}\nu'_{\epsilon}) - W_2^2(\nu_{\epsilon},\nu'_{\epsilon}))\\
&\geq - C |\lambda(t_{\epsilon}) - \lambda(s_{\epsilon})| - \lambda(s_{\epsilon})\frac{1}{2\epsilon}W_2^2(\nu_{\epsilon},\nu'_{\epsilon}).
\end{aligned}
\]
Remark that in the previous, $C$ only depends on the bounds on $U$ and $V$. The limit of the last lower bound in the previous chain of inequalities is $0$ as $\epsilon \to 0$. Indeed, as in the previous proof of the comparison principle, we also recover that $\lim_{\epsilon \to 0} s_{\epsilon}-t_{\epsilon} =\lim_{\epsilon \to 0} \epsilon^{-1}W_2^2(\nu_{\epsilon},\nu'_{\epsilon})= 0$. Hence, using the continuity of $\lambda$ we obtain the required result by following the same argument as in the proof of Theorem \ref{thm:compgen}.
\end{proof}
\begin{Rem}
No assumption on $\mathcal{T}$ is needed here.
\end{Rem}

\subsubsection{The case of a target measure being pushed by a diffusion}
We now turn to the case of \eqref{hjbw}. This equation being of second order in $w$, the definition of viscosity solution is more involved. Indeed, because we are interested in viscosity solutions of a second order HJB equation, we need to introduce super-jets. We only consider particular forms of super-jets, namely only ones which are of interest for our problem, which is only of second order in the $w$ variable. 

For a function $U : [0,T]\times \mptd \times \T^d \to \R$, and $(t,µ,w) \in [0,T)\times \mptd \times \T^d$, the super-jet $J^+(U)(t,µ,w)$ of $U$ at $(t,µ,w)$ is defined as the set of $(\theta,\psi,p,X) \in \R\times (\T^d \to \mprd)\times\R^d\times S_d(\R)$ such that for any $t'\geq t,µ' \in \mptd, w' \in \T^d$ and $\gamma \in \Pi(µ,µ')$,
\[
\ba
U(t',µ',w') \leq &U(t,µ,w)+ \theta(t' - t)+ \int_{\T^{2d}}\int_{ \R^d}z\cdot(y-x)\gamma(dx,dy)\psi(x,dz)\\
& + p\cdot (w'-w) + (w'-w)\cdot X\cdot (w'-w)\\
& + o\left(|t'-t|+ \left(\int_{\T^{2d}}|x-y|^2d\gamma(dx,dy)\right)^{\frac 12} +  |w'-w|^2\right).
\ea
\]
Note that this notion of super-jet might not seem to be the most natural at first glance, since we omitted  the cross derivatives terms involving $w$ and $t$ or $µ$, which resumes to considering only super-jet in which those elements vanish. However, this notion is sufficient for the analysis we provide here, in particular because of the doubling of variables we are going to take.

As we did for superdifferentials, we define $$J^-(U)(t,µ,w) = \{(\theta,\gamma,p,X), (-\theta, (x \to (-Id)_{\#}\psi(x,\cdot)),-p,-X) \in J^+(-U)(t,µ,w) \}.$$

We can now introduce the notion of viscosity solutions of \eqref{hjbw}.
\begin{Def}\label{def:viscw}
An usc (resp. lsc) function $U$ is a viscosity sub-solution (resp. super-solution) of \eqref{hjbw} if for any $(t,µ,w) \in [0,T)\times \mptd\times \T^d$, $(\theta,\psi,p,X) \in J^+(U)(t,µ,w)$ (resp. $J^-(U)(t,µ,w)$) 
\[
\ba
-\theta&+  \bar{\mathcal{H}}\left(t,\mu,\psi\right) -\frac{\sigma^2(t)}{2}Tr(X)\leq 0\text{ (resp. } \geq 0 ).
\ea
\]
A viscosity solution of \eqref{hjbw} is a bounded function $U$ such that $U_*$ is a super-solution and $U^*$ is a sub-solution.
\end{Def}

Once again, a comparison principle result holds for this type of equation.
\begin{Prop}\label{prop:compw}
Assume that in addition to Hypothesis \ref{hyp:hgen}, there exists $C > 0$ such that for all $t \in [0,T],x \in \T^d, µ \in \mptd, p \in \R^d$
\be\label{hyphsigma}
\ba
|H(t,x,µ,p)| \leq C(1 + |p|^2),\\
|D_pH(t,x,µ,p)| \leq C(1 + |p|).
\ea
\ee
Let $U$ and $V$ be respectively a bounded viscosity sub-solution and a bounded viscosity super-solution of \eqref{hjbw}. If $U(0,µ,w) \leq V(0,µ,w)$, then $U \leq V$.
\end{Prop}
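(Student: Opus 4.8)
The plan is to follow the same doubling-of-variables scheme as in Theorem \ref{thm:compgen}, but now doubling the $w$ variable as well as the time and measure variables, and invoking the Crandall--Ishii lemma (the second-order maximum principle from \citep{crandall1992user}) to control the second-order terms $Tr(X)$ coming from the Laplacian. Arguing by contradiction, I would suppose $\sup(U-V) \geq \kappa > 0$ and introduce, for $\epsilon > 0$ and a small parameter $\rho > 0$, the penalized functional
\be
\ba
Z_\epsilon(t,s,�,�',w,w') = V(s,�',w') - U(t,�,w) &+ \tfrac{1}{2\epsilon}\big((t-s)^2 + W_2^2(�,�') + |w-w'|^2\big)\\
& + \rho(t+s),
\ea
\ee
minimized over the compact set $[0,T]^2 \times \mptd^2 \times (\T^d)^2$. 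Let $(t_\epsilon,s_\epsilon,�_\epsilon,�'_\epsilon,w_\epsilon,w'_\epsilon)$ be a minimizer. As before, standard doubling estimates give $\epsilon^{-1}(t_\epsilon-s_\epsilon)^2 \to 0$, $\epsilon^{-1}W_2^2(�_\epsilon,�'_\epsilon)\to 0$ and, crucially here, $\epsilon^{-1}|w_\epsilon-w'_\epsilon|^2 \to 0$.

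First I would reduce to the interior case $t_\epsilon, s_\epsilon > 0$, the boundary case $t_\epsilon=0$ or $s_\epsilon=0$ being handled exactly as in Step 3 of Theorem \ref{thm:compgen} to produce a contradiction with the initial inequality $U(0,\cdot,\cdot)\leq V(0,\cdot,\cdot)$. In the interior, the measure variable is handled as before: Proposition \ref{prop:super} and Lemma \ref{lemma:inc} supply the map $\psi_\epsilon$ built from an optimal coupling $\gamma^o_\epsilon \in \Pi^{opt}(�_\epsilon,�'_\epsilon)$ via $(\pi_1,\epsilon^{-1}(\pi_1-\pi_2))_\#\gamma^o_\epsilon = �_\epsilon(dx)\psi_\epsilon(x,dz)$. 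For the $w$ variable, I would apply the Crandall--Ishii lemma to the penalization $\tfrac{1}{2\epsilon}|w-w'|^2$ to obtain matrices $X_\epsilon, Y_\epsilon \in S_d(\R)$ with $p_\epsilon = \epsilon^{-1}(w_\epsilon - w'_\epsilon)$ such that $(\rho + \epsilon^{-1}(t_\epsilon-s_\epsilon), \psi_\epsilon, p_\epsilon, X_\epsilon) \in J^+(U)(t_\epsilon,�_\epsilon,w_\epsilon)$ and $(-\rho - \epsilon^{-1}(s_\epsilon-t_\epsilon),\psi_\epsilon, p_\epsilon, -Y_\epsilon) \in J^-(V)(s_\epsilon,�'_\epsilon,w'_\epsilon)$, together with the matrix inequality
\be
\begin{pmatrix} X_\epsilon & 0 \\ 0 & Y_\epsilon \end{pmatrix} \leq \frac{3}{\epsilon}\begin{pmatrix} I & -I \\ -I & I \end{pmatrix},
\ee
which in particular yields $Tr(X_\epsilon) - Tr(Y_\epsilon) \leq 0$, so that $-\tfrac{\sigma^2(t_\epsilon)}{2}Tr(X_\epsilon) + \tfrac{\sigma^2(s_\epsilon)}{2}Tr(Y_\epsilon)$ is controlled. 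Writing the two viscosity inequalities from Definition \ref{def:viscw} and subtracting, the second-order contribution has the favorable sign and the Hamiltonian difference is estimated exactly as in Step 2 of Theorem \ref{thm:compgen}, now using the quadratic growth bound on $H$ in \eqref{hyphsigma} to keep the $\epsilon^{-1}|x-y|^2$-type terms integrable against $\gamma^o_\epsilon$; this leaves $2\rho \leq o(1)$ as $\epsilon \to 0$, the desired contradiction.

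The main obstacle is ensuring compatibility between the intrinsic measure-variable machinery and the standard finite-dimensional Crandall--Ishii lemma in the $w$ variable. The subtlety is that the super/sub-jets in Definition \ref{def:viscw} bundle together an intrinsic first-order object $\psi$ in the measure direction with a classical second-order jet $(p,X)$ in the Euclidean $w$ direction, and these two directions are penalized by different geometries ($W_2^2$ versus $|w-w'|^2$). I would need to verify that the two penalizations decouple at the minimizer, i.e. that one may apply Proposition \ref{prop:super}/Lemma \ref{lemma:inc} in the $(t,�)$ variables and the Crandall--Ishii lemma in the $w$ variable independently, assembling the results into genuine elements of $J^+(U)$ and $J^-(V)$. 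This is where the boundedness of the support of $\psi_\epsilon$ (guaranteed since $\T^d$ is compact) and the regularity hypotheses \eqref{hyphsigma} on $H$ enter decisively, the latter being needed precisely so that the Hamiltonian term does not spoil the cancellation provided by the matrix inequality.
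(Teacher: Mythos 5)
Your plan founders on exactly the step you flag as ``to be verified,'' and that step cannot be repaired inside the finite-dimensional framework you propose. The Crandall--Ishii lemma, applied in the $w$ variable at the minimizer of $Z_\epsilon$, concerns the \emph{partial} functions $w \mapsto U(t_\epsilon,\mu_\epsilon,w)$ and $w' \mapsto V(s_\epsilon,\mu'_\epsilon,w')$, with the time and measure entries frozen; what it returns are elements of the \emph{closures} of the second-order jets of these partial functions, attained moreover at nearby points $w_n \to w_\epsilon$. Definition \ref{def:viscw}, however, asks for elements of the \emph{joint} jets $J^{\pm}$: a single inequality valid simultaneously for all $(t',\mu',w')$ and all couplings $\gamma$. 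For a merely semicontinuous function, a first-order jet in $(t,\mu)$ at frozen $w$ and a second-order jet in $w$ at frozen $(t,\mu)$ do not recombine into a joint jet --- there is no Taylor expansion to glue the two partial inequalities, and nothing controls the cross variations $U(t',\mu',w')-U(t',\mu',w)$. Conversely, the only joint jets the penalization hands you for free (by the argument of Lemma \ref{lemma:inc}, which does extend here because the penalization is a sum of functions of the separate variables) have second-order parts equal to the explicit Hessians $+\tfrac{1}{2\epsilon}I$ for $U$ and $-\tfrac{1}{2\epsilon}I$ for $V$; plugging these into Definition \ref{def:viscw} and subtracting gives an inequality of the form $2\rho \leq o(1) + \tfrac{d}{4\epsilon}\left(\sigma^2(t_\epsilon)+\sigma^2(s_\epsilon)\right)$, which is no contradiction at all. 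In short, the trace terms can only be given the favorable sign by a second-order lemma applied \emph{jointly} in the measure and $w$ directions, and no such lemma exists intrinsically on $\mptd \times \T^d$. (A secondary issue: with a single parameter $\epsilon$ penalizing both $(t-s)^2$ and $|w-w'|^2$, the term $(\sigma^2(t_\epsilon)-\sigma^2(s_\epsilon))$ times a trace of order $1/\epsilon$ is not obviously negligible, since $|t_\epsilon - s_\epsilon|$ is only $o(\sqrt{\epsilon})$; one usually penalizes time by a separate parameter sent to zero first.)

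This obstruction is precisely why the paper's proof of Proposition \ref{prop:compw} looks so different from that of Theorem \ref{thm:compgen}. It lifts $U$ and $V$ to functions $\tilde U,\tilde V$ on $[0,T]\times \K^2(\Omega,\R^d)\times\T^d$, so that the measure variable and the $w$ variable sit inside one Hilbert space; compactness is lost in the lift, so an (essentially unique, strict) minimizer of a linearly perturbed functional is produced via Stegall's variational principle \citep{stegall1978optimization}; then Lemma 4 of Lions \citep{lions1989viscosity} --- the Hilbert-space analogue of the Crandall--Ishii lemma --- delivers approximate \emph{joint} jets of $\tilde U$ and $\tilde V$ together with the crucial matrix ordering $S \leq S'$; finally, Proposition \ref{prop:equiv} converts these lifted jets into intrinsic ones in the sense of Definition \ref{def:viscw}, after which estimates like your Step 2 close the argument, taking the limits $\delta \to 0$, then $\alpha \to 0$, then $\epsilon \to 0$. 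If you wish to keep your structure, this lifting is the unavoidable substitute for ``Crandall--Ishii in $w$ only'': the measure direction cannot be treated purely at first order in a second-order problem.
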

\begin{proof}
As usual in viscosity solution theory, we argue by contradiction and we assume that there exists $\kappa, \rho > 0$, such that for any $\epsilon > 0$
\[
\inf \left\{ V(s,µ',w') - U(t,µ,w) + \frac{1}{2\epsilon}\bigg((t-s)^2 + W_2^2(µ,µ') + |w-w'|^2\bigg) + \rho (2T -t-s)\right\} \leq -\kappa,
\]
where the infimum is taken over all $s,t \leq T, w,w' \in \T^d, µ,µ' \in \mptd$. \\

\textit{Step 1: Reformulation of the problem in the Hilbert space.}\\
As \eqref{hjbw} involves second order terms, we a priori need to use similar techniques as in \citep{crandall1992user} to conclude. Hence, we build on Lions' Hilbertian approach to transform the problem.
 
Let us consider an atomeless probabilistic space $(\Omega', \mathcal{A}', \mathbb{P}')$ and define $\tilde{V}(t,X,w) = V(t,\mathcal{L}_{\T^d}(X),w)$ and $\tilde{U}(t,X,w) = U(t,\mathcal{L}_{\T^d}(X),w)$ for $X \in \K^2(\Omega',\R^d)$, and where, for the rest of this proof, for $X \in \K^2(\Omega',\R^d)$, $\mathcal{L}_{\T^d}(X) = \mathcal{L}(p_{\T^d}(X))$ where $p_{\T^d} : \R^d \to \T^d$ is the natural projection. Recall that $\K^2(\Omega',\R^d)$ is a separable Hilbert space. \\

We can now consider
\[
\ba
\Phi(t,s,X,Y,w,w') := \tilde{V}(s,Y,w')& - \tilde{U}(t,X,w) + \frac{1}{2\epsilon}\bigg((t-s)^2 + \mathbb{E}_{\mathbb{P}'}[|X-Y|^2] + |w-w'|^2\bigg)+\\
 &+ \rho (2T -t-s) + \alpha(\eta(X) + \eta(Y)),
\ea
\]
where $\eta(X) := \sqrt{1 + \mathbb{E}_{\mathbb{P}'}[|X|^2]}$. We obtain that, for $\alpha> 0$ sufficiently small,
\[
\inf \bigg\{\Phi(t,s,X,Y,w,w') | t,s,\leq T, X,Y \in \K^2(\Omega',\R^d),w,w' \in \T^d\bigg\}\leq -\frac{\kappa}2.
\]
Thanks to Stegall's Lemma \citep{stegall1978optimization}, we know that for any $\delta > 0$, there exist $\beta_1,\beta_2 \in \R, Z_1,Z_2 \in \K^2(\Omega',\R^d)$, $|\beta_1|, |\beta_2|, \mathbb{E}_{\mathbb{P}'}[|Z_1|^2], \mathbb{E}_{\mathbb{P}'}[|Z_2|^2]\leq \delta$ and 
\[
\Phi(t,s,X,Y,w,w') + \mathbb{E}_{\mathbb{P}'}[Z_1\cdot X + Z_2 \cdot Y] + \beta_1 t + \beta_2 s
\]
has a unique strict minimum at point $(\bar{t},\bar{s},\bar{X},\bar{Y},\bar{w},\bar{w}')$. The case in which the minimum is reached for $\bar{t} = T$ or $\bar{s}= T$ can be treated as in the proof of Theorem \ref{thm:compgen} to arrive at a similar contradiction and we do not reproduce it.

Hence we assume that $\bar{t},\bar{s} < T$. Our goal is to use Lemma 4 in Lions \citep{lions1989viscosity}, to consider appropriate elements in the super-jets. Note that the function we consider is not defined on an Hilbert space a priori but since Lemma 4 in \citep{lions1989viscosity} is only a local result, we can consider that it is the case since $[0,T]\times \T^d$ is locally similar to $ \R^{d+1}$. We consider now an orthonormal basis of $\R\times \K^2(\Omega',\R^d)\times \R^d$ such that the first $d$ elements are given by $(0,0,e_i)$ where the $(e_i)$ are the elements of the canonical basis of $\R^d$. Using finally Lemma 4 in \citep{lions1989viscosity}, we obtain that there exist 
\begin{itemize}
\item Two matrices $S,S' \in S_d(\R)$ such that $S \leq S'$.
\item A sequence $(t_n,s_n,X_n,Y_n,w_n,w'_n)_{n\geq 0}$ converging toward $(\bar{t},\bar{s},\bar{X},\bar{Y},\bar{w},\bar{w}')$.
\item A sequence $(\omega_n, \omega'_n,\xi_n,\xi'_n,p_n,p'_n,A_n, B_n)_{n \geq 0}$ converging toward $0$.
\end{itemize}
such that 
 \footnote{We use the equivalent notation for the standard super jets of the functions $\tilde{U}$ and $\tilde{V}$ to avoid introducing a new one.}
\be\label{eq:882}
\ba
&(\beta_1-\rho +\epsilon^{-1}(\bar{t}-\bar{s}) + \omega_n, \epsilon^{-1}(\bar{X}-\bar{Y}) + \xi_n + Z_1 + \alpha\nabla\eta(\bar{X}), \epsilon^{-1}(\bar w - \bar w ')+p_n,S + A_n)\\
&\quad  \in J^+(\tilde{U})(t_n,X_n,w_n),\\
&(-\beta_2 + \rho + \epsilon^{-1}(\bar{t}-\bar{s}) + \omega'_n, \epsilon^{-1}(\bar{X}-\bar{Y}) + \xi'_n-Z_2 - \alpha\nabla\eta(\bar{Y}), \epsilon^{-1}(\bar w - \bar w ')+p'_n,S' + B_n)\\
& \quad \in J^-(\tilde{V})(s_n,Y_n,w'_n).
\ea
\ee

\textit{Step 2: Coming back to the original formulation.}\\
Thanks to Proposition \ref{prop:equiv}, \eqref{eq:882} implies in particular that
\[
\ba
&(\beta_1-\rho +\epsilon^{-1}(\bar{t}-\bar{s}) + \omega_n, \psi_n, \epsilon^{-1}(\bar w - \bar w') +p_n,S + A_n) \in J^+(U)(t_n,\mathcal{L}_{\T^d}(X_n),w_n),\\
&(-\beta_2 + \rho + \epsilon^{-1}(\bar{t}-\bar{s})+ \omega'_n, \varphi_n, \epsilon^{-1}(\bar w - \bar w') + p'_n,S' + B_n) \in J^-(V)(s_n,\mathcal{L}_{\T^d}(Y_n),w'_n),
\ea
\]
where $\psi_n$ and $\varphi_n$ satisfy
\[
\ba
\mathcal{L}(p_{\T^d}(X_n),\epsilon^{-1}(\bar{X}-\bar{Y}) + \xi_n+ Z_1 +\alpha \nabla\eta(\bar{X}))(dx,dz) = \mathcal{L}_{\T^d}(X_n)(dx)\psi_n(x,dz),\\
\mathcal{L}(p_{\T^d}(Y_n),\epsilon^{-1}(\bar{X}-\bar{Y}) + \xi'_n-Z_2 -\alpha \nabla\eta(\bar{Y}))(dx,dz) = \mathcal{L}_{\T^d}(Y_n)(dy)\varphi_n(x,dz).
\ea
\]

Using the fact that $U$ is a subsolution of \eqref{hjbw}, we obtain that
\be\label{viscU}
\ba
\rho -\epsilon^{-1}(\bar{t}-\bar{s}) -\beta_1+ \omega_n + \bar{\mathcal{H}}(t_n,\mathcal{L}_{\T^d}(X_n),\psi_n)-\frac{1}{2}Tr(S + A_n) \leq0.
\ea
\ee
Using the fact that $V$ is a super solution, we obtain that
\be\label{viscV}
-\rho - \epsilon^{-1}(\bar{t}-\bar{s})+ \beta_2 + \omega'_n + \bar{\mathcal{H}}(s_n,\mathcal{L}_{\T^d}(Y_n),\varphi_n)-\frac{1}{2}Tr(S' + B_n) \geq0.
\ee
Let us compute
\[
\ba
&|\bar{\mathcal{H}}(t_n,\mathcal{L}_{\T^d}(X_n),\psi_n) - \mathbb{E}_{\mathbb{P}'}[H(\bar{t},p_{\T^d}(\bar{X}),\mathcal{L}_{\T^d}(\bar{X}),\epsilon^{-1}(\bar{X}-\bar{Y})+ Z_1 + \alpha\nabla\eta(\bar{X}) )]|\\
&=|\mathbb{E}_{\mathbb{P}'}[H(t_n,p_{\T^d}(X_n),\mathcal{L}_{\T^d}(X_n),\epsilon^{-1}(\bar{X}-\bar{Y}) + \xi_n+ Z_1 +\alpha \nabla\eta(\bar{X}))]\\
&\quad \quad \quad \quad - \mathbb{E}_{\mathbb{P}'}[H(\bar{t},p_{\T^d}(\bar{X}),\mathcal{L}_{\T^d}(\bar{X}),\epsilon^{-1}(\bar{X}-\bar{Y})+ Z_1 + \alpha\nabla\eta(\bar{X}) )] |.
\ea
\]
From the growth assumption on $H$ and the dominated convergence Theorem, we deduce that the previous difference vanishes as $n \to \infty$. Hence, we can pass to the limit $n \to \infty$ in \eqref{viscU} and \eqref{viscV} and we obtain
\[
\ba
\rho -\epsilon^{-1}(\bar{t}-\bar{s})  - \beta_1+\mathbb{E}_{\mathbb{P}'}[H(\bar{t},p_{\T^d}(\bar{X}),\mathcal{L}_{\T^d}(\bar{X}),\epsilon^{-1}(\bar{X}-\bar{Y})+ Z_1 + \alpha\nabla\eta(\bar{X}) )]-\frac{1}{2}Tr S  \leq0,\\
-\rho - \epsilon^{-1}(\bar{t}-\bar{s}) + \beta_2 + \mathbb{E}_{\mathbb{P}'}[H(\bar{s},p_{\T^d}(\bar{Y}),\mathcal{L}_{\T^d}(\bar{Y}),\epsilon^{-1}(\bar{X}-\bar{Y})- Z_2 - \alpha\nabla\eta(\bar{Y}) )]-\frac{1}{2}Tr S'  \geq0.
\ea
\]
\textit{Step 3: Standard viscosity solutions estimates.}\\
Taking the difference of the two previous inequalities yields
\[
\ba
2 \rho &\leq \beta_1 + \beta_2+ \mathbb{E}_{\mathbb{P}'}[H(\bar{t},p_{\T^d}(\bar{X}),\mathcal{L}_{\T^d}(\bar{X}),\epsilon^{-1}(\bar{X}-\bar{Y})+ Z_1 + \alpha\nabla\eta(\bar{X}) )]\\
& \quad - \mathbb{E}_{\mathbb{P}'}[H(\bar{s},p_{\T^d}(\bar{Y}),\mathcal{L}_{\T^d}(\bar{Y}),\epsilon^{-1}(\bar{X}-\bar{Y})- Z_2 - \alpha\nabla\eta(\bar{Y}) )   ].
\ea
\]
Recalling Hypothesis \ref{hyp:hgen} and \eqref{hyphsigma}, we can estimate
\[
\ba
H&(\bar{t},p_{\T^d}(\bar{X}),\mathcal{L}_{\T^d}(\bar{X}),\epsilon^{-1}(\bar{X}-\bar{Y})+ Z_1 + \alpha\nabla\eta(\bar{X}) )   - H(\bar{s},p_{\T^d}(\bar{Y}),\mathcal{L}_{\T^d}(\bar{Y}),\epsilon^{-1}(\bar{X}-\bar{Y})- Z_2 - \alpha\nabla\eta(\bar{Y}) ) \\
=&  H(\bar{t},p_{\T^d}(\bar{X}),\mathcal{L}_{\T^d}(\bar{X}),\epsilon^{-1}(\bar{X}-\bar{Y})+ Z_1 + \alpha\nabla\eta(\bar{X}) )  - H(\bar{s},p_{\T^d}(\bar{Y}),\mathcal{L}_{\T^d}(\bar{Y}),\epsilon^{-1}(\bar{X}-\bar{Y})+ Z_1 + \alpha\nabla\eta(\bar{X}) )\\
& +H(\bar{s},p_{\T^d}(\bar{Y}),\mathcal{L}_{\T^d}(\bar{Y}),\epsilon^{-1}(\bar{X}-\bar{Y})+ Z_1 + \alpha\nabla\eta(\bar{X}) )- H(\bar{s},p_{\T^d}(\bar{Y}),\mathcal{L}_{\T^d}(\bar{Y}),\epsilon^{-1}(\bar{X}-\bar{Y})- Z_2 - \alpha\nabla\eta(\bar{Y}) )\\
 \leq & C\bigg(1 + |\epsilon^{-1}(\bar{X}-\bar{Y})+ Z_1 + \alpha\nabla\eta(\bar{X})|\bigg)\bigg(|\bar t - \bar s| + W_2(\mathcal{L}(\bar{X}), \mathcal{L}(\bar Y)) + |\bar X- \bar Y|\bigg)\\
& + C \bigg(1 + |\epsilon^{-1}(\bar{X}-\bar{Y})|\bigg)(| Z_1 + \alpha\nabla\eta(\bar{X}) + Z_2 + \alpha \nabla \eta(\bar Y)|\bigg),
\ea\]
where the last line is obtained by assuming that $\alpha, \delta \leq 1$, which we can do without loss of generality.
We then deduce that
\[
\ba
2 \rho \leq&\beta_1 + \beta_2 +C \mathbb{E}_{\mathbb{P}'}[( 1+ \epsilon^{-1}|\bar{X}-\bar{Y}|)(|\bar{X}-\bar{Y}|+ |\bar{t} - \bar{s}| + \sqrt{\mathbb{E}_{\mathbb{P}'}[|\bar X-\bar Y|^2]})]\\
& + C\mathbb{E}_{\mathbb{P}'}[(1 + \epsilon^{-1}|\bar{X}-\bar{Y}|)(|Z_1 + \alpha \nabla\eta(\bar{X}) +Z_2 + \alpha \nabla\eta(\bar{Y})| )].
\ea
\]
From the same argument as in the proof of Theorem \ref{thm:compgen}, we obtain that the third term of the right side vanishes as $\epsilon \to 0$, uniformly in $\alpha$ and $\delta$, recall that $\delta$ measures the size of $\beta_1,\beta_2,Z_1$ and $Z_2$. From Cauchy Schwarz inequality we finally deduce that the second term of the right side is bounded by 
\[
C\left(1 + \epsilon^{-1}\sqrt{\mathbb{E}_{\mathbb{P}'}[|\bar{X} - \bar{Y}|^2]}\right)(\delta + \alpha).
\]
Hence, taking the limits $\delta \to 0$ then $ \alpha \to 0$ then $\epsilon \to 0$, we conclude that $2\rho \leq 0$, which is a contradiction.\\

 In definitive we have indeed proven that $U \leq V$.
\end{proof}
\begin{Rem}
The growth assumptions on $H$ specified in the statement of Proposition \ref{prop:compw} seems to be removable by the use of techniques which are not particularly new to viscosity solutions theory. However, since such questions are not the core ones of our paper, we leave them for future research.
\end{Rem}

These comparison principles are essential tools to characterize functions as viscosity solutions of equations of the form of \eqref{hjb1}. Would the terminal conditions in our problems be continuous functions, the previous results would be enough to develop a a proper theory of \eqref{hjb1}. However because of the singularity that we expect at the origin, we shall have to characterize the behaviour of the solution near $t = T$ to have proper comparison principle. Namely, we shall use the following result.
\begin{Theorem}\label{thm:compinit}
Under the assumptions of Proposition \ref{prop:compl}, consider $U$ and $V$, two viscosity solutions of \eqref{hjbl}, locally bounded in $[0,T)\times \mptd^2$ such that 
\[
\lim_{t \to T} \sup_{µ,\nu \in \mptd} |U(t,µ,\nu) - V(t,µ,\nu)| = 0.
\]
Then $U= V$.
\end{Theorem}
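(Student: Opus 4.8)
The plan is to reduce this refined uniqueness statement to the comparison principle of Proposition \ref{prop:compl} by localizing in time near $T$ and then passing to a limit. Since the conclusion $U=V$ is symmetric, it suffices to prove $U\le V$ and then exchange the roles of $U$ and $V$. First I would fix $\delta>0$ and use the hypothesis to select $\tau_\delta\in(0,T)$ such that $\sup_{t\in[\tau_\delta,T),\,\mu,\nu\in\mptd}|U(t,\mu,\nu)-V(t,\mu,\nu)|\le\delta$. The elementary but essential observation is that \eqref{hjbl} is invariant under the addition of a constant: $\partial_t$ annihilates it, $\mathcal H$ sees only $\nabla_\mu$, and the zeroth order term $\lambda(t)(U(t,\mu,\nu)-U(t,\mu,\mathcal T\nu))$ is a difference in which the constant cancels. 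Hence $V+\delta$ is again a supersolution of \eqref{hjbl}, and at the slab's terminal time one has $U(\tau_\delta,\cdot)\le V(\tau_\delta,\cdot)+\delta=(V+\delta)(\tau_\delta,\cdot)$.

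Next I would run the comparison of Proposition \ref{prop:compl} on the slab $[0,\tau_\delta]\times\mptd^2$, with $\tau_\delta$ in the role of the terminal time $T$. The hypotheses are met: on the compact set $[0,\tau_\delta]\times\mptd^2$ the locally bounded functions $U$ and $V$ are bounded, $\lambda$ is continuous and nonnegative, and the proof of Proposition \ref{prop:compl} uses the terminal time only to fix the penalisation $\rho(2\tau_\delta-t-s)$, being otherwise insensitive to its precise value. This yields $U\le V+\delta$ on $[0,\tau_\delta]$; combined with the bound on $[\tau_\delta,T)$ coming from the choice of $\tau_\delta$, we get $U\le V+\delta$ on all of $[0,T)$. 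Letting $\delta\to0$ gives $U\le V$, and the symmetric argument gives $V\le U$, whence $U=V$.

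The step requiring the most care is the terminal ordering on the slab. In the viscosity formalism the comparison is genuinely between the usc envelope $U^*$ (a subsolution) and $V_*+\delta$ (a supersolution), so one must secure $U^*(\tau_\delta,\cdot)\le V_*(\tau_\delta,\cdot)+\delta$ and not merely the pointwise bound on $U,V$ themselves; relating a $\limsup$-type envelope to a $\liminf$-type envelope is exactly where a naive pointwise estimate is insufficient. I expect this to be the crux, and I would resolve it by not isolating a single terminal slice but rather exploiting that the hypothesis controls $|U-V|$ uniformly on the entire left-neighbourhood $[\tau_\delta,T)$ and uniformly in $(\mu,\nu)$. Concretely, one re-examines the boundary case of the doubling-of-variables argument (the analogue of Step 3 in the proof of Theorem \ref{thm:compgen}): when the penalised infimum concentrates as $\epsilon\to0$ at times $t_\epsilon,s_\epsilon\to T$ with $W_2(\mu_\epsilon,\mu'_\epsilon)+W_2(\nu_\epsilon,\nu'_\epsilon)+|t_\epsilon-s_\epsilon|\to0$, the difference of the two values is squeezed by $\sup_{[\tau_\delta,T)}|U-V|\le\delta$, so that letting first $\epsilon\to0$ and then $\delta\to0$ produces the desired contradiction with $\inf(V-U)\le-\kappa/2$. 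It is precisely this uniform, measure-independent asymptotic matching that the statement assumes, and it is what lets the singular behaviour at $t=T$ be handled without a genuine terminal datum.
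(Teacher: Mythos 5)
Your main line is correct and is a genuinely different organization from the paper's. You use Proposition \ref{prop:compl} as a black box: pick $\tau_\delta<T$ with $\sup_{t\in[\tau_\delta,T)}\sup_{\mu,\nu}|U-V|\le\delta$, note that \eqref{hjbl} is invariant under addition of constants (the $\lambda$-term is a difference, so constants cancel and the sub/super-differentials are unchanged, hence $V+\delta$ is again a supersolution), run the comparison on the slab $[0,\tau_\delta]$ with $\tau_\delta$ as terminal time, and let $\delta\to0$. The paper instead argues by contradiction and re-runs the doubling machinery: assuming $U-V>\kappa$ at some point, it minimizes the penalised functional on $[0,T-\delta]^2\times\mptd^4$, excludes interior minima by the viscosity step, so the minimum sits on the slice $\{t=T-\delta\}\cup\{s=T-\delta\}$; letting $\epsilon\to0$ it obtains $\sup_{\mu,\nu}(U-V)(T-\delta,\cdot,\cdot)>\kappa$, and then $\delta\to0$ contradicts the hypothesis. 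The structural difference matters: your route needs a terminal ordering at $\tau_\delta$ as an \emph{input} to the comparison principle, whereas the paper's produces a slice estimate as an \emph{output} and contradicts the hypothesis with it, which is why it never has to secure such an input.

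This is exactly where your proposal has its one genuine defect. The crux you identify (ordering $U^*$ against $V_*$ at the slice, given only equal-point closeness of $U$ and $V$) is real, but the fix you sketch does not work as stated. You cannot let the penalised infimum concentrate at times $t_\epsilon,s_\epsilon\to T$: the set $[0,T)^2\times\mptd^4$ is not compact, and $U,V$ are only locally bounded there --- in the intended application both behave like $W_2^2(\mu,\nu)/(2(T-t))$, so taking $\mu=\mu'$, $\nu=\nu'$ with $\mu\ne\nu$, $t=T-2\eta$, $s=T-\eta$ gives $V(t,\mu,\nu)-U(s,\mu,\nu)+\frac{1}{2\epsilon}(t-s)^2\approx-\frac{W_2^2(\mu,\nu)}{4\eta}+\frac{\eta^2}{2\epsilon}\to-\infty$, so the infimum is $-\infty$ and the argument never starts. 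Moreover, the hypothesis bounds $|U-V|$ only at coincident space-time points, so it cannot directly squeeze $V(t_\epsilon,\mu_\epsilon,\nu_\epsilon)-U(s_\epsilon,\mu'_\epsilon,\nu'_\epsilon)$ at distinct points. The workable version is the paper's ordering of limits: confine the doubling to $[0,T-\delta]$, send $\epsilon\to0$ first so that semicontinuity merges the two arguments into a single slice point, and only then send $\delta\to0$. To be fair, the residual usc-versus-lsc mismatch is elided by the paper as well (its slice inequality is written for $U,V$ themselves, not $U^*,V_*$); granting the slice ordering at that same level of rigor, your slab argument is complete and arguably cleaner than re-running the doubling.
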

\begin{proof}
By a symmetry argument, it is sufficient to prove $U \leq V$. Assume that it is not the case, hence that there exists $\kappa > 0$ and $t \in [0,T), µ,\nu \in \mptd$ such that
\[
U(t,µ,\nu) - V(t,µ,\nu) > \kappa.
\]
Take $\delta > 0$. By exactly the same argument as in the proof of Proposition \ref{prop:compl}, we deduce that there exists $\rho > 0$, such that for $\epsilon > 0$ small enough the minimum of the function 
\[
(t,s,µ,\nu,µ',\nu') \to V(t,µ,\nu) - U(s,µ',\nu') + \frac{1}{2\epsilon}((t-s)^2 + W_2^2(µ,µ') + W_2^2(\nu,\nu')) + \rho(2T-s-t)
\]
on the set $[0,T-\delta]^2\times \mptd^4$, is reached for either $t = T-\delta$ or $s = T-\delta$ (recall that $U$ and $V$ are both bounded on $[0,T-\delta]\times \mptd^2$). Taking the limit $\epsilon \to 0$, we deduce that
\[
\sup_{µ,\nu \in \mptd} U(T-\delta,µ,\nu) - V(T-\delta,µ,\nu) > \kappa.
\]
Taking the limit $\delta \to 0$, we obtain a contradiction and thus the result is proved.
\end{proof}
The same type of result obviously holds true for \eqref{hjbw}.

\subsection{Comments on our notion of viscosity solution}
In recent years, the study of HJB equations on the set of probability measure been the subject to a lot of works which have failed to establish general comparison principles for HJB equations associated to stochastic problems. On the other hand, the study of HJB equations set on an Hilbert space is a problem which is for the most part solved at the moment, except of course for new singular problems.

We believe that our approach provides a link between the two problems, namely through the notion of super-differential which we have chosen. In our opinion, this is a strong justification of the so-called Hilbertian approach developed by P.-L. Lions, originally to study MFG master equation. 

Recall that, in this approach, a probabilistic space $(\Omega', \mathcal{A}',\mathbb{P}')$ is fixed, and the study of \eqref{hjb} is replaced by the study of
\[
\partial_t \tilde{U} + \mathbb{E}_{\mathbb{P}'}[H(t,X,\mathcal{L}(X),\nabla \tilde{U})] = 0 \text{ in } (0,\infty)\times \K^2(\Omega',\R^d),
\]
where formally we have made the change of variable $\tilde{U}(X) = U(\mathcal{L}_{\T^d}(X))$.

This approach hints strongly the notion of super differential that we took. But maybe more importantly, it provides an interpretation for this HJB equation in the Hilbert space. Indeed, the Hilbertian approach can be interpreted as a process of labelling all the elements of mass of the measures, namely by labels $\omega \in \Omega$. This procedure allows to split mass, by assigning to the elements $\omega$ and $\omega'$ different velocities $Z(\omega)$ and $Z(\omega')$ even if they are in the same location, that is $X(\omega) = X(\omega')$, which is very reminiscent of Kantorovich's relaxation of the optimal transport problem. We come back on this kind of interpretation in the next Section.

\begin{Rem}
In the choice of super-differential we made, everything could have also been true by using not only a coupling between $µ'$ and $µ$ but a coupling $\Gamma(dx,dy,dz)$ between $µ'(dy)$ and $µ(dx)\psi(x,dz)$. Such that we could have said that $\psi \in \partial^+U(µ)$ if for all $\Gamma \in \mathcal{P}(\T^d\times \T^d\times \R^d)$ such that $(\pi_1,\pi_3)_{\#}\Gamma (dx,dz) = µ(dx)\psi(x,dz)$ and $(\pi_2)_{\#}\Gamma(dy) = µ'(dy)$, it holds that
\[
U(µ') \leq U(µ) + \int_{\T^{2d}\times \R^d}z\cdot(y-x)\Gamma(dx,dy,dz) + o\left(\left(\int_{\T^{2d}\times\R^d}|x-y|^2\Gamma(dx,dy,dz)\right)^{\frac 12}\right).
\]
\end{Rem}

 \section{Bounds on the value of the stochastic optimal transport problem near the singularity}\label{sec:sing}
We start by defining properly the value function formally introduced in Section \ref{sec:sot}. We then prove precise estimates on the behaviour of the value function $U$ near the singularity at terminal time.

\subsection{Definition of the value function}
\ca
In this section we mainly focus on the value function of the deterministic problem introduced formally in \eqref{defu}. We shall consider a non-negative cost function $L : \T^d \times \R^d \to \R$, on which assumptions shall be made later on depending on the framework which we study.

 The main difficulty in defining the value function lies in the definition of the set on which the infimum is taken in \eqref{defu}. Indeed, without regularity constraints on $\alpha$ and $m$, it is not clear how to evaluate the derivative of the product. Furthermore, $\alpha$ and $m$ have to be such that the integral which yields the cost is indeed well defined. These difficulties make it difficult to talk about $\alpha$ as the control and about $m$ as the state, as given a control, it is not clear how to define the state, as multiple solutions to the continuity equation can exist.

 In order to address this issue, in \citep{benamou2000computational}, Benamou and Brenier introduced a reformulation of the problem \eqref{benamoubrenier} into
 \[
 \inf_{m,E} \int_0^T\int_{\mathbb{T}^d}L\left(x,\frac {E_t}{m_t}\right)m_t(dx)dt,
 \]
 under the constraint that $(m,E)$ solves in the weak sense
 \be\label{eq:mE}
 \ba
 \partial_t m + \text{div}(E) = 0 \text{ in } (0,T)\times \T^d,\\
 m(0) = µ \text{ and } m(T) = \nu,
 \ea
 \ee
 and where $L\left(x,\frac Em\right)$ is set to $+\infty$ as soon as $E <<m$ is not satisfied. This (fruitful) approach allows to solve the problem of the singularity of the product $\alpha m$. However, we claim that we can introduce another way to evaluate the cost of the trajectory given by the solution of \eqref{eq:mE}, which we believe turns out to be simpler to interpret.\\
 
 Our main idea consists in saying that different "controls" can give the same evolution of the state but should yield different costs. To illustrate this, consider the following situation. The cost $L$ is simply given by $L(x,p) = |p|^2$. The initial state and the terminal constraint are both equal to $m_0$, the uniform probability measure on $\T^d$. Consider now the optimal control which consists in choosing $\alpha \equiv 0$. The associated cost is clearly $0$. Consider now the inefficient and formal control which consists in assigning to each particles, or element of mass, a constant speed chosen uniformly in the ball $B(0_{\R^d},1)$ and independently from one another. Clearly, by a symmetry argument, such a control is also admissible and also induces a constant state. However, it is very tempting to say that its cost is positive. \\
 
 To make this heuristic more precise, we introduce a problem in which the "control" is now a measurable function $\psi : [0,T] \to (\T^d \to \mprd)$. The measure $\psi_s(x,dz)$ is then interpreted as the repartition of speed we provide to the elements of mass located at $x$ at time $s$. We want to consider a "state" which is given as a solution of
 \be\label{eq:958}
 \partial_t m_t + \text{div}_x\left(\int_{\R^d} zm_t(dx)\psi_t(x,dz)  \right) = 0 \text{ in } (0,T) \times \T^d.
 \ee
The previous equation is the natural PDE to characterize the density of particles evolving with a repartition of speed $\psi$. We then want to evaluate the cost of such a pair state/control by
 \[
 \int_0^T\int_{\T^d\times\R^d}L(x,z)\psi_t(x,dz)m_t(dx)dt.
 \]
 Note that we can set $E_t(dx) := \int_{\R^d} zm_t(dx)\psi_t(x,dz) $, in which case \eqref{eq:958} is of the form of \eqref{eq:mE}. We can also set $\alpha_t(x) = \int_{\R^d}z\psi_t(x,dz)$ to realize that \eqref{eq:958} has the exact form of the usual continuity equation. In fact we have not changed the admissible trajectories but rather how to evaluate their cost. We are now ready to define properly the value functions.\\

The value function of the deterministic problem $U_{det} : [0,T)\times \mptd^2\to \R$ is defined, for $t < T, µ,\nu \in \mptd$, by
\be\label{defUdet}
U_{det}(t,µ,\nu) = \inf_{(\psi,m)} \int_t^T \int_{\T^d\times \R^d}L(x,z)\psi_s(x,dz)m_s(dx)ds,
\ee
where the infimum is taken over all pairs $(\psi,m)$ such that 
\begin{itemize}
\item $m \in \mathcal{C}([t,T],\mptd)$, and $m_t = µ, m_T = \nu$
\item $\psi : [t,T]\times \T^d \to \mprd$ is a measurable map.
\item The pair $(\psi,m)$ satisfies \eqref{eq:958} in the weak sense, i.e. for all $\varphi \in \mathcal{C}^1([t,T]\times \T^d,\R)$
\[
\int_{\T^d}\varphi(T,x)\nu(dx) - \int_{\T^d}\varphi(t,x)\mu(dx) = \int_t^T\int_{\T^d}(\partial_t\varphi(s,x) + \int_{\R^d}z\psi_s(x,dz)\cdot \nabla_x \varphi(s,x))m_s(dx)ds.
\]
\end{itemize}
We denote by $\mathcal{A}dm(t,µ,\nu)$ the set of such pairs.\\

 \bigskip
 
 Concerning the value function of the stochastic optimal transport problem, recall that we have fixed a filtered probabilistic space $(\Omega, \mathcal{A}, (\mathcal{F}_t)_{t \geq 0}, \mathbb{P})$ and a Markovian, $\mptd$ valued process $(\nu_t)_{t \geq 0}$. The value function $U : [0,T)\times \mptd^2$ is defined for $t< T, µ,\nu \in \mptd$, by
 \be\label{defUgood}
U(t,µ,\nu) = \inf_{(\psi,m)}\mathbb{E}\left[ \int_t^T \int_{\T^d\times \R^d}L(x,z)\psi_s(x,dz)m_s(dx)ds \big| \nu_t = \nu \right],
\ee
 where the infimum is taken over all pairs of random variables $(\psi,m): \Omega\to \cup_{\nu'}\mathcal{A}dm(t,µ,\nu')$ which are adapted to the filtration $(\mathcal{F}_s)_{s \geq t}$ and which are such that, $\mathbb{P}$-almost surely, on the event $\{\nu_t = \nu\}$, $m_T = \nu_T$. We denote this set $\mathcal{A}dm^{sto}(t,µ,\nu)$.
 \begin{Rem}
 The definition is exactly similar in the case in which we can make a reduction of variable by replacing $\nu$ by $w$.
 \end{Rem}

 \subsection{On the choice of the cost functional}
 We explain here, on three simple examples, the effect of the growth of the cost functional on the type of behavior we may expect near $t=T$, in the case of a deterministic problem. Such type of behaviors are well known in the optimal control theory and we shall pass through those examples quite rapidly.
 \subsubsection{Cost functional with linear growth}\label{sec:costlinear}
 Assume that the cost function $L$ is given by
 \[
 L(x,\alpha) :=|\alpha| .
 \]
 In this context, if we are concerned with \eqref{hjb1}, observe that for any $t \in [0,T]$
 \[
 U_{det}(t,µ,\nu) = U_{det}(T-1,µ,\nu) = W_1(µ,\nu).
 \]
 Indeed, for any $(t,µ,\nu)$, take an admissible pair $(\psi,m)$ in $\mathcal{A}dm(t,µ,\nu)$ and consider $\eps < T-t$. Remark that the pair $(\psi',m')$ defined by
 \[
 \ba
 \psi'_s = \frac{T-t}{T-t-\eps}Id_{\#}\psi_{\phi(s)},\\
m'_s = m_{\phi(s)},
 \ea
 \]
 where $\phi(s) = \frac{T-t}{T-t-\eps}(s-t + \eps) + t$, belongs to $\mathcal{A}dm(t+\eps,µ,\nu)$ and that moreover they have the same cost. This implies that $U_{det}(t,µ,\nu) \geq U_{det}(t+ \eps,µ,\nu)$. The inverse construction yields the opposite inequality.\\
 
 In this situation, the cost is sufficiently low for high controls to allow the value to be bounded uniformly in time. The state constraint is then very easily achieved and there is no singularity at the terminal time.
 
 If such situations may present interests in themselves, we believe that from a modeling perspective, they are not the most interesting ones as the problem of the controller does not get harder as the remaining time shortens. It is not even cleat it depends on time. We do not detail it too much but in such situations the randomness of the final target somehow disappear as the we can just wait for the final time to reach instantly the final target.
 
 \subsubsection{The case of bounded controls}
 Somehow opposite to the previous situation is the case in which $L$ is given by
 \[
 L(x,\alpha):= \begin{cases} 0 \text{ if } |\alpha| \leq 1, \\ + \infty \text{ else.} \end{cases}
   \]
 In this situation, the constrained optimization problem is not necessary controllable and the associated value can be infinite for $t \in [0,T)$. Indeed consider for instance $U_{det}(T-\epsilon,\delta_x,\delta_y)$ for $|x-y| > \epsilon$. If such cases present a lot of interest in themselves, they do not in this case in which the final density is constrained. Furthermore, if we were to replace this constraint with a bounded terminal cost, then the study of the associated HJB equation would be rather classical and fall in the scope of the previous section.

 \subsubsection{Cost functionals which are powers of distances}
 We consider here the cases in which $L$ is given, for $k \geq 1$, by 
 \be\label{power}
L(x,\alpha) := |\alpha|^k.
 \ee
 A simple change of variable yields that in this situation,
 \be\label{eqUt}
 U_{det}(t,µ,\nu) = \frac{U_{det}(T-1,\mu,\nu)}{(T-t)^{k-1}} = \frac{W_k(µ,\nu)^k}{(T-t)^{k-1}}.
 \ee
This type of behaviour is the one we are interested in, hence we shall make assumptions to control the cost function $L$ with powers of $\alpha$. Furthermore, in view of Alfonsi and Jourdain \citep{alfonsi} (which focuses on the case $k =2$), such a function $U$ is not smooth in neither $\mu$ nor $\nu$. This justifies in particular the use of the notion of viscosity solutions introduced in Section \ref{sec:hjb}.

 \subsection{Controllability of the stochastic problems and $L^{\infty}$ bounds of the value functions}
 We now provide, by means of controllability bounds, estimates on the value functions for stochastic optimal transport problems, near the final time $t = T$. In the previous section, we recalled that, as soon as the cost $L$ satisfies for some $ k \geq 1, C > 0$ and for all $x\in \T^d, p \in \R^d$
 \be\label{eq:1130}
 0\leq L(x,\alpha) \leq C (1 + |\alpha|^k),
 \ee
 then the value of the deterministic problem is bounded. In this section, we explain how we can compare the value function $U$ defined in \eqref{defUgood} with the value function $U_{det}$ of the deterministic problem, defined in \eqref{defUdet}. Define the function $\omega : [0,T) \to \R$ by
 \be\label{defomega}
 \omega(t) = \sup_{s\leq t,µ,\nu}U_{det}(s,µ,\nu).
 \ee
 In the two cases that follow, we are going to make the following assumption on $L$.
 
\begin{hyp}\label{hyp:l}
For all $t \in [0,T), \omega(t) < \infty$.
\end{hyp}
\begin{Rem}
Note that the assumptions on $L$ are here made in the previous (mild) Hypothesis, which is for instance satisfied if \eqref{eq:1130} holds.
\end{Rem}
 \subsubsection{The case of jumps}
Assume that the target process $(\nu_t)_{t \geq 0}$ is driven by jumps, which happen at Poisson times associated with the intensity $\lambda :[0,T]\to \R_+$, and which are described by the operator $\mathcal{T}:\mptd \to \mptd$. We can prove the following.
 
 \begin{Prop}\label{prop:behavjumps}
Under Hypothesis \ref{hyp:l}, assume that there exist $C >0$ and $\gamma > -1$ such that for $T-t \leq C^{-1}$
\be\label{hypgrowth}
 \lambda(t)\omega(t) \leq C (T-t)^{\gamma},
\ee
 and
\be\label{hyplambdatec}
 \lambda(t) \leq C (T-t)^{-1}\int_t^T\lambda.
\ee
Then there exists a continuous function $\beta : [0,T] \to \mathbb{R}_+$, such that
 \[
 \sup_{µ,\nu\in \mptd}|U(t,µ,\nu) - U_{det}(t,µ,\nu)| \leq \beta(t) \underset{t \to T}{\longrightarrow} 0.
 \]
 \end{Prop}
 \begin{Rem}
We comment the hypotheses of the result.
\begin{itemize}
\item The assumption \eqref{hyplambdatec} is purely technical, it is verified by any function such that $\lambda(t) \sim C (T-t)^{\alpha}$ for any $C> 0, \alpha > -1$. However it is not automatically verified, as for instance $\lambda(t) = \frac{d}{dt}(e^{-(T-t)^{-2}})$ does not verify it. We do not know wether this can be removed.
\item The requirement \eqref{hypgrowth} is quite important in our proof. This assumption yields an integrability condition on the product $\lambda \omega$. Such an integrability condition is crucial. Note for example that if $\lambda$ is constant, then we require (among other things), that $\omega \in L^1_{loc}$, which is not the case for a quadratic cost. We show an example of a situation where bounds on $U$ does not exist if this integrability fails. 
\end{itemize}
 \end{Rem}
 \begin{Rem}
 Note that no assumption is made on $\mathcal{T}$ in this result, in particular, the result still holds true if $\mathcal{T}$ depends also on $t$ and $µ$. This is due to the fact that $\mptd$ is compact. If the problem were to transport elements of $\mathcal{P}(\R^d)$, then some assumptions should be made on $\mathcal{T}$, namely on its growth.
 \end{Rem}

 \begin{proof}
 We argue first as if the infimum in the deterministic problem is always reached. Notice first that if $\lambda = 0$ in $L^1((0,\kappa),\R_+)$ for some $\kappa > 0$, then the results holds true trivially. Hence, we focus here on the case 
 \[
 \forall t > 0, \int_0^t \lambda(s)ds > 0.
 \]
Consider the problem starting in $µ \in \mptd$ at time $t$ and where the target process is equal to $\nu$ at $t$. Let $n$ be the (random) number of jumps in $[t,T]$ and consider the sequence $\tau_0 = t < \tau_1 < \tau_2 < ... < \tau_n \leq T$ of random times at which the target process jumps. Note that this sequence is finite almost surely, possibly empty and that the event $\{\tau_n = T\}$ shall be ignored since it happens with probability $0$. Consider the random pais $(\psi_s,m_s)_{s \in [t,T]}$, given by
 \begin{itemize}
 \item For $1\leq i \leq n$, $s \in (t_{i-1}, t_i)$, $(\psi_s,m_s)$ is equal (up to a change of time) to a minimum in the problem $U_{det}(t_{i-1},m_{t_{i-1}},\mathcal{T}^{i-1}\nu)$.
 \item For $s \in (t_n,T)$, $(\psi_s,m_s)$ is given through a minimum in $U_{det}(t_n,m_{t_n},\mathcal{T}^n\nu)$.
 \end{itemize}
 Such a pair is clearly admissible. We now estimate its cost.
 \[
 \begin{aligned}
& \mathbb{E}\left[ \int_t^T \int_{\mathbb{T}^d\times \R^d}L(x,z)\psi_s(x,dz)m_s(dx)ds\right]\\
 &= \mathbb{P}(n > 0)\mathbb{E}\bigg[   \int_{t_n}^T\int_{\T^d\times \R^d}L(x,z)\psi_s(x,dz)µ_s(dx)ds \\
 &\quad \quad \quad \quad \quad + \sum_{i = 0}^{n-1} \int_{t_{i}}^{t_{i+1}} \int_{\mathbb{T}^d\times \R^d}L(x,z)\psi_s(x,dz)µ_s(dx)ds \bigg| n > 0\bigg] +  \mathbb{P}(n= 0)U_{det}(t,µ,\nu)\\
 & \leq \mathbb{P}(n > 0)\mathbb{E}\left[U_{det}(t_n, m_{t_n}, \mathcal{T}^n \nu)+  \sum_{i=0}^{n-1} U_{det}(t_i,m_{t_i},\mathcal{T}^i \nu) \bigg| n > 0\right] + \mathbb{P}(n= 0)U_{det}(t,µ,\nu)\\
 & \leq \mathbb{P}(n > 0)\mathbb{E}\left[ \omega(t_n) + \sum_{i=0}^{n-1} \omega(t_i)\bigg| n > 0\right] +  \mathbb{P}(n= 0)U_{det}(t,µ,\nu)\\
 & \leq \mathbb{P}(n> 0)\mathbb{E}\left[ (1 + n)\omega(t_n) | n > 0\right] + \mathbb{P}(n= 0)U_{det}(t,µ,\nu).
 \end{aligned}
 \]
 We can now compute
 \[
 \begin{aligned}
 \mathbb{E}\left[ (1 + n)\omega(t_n)| n > 0\right] = \sum_{k = 1}^{\infty}\mathbb{E}[(1+n)\omega(t_n)|n = k]\mathbb{P}( n = k| n > 0).
 \end{aligned}
 \]
 Since the $(\tau_n)_{n \geq 0}$ are given by a Poisson process, we have that
 \[
 \mathbb{P}(n > 0) \mathbb{P}(n = k | n > 0) = \mathbb{P}(n = k) = \frac{\left(\int_t^T\lambda(s)ds\right)^k}{k!}e^{-\int_t^T\lambda(s)ds},
 \]
 and also that there exists $C >0$ such that for any $1\leq k < n$, the law of $\tau_k$ conditioned on $\tau_{k-1}$ has a density which is bounded by 
 \[
s \to \mathbb{1}_{s \geq \tau_{k-1}} C\frac{\lambda(s)}{\int_{\tau_{k-1}}^T\lambda}.
 \]
Hence, we can estimate
\[
\begin{aligned}
\mathbb{E}[(1+n)\omega(t_n)|n = k] &\leq C^{k} (k+1)\int_t^T\int_{t_1}^T\dots\int_{t_{k-1}}^T\omega(t_k)\lambda(t_k)\frac{dt_k}{\int_{t_{k-1}}^T\lambda}\dots \frac{\lambda(t_2)dt_2}{\int_{t_1}^T\lambda}\frac{\lambda(t_1)dt_1}{\int_t^T\lambda}\\
& \leq C^k(k+1)\int_t^T\int_{t_1}^T\dots\int_{t_{k-2}}^T(T-t_{k-1})^{\gamma +1}\frac{\lambda(t_{k-1})dt_{k-1}}{\int_{t_{k-1}}^T\lambda}\dots \frac{\lambda(t_2)dt_2}{\int_{t_1}^T\lambda}\frac{\lambda(t_1)dt_1}{\int_t^T\lambda}\\
& \leq \frac{C^{k}(k+1)(T-t)^{\gamma +1}}{\int_t^T\lambda(s)ds} 
\end{aligned}
\]
From the previous estimate, we deduce that
\[
\begin{aligned}
\mathbb{P}(n> 0)\mathbb{E}\left[ (1 + n)\omega(t_n) | n > 0\right] &\leq C\sum_{k = 1}^{\infty}\frac{C^k(k+1)}{k!} e^{-\int_t^T\lambda}\left(\int_t^T\lambda\right)^{k-1}(T-t)^{\gamma +1}\\
& \leq C (T-t)^{ \gamma+1}.
\end{aligned}
\]
We can compute
\[
(1 - \mathbb{P}(n = 0))U_{\det}(t,µ,\nu) \leq C \int_t^T\omega(s)\lambda(s)ds \leq C (T-t)^{ \gamma+1}.
\]
Hence, setting $\beta(t) = C (T-t)t^{\gamma+1}$, we deduce that 
\[
U(t,µ,\nu) \leq \mathbb{E}\left[ \int_t^T \int_{\mathbb{T}^d\times\R^d}L(x,z)\psi_s(x,dz)m_s(dx)ds\right] \leq U_{det}(t,µ,\nu) + \beta(t).
\]

 Obtaining the lower bound is easier. Indeed, for $\epsilon > 0$, consider an $\epsilon$ optimal pair $(\psi,m)$ (which exists since the value is bounded from below). It then follows that
 \[
 \begin{aligned}
 U(t,µ,\nu) &\geq \mathbb{E}\left[ \int_t^T \int_{\mathbb{T}^d\times \R^d}L(x,z)\psi_s(x,dz)m_s(dx)ds\right] - \epsilon\\
 &= \mathbb{P}(n = 0) \mathbb{E}\left[ \int_t^T \int_{\mathbb{T}^d\times \R^d}L(x,z)\psi_s(x,dz)m_s(dx)ds \bigg| n = 0\right] - \epsilon \\
 & \quad + \mathbb{P}(n > 0)\mathbb{E}\left[ \int_t^T \int_{\mathbb{T}^d\times \R^d}L(x,z)\psi_s(x,dz)m_s(dx)ds \bigg| n > 0\right]\\
 & \geq \mathbb{P}(n = 0) \mathbb{E}\left[ \int_t^T \int_{\mathbb{T}^d\times \R^d}L(x,z)\psi_s(x,dz)m_s(dx)ds \bigg| n = 0\right] - \epsilon\\
 & \geq  \mathbb{P}(n = 0)U_{det}(t,µ,\nu) - \epsilon.
 \end{aligned}
 \]
 Since $\epsilon$ is arbitrary, we deduce that the inequality also holds for $\epsilon = 0$. Hence, we deduce the lower bound
 \[
 U(t,µ,\nu) \geq U_{det}(t,µ,\nu) - \beta(t)
 \]
 following the same computation as in the part concerning the upper bound.\\
 
 We end the proof by remarking that if we are not able to consider optimal control for the deterministic problem, then considering appropriate $\epsilon'$ optimal controls yields the required estimates.
  \end{proof}

 The previous result yields in fact more than just bounds on the value function $U$. It gives a precise behaviour of the value function near $t = T$. It states that, under the standing assumptions, it behaves as $U_{det}$ near $t = T$.
 
  If the assumptions of the previous Theorem are not satisfied, then we can be in an entirely different situation. Indeed, consider the following example.
 \begin{Ex}
Assume $L(x,\alpha) = |\alpha|^2$, $\lambda$ is a constant and $\mathcal{T}\nu \ne \nu$ for some $\nu \in \mptd$. In this context, $\omega(t) = C(T-t)^{-1}$. Consider a time $t > 0$ and assume that at this time, the target process is equal to $\nu$.
By conditioning on the number of jumps occurring in the remaining time, we obtain that for any admissible pair $(\psi,m)$
\[
\begin{aligned}
U(t,µ,\nu) \geq& \mathbb{P}(n = 1)\mathbb{E}\left[\int_t^T \int_{\mathbb{T}^d\times \R^d} |z|^2\psi_s(x,dz)m_s(dx)ds \bigg| n =1\right]\\
& +\mathbb{P}(n = 0) \mathbb{E}\left[\int_t^T \int_{\mathbb{T}^d\times \R^d} |z|^2\psi_s(x,dz)m_s(dx)ds \bigg| n =0\right] .
\end{aligned}
\]
Let us denote by $\rho$ the density of the law of the jump $\tau_1$, conditioned on $\{n = 1\}$. Consider $(µ_s)_{s \in [t,T]}$, the trajectory in the event $\{n = 0\}$. By definition of the $2$-Wasserstein distance, we obtain that for any $t' \in [t,T)$
\be\label{eq1395}
\ba
 \mathbb{E}\left[\int_t^T \int_{\mathbb{T}^d\times \R^d} |z|^2\psi_s(x,dz)m_s(dx)ds \bigg| n =0\right]  &\geq  \mathbb{E}\left[\int_{t'}^T \int_{\mathbb{T}^d\times \R^d} |z|^2\psi_s(x,dz)m_s(dx)ds \bigg| n =0\right] \\
 &\geq\frac{W_2^2(µ_{t'},\nu)}{T-t'} 
 \ea
\ee
 We then compute
\[
 \mathbb{E}\left[\int_t^T \int_{\mathbb{T}^d\times \R^d} |z|^2\psi_s(x,dz)m_s(dx)ds \bigg| n =1\right] \geq \int_t^T\frac{W_2^2(µ_s, \mathcal{T}\nu)}{T-s}\rho(s)ds
\]
Integrating \eqref{eq1395} with respect to $\rho$, we deduce that
\[
\begin{aligned}
U(t,µ,\nu) \geq& \mathbb{P}(n = 0)\int_t^T\frac{W_2^2(µ_{s},\nu)}{T-s}\rho(s)ds\\
& +\mathbb{P}(n = 1)\int_t^T\frac{W_2^2(µ_{s},\mathcal{T}\nu)}{T-s} \rho(s)ds\\
\geq &\mathbb{P}(n = 1)\int_t^T\frac{W_2^2(µ_s,\nu) + W_2^2(µ_{s},\mathcal{T}\nu)}{T-s} \rho(s)ds,
\end{aligned}
\]
if $t$ is sufficiently small so that $\mathbb{P}(n= 1) \leq \mathbb{P}(n=0)$. The right hand side of the previous inequality is equal to $+ \infty$ since $\nu \ne \mathcal{T}\nu$. Hence for any $µ \in \mptd$, $U(t,µ,\nu) = + \infty$.
 \end{Ex}

 This last example hints that there is a strong dichotomy : either $U$ is infinite in all the points $\nu$ such that $\mathcal{T}\nu \ne \nu$, or either it behaves quite similarly as $U_{det}$.

 \subsubsection{The case of the target pushed by a diffusion}
Consider now that $(\nu_t)_{t \geq 0}$ is given by $\nu_t = (\tau_{W_t})_{\#}\nu$ for $\nu \in \mptd$ and $(W_t)_{t \geq 0}$ the strong solution of
\[
dW_t = \sigma(t)dB_t \text{ for } t > 0,
\]
with initial condition $W_0 = 0$, where $\sigma : [0,T)\to \R$ is a smooth bounded function and $(B_t)_{t \geq 0}$ is a standard Brownian motion. We also assume that there exists $C > 0$ such that for all $x\in \T^d, \alpha \in \R^d$
\[
L(x,\alpha) \leq C(1 + |\alpha|^2).
\]
Also recall that we are here interested in the value function $U$ as a function of $t,µ$ and $w\in \T^d$.

We start with the following Lemma.
\begin{Lemma}\label{lemmacompw}
Almost surely, there exists a unique $\T^d$ valued solution $(X_t)_{t \leq T}$ of
\be\label{eqXlemma}
dX_t = \frac{W_t - X_t}{T-t}dt,
\ee
given an initial condition $X_0 \in \T^d$. Almost surely, it satisfies $X_t \to W_T$ as $t \to T$.

Moreover, 
\[
\mathbb{E}\left[\int_0^T\left| \frac{dX_t}{dt}\right|^2ds\right] = \frac{\mathbb{E}[|W_0-X_0|^2]}{T}+ \int_0^T\frac{\sigma(s)^2}{T-s}ds,
\]
which possibly reads $+\infty = + \infty$.
\end{Lemma}
\begin{proof}
Let us first remark that the existence and uniqueness of the solution on $[0,T)$ is trivial. Hence we only need to show that the the limit holds as $t \to T$. Remark now that \eqref{eqXlemma} can be written
\[
dX_t = \frac{W_T - X_t}{T-t}dt + \frac{W_t - W_T}{T-t}dt.
\]
The previous leads to
\[
\frac{d |X_t-W_T|^2}{dt} = -2 \frac{|X_t-W_T|^2}{T-t} + 2\frac{(W_t - W_T)\cdot(X_t - W_T)}{T-t}.
\]
Integrating this relation yields
\[
|X_t-W_T|^2 + 2 \int_0^t\frac{|X_s - W_T|^2}{T-s}ds = 2\int_0^t\frac{W_t - W_T}{\sqrt{T-s}}\cdot\frac{X_s-W_T}{\sqrt{T-s}}ds + |X_0 - W_T|^2.
\]
From the regularity property of the Brownian motion, more precisely that, almost surely, for $t$ and $s$ sufficiently close,
\[
\forall c > 1, |B_t -B_s| \leq c \sqrt{2|t-s|\log(|t-s|^{-1})},
\]
we deduce that there exists $C> 0$ independent of $t$ such that, almost surely,
\[
|X_t-W_T|^2 + 2 \int_0^t\frac{|X_s - W_T|^2}{T-s}ds \leq C.
\]
Hence the first part of the results follows.\\

Let us now remark that 
\[
d\frac{W_t - X_t}{T-t} = \frac{dW_t}{T-t} = \frac{\sigma(t)dB_t}{T-t}.
\]
Hence, we deduce that
\be\label{eq:dxdt}
\frac{dX_t}{dt} = \frac{W_t - X_t}{T-t} = \frac{W_0-X_0}{T} + \int_0^t\frac{\sigma(s)}{T-s}dB_s.
\ee
From which follows
\[
\mathbb{E}\left[\int_0^T\left| \frac{dX_t}{dt}\right|^2ds\right] =  \frac{\mathbb{E}[|W_0-X_0|^2]}{T}+ \int_0^T\int_0^t\frac{\sigma(s)^2}{(T-s)^2}dsdt,
\]
which yields the result.
\end{proof}
 We can now prove a controllability estimate.
\begin{Prop}\label{prop:behavmb}
Assume that there exists $C \geq 0$ such that $L$ satisfies for any $x,\alpha$
\be\label{eq:growthL}
 L(x,\alpha) \leq C(1 + |\alpha|^2),
\ee
and that
\be\label{hyp:sigma}
\sigma(t) \sim K(T-t)^{\gamma} \text{ as } t \to T,
\ee
for some $K \ne 0,\gamma > 0$. Then for all $t < T, µ \in\mptd, w \in \T^d$, $U(t,µ,w) < \infty$.
\end{Prop}
\begin{proof}
Consider $(t,µ,w) \in (0,\infty)\times \mptd \times \mathbb{T}^d.$ Take $\delta > 0$ such that $t + \delta < T$ and define $T_0 = T -( t + \delta)$. Denote by $(W_s)_{s \geq t}$ the strong solution of \eqref{sde} such that $W_t = w$. Consider the control $\alpha$ which transports optimally, according to the quadratic cost, $µ$ into $\nu$ in a time $\delta$. Consider the process $(X_s)_{s \geq t + \delta}$ defined by
\[
dX_s = \frac{W_s - X_s}{T-s}ds,
\]
with initial condition $X_{t+ \delta} = 0$. We can now build an admissible control by setting $\psi_s(x,dz) = \delta_{X_s}$.
Thanks to Lemma \ref{lemmacompw}, this control is admissible. Moreover, thanks to the same result, we have the trivial estimate
\[
U(t,µ,w) \leq C(T-t) + C\frac{W_2^2(µ,\nu)}{\delta} + C  \frac{\mathbb{E}[|w_{t +\delta}|^2]}{T_0}+ \int_{t + \delta}^{T}\frac{\sigma(s)^2}{T-s}ds,
\]
which is finite thanks to \eqref{hyp:sigma}.
\end{proof}
We can easily extend the previous result (with a different assumption on $\gamma$) to more general cost functionals $L$.
\begin{Cor}\label{cor:sigmak}
Assume that there exists $C \geq 0, K \ne 0, \gamma > \frac 12$ and $k \geq 1$ such that for any $x,\alpha$,
\[
 L(x,\alpha) \leq C(1 + |\alpha|^k),
\]
\[
\sigma(t) \sim K(T-t)^{\gamma} \text{ as } t \to T.
\]
Then for all $t < T, µ \in\mptd, w \in \T^d$, $U(t,µ,w) < \infty$.
\end{Cor}
\begin{proof}
The argument is similar to the previous one. Consider $(X_t)_{t \geq 0}$ given in \eqref{lemmacompw}. Observe that \eqref{eq:dxdt} still holds. Then, thanks to the Burkholder-Davis-Gundy inequality, we obtain that
\[
\mathbb{E}\left[\int_0^T\left| \frac{dX_t}{dt}\right|^kds\right] =  C\frac{\mathbb{E}[|W_0-X_0|^k]}{T^{k-1}}+ C\int_0^T\frac{\sigma(t)^2}{(T-t)^2}dt.
\]
Hence, the result follows from the same argument as in Proposition \ref{prop:behavmb}.
\end{proof}

We now prove a refinement of the previous estimate which yields a more precise result for the behaviour of $U$ near $t= T$.
\begin{Prop}\label{prop:1532}
Under Hypothesis \ref{hyp:l} and the assumptions and notations of Proposition \ref{prop:behavmb}, assume furthermore that $\gamma > \frac 12$, and that for any $t < s < T$
\be\label{hypdt}
U_{det}(s,µ,\nu) - U_{det}(t,µ,\nu) \leq (T-s)^{-2}(s-t).
\ee
Then, it holds that
\[
U(t,µ,w) \leq U_{det}(t,µ,(\tau_w)_{\#}\nu) + \beta(t),
\]
for a continuous function $\beta$ such that $\beta(t) \to 0$ as $t \to T$. Moreover, if $L$ is convex in $\alpha$ we always have
\[
U_{det}(t,µ,(\tau_w)_{\#}\nu) \leq U(t,µ,w).
\]
\end{Prop}
\begin{proof}
The proof is similar to the previous one. Consider  $(t,µ,w) \in (0,\infty)\times \mptd \times \mathbb{T}^d.$ Take $\delta(t) $ such that $t + \delta(t) < T$ and define $T_0(t) = T - (t + \delta(t))$. The function $\delta$ is to be chosen later on.

 Denote by $(W_s)_{s \geq t}$ the strong solution of \eqref{sde} such that $W_t = w$. Consider an optimal trajectory, according to the deterministic problem, which transports $µ$ into $(\tau_w)_{\#}\nu$ in a time $\delta(t)$. Consider the process $(X_s)_{s \geq t + \delta(t)}$ defined by
\[
dX_s = \frac{W_s - X_s}{T-s}ds,
\]
with initial condition $X_{t+ \delta(t)} = w$. Consider now the control which consists in playing the first trajectory in the time $\delta(t)$ and then to translate the state with speed $\frac{dX_s}{ds}$ afterwards. Thanks to Lemma \ref{lemmacompw}, this control is admissible. Moreover, thanks to the same result, we can estimate
\be\label{eq567}
U(t,µ,w) \leq U_{det}(T-\delta(t),µ, (\tau_w)_{\#}\nu) + C  \frac{\mathbb{E}[|w_{t+\delta(t)} - w|^2]}{T- (t+ \delta(t))}+ \int_{t + \delta(t)}^{T}\frac{\sigma(s)^2}{T-s}ds.
\ee
Remark that
\[
\mathbb{E}[|w_{t+\delta(t)} - w|^2] = \int_t^{t + \delta(t)}\sigma^2(s)ds \leq K(T-t)^{2\gamma}\delta(t),
\]
where the inequality holds for $t$ close to $T$. Let us now set $\delta(t) = (T-t) - (T-t)^{\theta}$ for some $\theta > 1$. Coming back to \eqref{eq567}, we obtain
\[
U(t,µ,w) \leq U_{det}(t + (T-t)^{\theta},µ, (\tau_w)_{\#}\nu) + C  \frac{K(T-t)^{2\gamma}\delta(t)}{(T-t)^{\theta}}+ \int_{t + \delta(t)}^{T}\frac{\sigma(s)^2}{T-s}ds
\]
The last term vanishes as soon as $\gamma > 0$, the second to last term vanishes as soon as $1 + 2\gamma > \theta$. It then remains to estimate the first term of the right hand side. Namely, we are interested in the difference
\[
U_{det}(t + (T-t)^{\theta},µ, (\tau_w)_{\#}\nu) - U_{det}(t,µ,(\tau_w)_{\#}\nu).
\]
From \eqref{hypdt}, we can bound this difference by $C(T-t)^{\theta - 2}$ and we deduce finally that
\[
U(t,µ,w) \leq U_{det}(t,µ, (\tau_w)_{\#}\nu) + C(T-t)^{\theta-2} +C  \frac{K(T-t)^{2\gamma}\delta(t)}{(T-t)^{\theta}}+ \int_{t + \delta(t)}^{T}\frac{\sigma(s)^2}{T-s}ds,
\]
which yields the required estimate when $\theta \in (2,1 + 2 \gamma)$.\\

Finally, let us remark that the estimate $U(t,µ,w) \geq U_{det}(t,µ,(\tau_w)_{\#}\nu)$ is simply a consequence of the convexity of $L$ in $\alpha$. Indeed, fix $t < T, µ \in \mptd, w \in \T^d$ and consider any (stochastic) admissible pair $(\psi,m)$. As we already mentioned at the beginning of this Section, we can always consider an associated pair of measures $(m,E)$ solution of 
\[
\partial_t m + \text{div}(E) = 0 \text{ in } (t,T)\times \T^d,
\]
which satisfies $m_t = µ,m_T = (\tau_{W_T})_{\#}\nu$. Defining $M_t = \mathbb{E}[m_t]$ and $K_t = \mathbb{E}[E_t]$, we obtain that
\[
\partial_t M + \text{div}(K) = 0 \text{ in } (t,T)\times \T^d,
\]
together with $M_t = µ, M_T = (\tau_w)_{\#}\nu$. From this we deduce that
\[
\ba
\mathbb{E}\left[\int_t^T \int_{\T^d\times \R^d}L(x,z)\psi_s(x,dz)m_s(dx)ds\right] &\geq \mathbb{E}\left[\int_t^T \int_{\T^d}L\left(x,\frac{E_s}{m_s}\right)m_s(dx)ds\right]\\
& \geq \int_t^T\int_{T^d}L\left(x,\frac{K_s}{M_s}\right)m_s(dx)ds\\
& \geq U_{det}(t,µ,(\tau_w)_{\#}\nu),
\ea
\]
from which the result follows.

\end{proof}
\begin{Rem}
Although it is not the main objective of the present work, it would be interesting to lower the assumption $\gamma > \frac 12$ into $\gamma >0$ so that this result is similar to the one we present for the case of jumps, namely that the controllability directly yields a precise behaviour of $U$ near $t = T$.
\end{Rem}

\begin{Rem}
The assumption \eqref{hypdt} is a bound on the time derivative of $U_{det}$. It is verified in the case $L(x,p) = |p|^2$ for instance. Since $L$ does not depend explicitly on $t$, it can be verified by a change of variable in time as soon as $D_{\alpha}L$ has linear growth in $|\alpha|$, see for instance the computation of the next section.
\end{Rem}
 \begin{Rem}\label{rem:BDG}
 This result can also be adapted to more general cost functions $L$ as in Corollary \ref{cor:sigmak}, once again by using the Burkholder-Davis-Gundy inequality.
 \end{Rem}

\section{Continuity of the value function and viscosity solutions properties}\label{sec:cont}
Before proving that the value functions studied in the previous section are indeed viscosity solutions of the associated HJB equations, we start by proving some continuity estimates on these value functions. We assume in this section that
\be\label{eq:hyppoly}
\exists k > 1, C >0, \forall x\in \T^d,\alpha \in \R^d, \quad 0 \leq L(x,\alpha) \leq C(1+|\alpha|^k).
\ee
This condition is enough to ensure that the value of the deterministic problem is finite everywhere on $\{t < T\}$.

\subsection{Continuity estimates}
The continuity estimates we are going to provide rely on the controllability of the problem. Namely our strategy of proof consists in remarking that, if the problem if sufficiently controllable, then, with closed initial conditions, we can reduce one case to the other. In this section, we focus on the value function associated to the evolution described in section \ref{sec:reduw}. We comment later on on the case of jumps. Thus the main of object at interest here is $U$ defined by
\be\label{defUw}
U(t,µ,w) = \inf_{(\psi,m) \in\mathcal{A}dm^{sto}(t,µ,w) }\mathbb{E}\left[ \int_t^T \int_{\T^d\times \R^d}L(x,z)\psi_s(x,dz)m_s(dx)ds \big| W_t = w \right],
\ee
where we recall that $(W_s)_{s \geq 0}$ is the strong solution of 
\[
dW_t = \sigma(t) dB_t,
\] 
with initial condition $0$ for $(B_t)_{t \geq 0}$ a standard Brownian motion on $(\Omega, \mathcal{A},\mathbb{P})$. Recall that the terminal constraint is given by $m_T = (\tau_{W_T})_{\#}\nu$. Thanks to Corollary \ref{cor:sigmak}, to ensure that $U$ is finite, in addition to \eqref{eq:hyppoly}, we also assume that
\be\label{eq:hypsigma}
\exists K \ne 0, \gamma > \frac 12, \quad \sigma(t) \sim_{t\sim T} K(T-t)^\gamma.
\ee

 The following results depend on various assumptions on the cost function $L$ that we state progressively to highlight the effect of each one of them.\\

We begin with the following Lemma which states that the set of controls which can be reduced by a density argument.
\begin{Lemma}\label{lemma:density}
For $t < T, µ , \nu \in \mptd$ and $\eps < T-t$, define 
\[
\mathcal{A}dm^{sto}_\eps(t,µ,w) := \{ (\psi,m) \in \mathcal{A}dm^{sto}(t,µ,w) | \forall s \leq t + \eps, \psi_s = \delta_0\}.
\]
Assume that \eqref{eq:hyppoly} and \eqref{eq:hypsigma} hold and that
\be\label{hyp:dpl}
\exists C > 0, \forall x,z, \quad |D_pL(x,z)||z|\leq C (1 + L(x,z)).
\ee
Then, $U$ is also given by
\be\label{defUeps}
U(t,µ,w) = \inf_{\eps > 0,(\psi,m) \in\mathcal{A}dm^{sto}_\eps(t,µ,\nu) }\mathbb{E}\left[ \int_t^T \int_{\T^d\times \R^d}L(x,z)\psi_t(x,dz)m_t(dx)dt \big| w_t = w \right].
\ee
Moreover, for all $\eps > 0$ there exists an $\eps$ optimal control in $\mathcal{A}dm^{sto}_{C\eps}(t,µ,w)$ with $C >0$ depending only $U(t,µ,w)$ and $T-t$.
\end{Lemma}
\begin{proof}
Of course, $U$ is always smaller than the right hand side of \eqref{defUeps}. We thus prove the reverse inequality. Consider $n \geq 1, \eps < T-t$ and a $n^{-1}$ optimal control $(\psi,m)$ for $U$. Using the same change of variable as in Section \ref{sec:costlinear}, we remark that $(\psi',m')$ defined for $s \geq t + \eps$ by 
 \[
 \ba
 \psi'_s = \frac{T-t}{T-t-\eps}Id_{\#}\psi_{\phi(s)}\\
m'_s = m_{\phi(s)},
 \ea
 \]
where $\phi(s) = \frac{T-t}{T-t-\eps}(s-t + \eps) + t$ and by $\psi'_s = 0, m'_s = µ$ for $s \in [t,t+ \eps]$ belongs to $\mathcal{A}dm_\eps(t,µ,\nu)$. We now evaluate
\[
\ba
\mathbb{E}\left[ \int_t^T \int_{\T^d\times \R^d}L(x,z)\psi'_s(x,dz)m'_s(dx)ds \right] &= \mathbb{E}\left[ \int_{t+\eps}^T \int_{\T^d\times \R^d}L\left(x,\phi'z\right)\psi_{\phi(s)}(x,dz)m_{\phi(s)}(dx)ds\right]\\
=& (\phi')^{-1}\mathbb{E}\left[ \int_{t}^T \int_{\T^d\times \R^d}L\left(x,\phi'z\right)\psi_{s}(x,dz)m_{s}(dx)ds\right]\\
 =& (\phi')^{-1}\mathbb{E}\left[ \int_{t}^T \int_{\T^d\times \R^d}L\left(x,z\right)\psi_{s}(x,dz)m_{s}(dx)ds\right] + \\
 + &(\phi')^{-1}\mathbb{E}\left[ \int_{t}^T \int_{\T^d\times \R^d}(L\left(x,\phi'z\right) - L(x,z))\psi_{s}(x,dz)m_{s}(dx)ds\right]\\
  \leq& (\phi')^{-1}(U(t,µ,w) +n^{-1} ) + \\
  &+ \frac{\phi' -1}{\phi'}C\mathbb{E}\left[ \int_{t}^T \int_{\T^d\times \R^d}( 1 +L(x,z))\psi_{s}(x,dz)m_{s}(dx)ds\right],
\ea
\]
where we have used \eqref{hyp:dpl} and the $n^{-1}$ optimality of $(\psi,m)$ in the last inequality. Since $\phi' \to 1$ as $\eps \to 0$, we thus on the first part of the claim. The second part can be observed simply by remarking that $$|\phi'- 1| \leq C \eps$$ as $\eps \to 0$, for $C$ depending only on $T-t$.
\end{proof}

We are now ready to prove the following.
\begin{Lemma}\label{lemma:contmu}
Assume that \eqref{eq:hyppoly}, \eqref{eq:hypsigma} and \eqref{hyp:dpl} hold. For any $t < T$ and $w \in \T^d$, there exists $C$ depending only on $\sup_{µ \in \mptd}U(t,\mu,w)$ and $T-t$ such that
\[
\forall µ,µ' \in \mptd, \quad|U(t,µ,w) - U(t,µ',w)|\leq C W_k(µ,µ').
\]
\end{Lemma}
\begin{proof}
Take $t < T, w \in \T^d$ and $µ,µ' \in \mptd$. We want to show that $U(t,µ',w) \to U(t,µ,w)$ as $µ' \to µ$. Thanks to Lemma \ref{lemma:density}, consider an $\eps$ optimal control $(\psi,m) \in \mathcal{A}^r_{C\eps}(t,µ,w)$ for $U(t,µ,w)$ for some $\eps > 0$. Consider now the pair $(\psi',m')$ defined by 
\[
(\psi'_s,m'_s) = (\psi_s,m_s) \text{ for } s \in [t+\eps,T),
\]
and $(\psi',m')_{s \in [t,t+ \eps]}$ corresponds to an optimal trajectory for the deterministic optimal transport of $µ'$ toward $µ$ in time $\eps$ for the cost $L(x,z) = |z|^k$. We can then estimate
\[
U(t,µ',w) \leq U(t,µ,w) + \eps + C^{1-k}( W_k^k(µ,µ')\eps^{1-k}). 
\]
Taking $\eps = W_k(µ,µ')$ yields the required result since $µ'$ and $µ$ are arbitrary in $\mptd$.
\end{proof}

We now show continuity of the value function with respect to $w$.
\begin{Lemma}
Assume that \eqref{eq:hyppoly}, \eqref{eq:hypsigma} and \eqref{hyp:dpl} hold and that
\be\label{hyp:dxl}
\exists C >0, \forall x, z, \quad |D_xL(x,z)| \leq C ( 1 + L(x,z)).
\ee
Then for all $t < T, µ\in \mptd$, there exists $C>0$ depending only on $\sup_{w \in \T^d}U(t,\mu,w)$ and $T-t$ such that
\[
\forall w,w' \in \mptd, \quad|U(t,µ,w) - U(t,µ,w')|\leq C |w-w'|.
\]
\end{Lemma} 
\begin{proof}
Take $t <T$, $µ \in \mptd$ and $w,w' \in \T^d$. Denote $\bar w = \frac{w- w'}{T-t} \in \R^d$. Take $\eps > 0$ and an $\eps$ optimal control $(\psi,m)$ for $U(t,µ,w)$. Consider the control $(\psi',m')$ defined by
\[
\ba
\psi'_s(x + (s-t)\bar w,dz) = (\tau_{\bar w})_{\#}\psi_s(x,dz), \quad x \in \T^d, t \leq s < T,\\
m'_s = (\tau_{(s -t)\bar w})_{\#}m_s, \quad t\leq s < T.
\ea
\]
In other words, $(\psi',m')$ corresponds to the same control as $(\psi,m)$ but with the fact that we are using in addition a uniform speed of $\bar w$. The control $(\psi',m')$ is admissible for $U(t,µ,w')$ and we can estimate
\[
\ba
U(t,µ,w') &\leq \mathbb{E}\left[ \int_{t}^T \int_{\T^d\times \R^d}L\left(x + (s-t)\bar w,\bar w + z\right)\psi_{s}(x,dz)m_{s}(dx)ds\right]\\
& \leq  \mathbb{E}\left[ \int_{t}^T \int_{\T^d\times \R^d}L\left(x + (s-t)\bar w,\bar w + z\right) - L(x,z)\psi_{s}(x,dz)m_{s}(dx)ds\right]\\
& \quad + U(t,µ,w) + \eps\\
& \leq  U(t,µ,w) + \eps + C \mathbb{E}\left[ \int_{t}^T \int_{\T^d\times \R^d}L(x,z)\psi_{s}(x,dz)m_{s}(dx)ds\right]\bar w.
\ea
\]
The result now follows by taking the limit $\epsilon \to 0$.
\end{proof}

Let us now remark that combining the proofs of the previous two lemmas, we arrive easily at the following.
\begin{Prop}\label{prop:continuitymuw}
Assume that \eqref{eq:hyppoly}, \eqref{eq:hypsigma}, \eqref{hyp:dpl} and \eqref{hyp:dxl} hold. Then, for all $t < T$, there exists $C >0$ depending only on $\sup_{µ\in \mptd,w \in \T^d}U(t,µ,w)$ and on $T-t$ such that
\[
\forall µ,µ' \in \mptd,w,w' \in \T^d, \quad|U(t,µ,w) - U(t,µ',w')|\leq C (W_k(µ,µ') + |w-w'|).
\]
\end{Prop}

We now show global continuity of the value function.

\begin{Prop}\label{prop:globalcont}
Assume that \eqref{eq:hyppoly}, \eqref{eq:hypsigma}, \eqref{hyp:dpl} and \eqref{hyp:dxl} hold. Then, for any $\eps > 0$, there exists $C> 0$ such that for all $0 \leq t,t' \leq T-\eps, µ,µ' \in \mptd, w,w' \in \T^d$,
\be\label{eq:globalcont}
|U(t,µ,w) - U(t',µ',w')| \leq C (\sqrt{|t-t'|} + W_k(µ,µ') + |w-w'|).
\ee
\end{Prop}
\begin{proof}
Consider $(t,µ,w), (t',µ',w') \in [0,T) \times \mptd \times \T^d$ and assume that $t' > t$. Considering a control for $U(t,µ,w)$ which does nothing in $[t,t']$, we obtain that
\[
U(t,µ,w) \leq \mathbb{E}[U(t',µ,W_{t'})].
\]
We deduce that
\[
U(t,µ,w) - U(t',µ,w) \leq  \mathbb{E}[U(t',µ,W_{t'}) - U(t',µ,w)],
\]
which implies
\be\label{eq:1582}
U(t,µ,w) - U(t',µ',w') \leq \mathbb{E}[U(t',µ,W_{t'}) - U(t',µ,w)] + C(W_k(µ,µ') + |w-w'|),
\ee
for some constant $C$ depending only $T - \max(t,t')$ thanks to Proposition \ref{prop:continuitymuw}.

 Thanks to Lemma \ref{lemma:density}, there exists a $C(t'-t)$ optimal control for $U(t,µ,w)$ which does nothing in the time interval $[t,t']$, for $C$ depending only on $U(t,µ,w)$. Observe that, $\mathbb{P}$ almost surely, such a control is admissible for the problem $U(t',µ,W_{t'})$ where $W_{t'} = w + \int_0^{t'}\sigma(s)dBs$. This remark leads to the estimate
 \[
 U(t,µ,w) \geq \mathbb{E}[U(t',µ,W_{t'})] - C(t'-t).
 \]
Arguing as above, this leads to
\[
U(t,µ,w) - U(t',µ',w') \geq - C(t'-t)+ \mathbb{E}[U(t',µ,W_{t'}) - U(t',µ,w)] - C(W_k(µ,µ') + |w-w'|),
\]
Recalling \eqref{eq:1582} and the Lipschitz continuity of $U$ in $w$, we finally obtain \eqref{eq:globalcont}.
\end{proof}
\begin{Rem}
When the target process is given by a jump process, assumptions on the operator $\mathcal{T}$ have to be made in order to establish continuity of the value function with respect to $\nu$. Nonetheless, such a regularity is less important in this case as, $\mathbb{P}$-almost surely, the trajectory $(\nu_s)_{s \geq 0}$ only takes a finite number of values.
\end{Rem}

\subsection{Viscosity solution properties of the value functions}
 We now prove that the value functions at interest are indeed viscosity solutions of the corresponding HJB equations. Since they are mainly three value functions at interest (deterministic, case of jump and the case of the diffusion), we do not provide three complete proofs, but rather establish the property of viscosity sub-solution in one case and the property of viscosity super-solution in another one. We comment on the remaining cases at the end of the section and leave to the interested reader the adaptations of the proofs we provide.\\
 
 We start by proving that the value function associated to the case of Section \ref{sec:reduw} is a sub-solution of the HJB equation \eqref{hjbw}. 
 \begin{Prop}
Assume that \eqref{eq:hyppoly}, \eqref{eq:hypsigma}, \eqref{hyp:dpl} and \eqref{hyp:dxl} hold. Then, $U$ defined in \eqref{defUw} is a viscosity sub-solution of \eqref{hjbw}.
 \end{Prop}
 \begin{proof}
Take $(t,µ,w) \in [0,T)\times \mptd\times \T^d$ and $(\theta,\psi,p,X) \in J^+U(t,µ,w)$. In all the proof that follows, to lighten notation, it is always assumed that all expectation with respect to the probability space $(\Omega,\mathcal{A},\mathbb{P})$ are taken conditionally on $\{W_t =w\}$. Consider an atomeless probabilistic space $(\Omega',\mathcal{A}',\mathbb{P}')$ and a couple $(X_t,Z)$ of random variables on $\Omega'$ such that $\mathcal{L}(X_t,Z)(dx,dz) = m_t(dx)\psi^*(x,dz)$. Consider the ordinary differential equation 
 \[
 dX_s = -D_pH(X_s,Z)ds \quad \text{ for } s \in (t,T),
 \]
 with initial condition $X_t$ at time $s = t$, which considered valued in $\T^d$. Take $\kappa \in (0,T-t)$ and for $s \in [t,\kappa]$, we introduce the disintegration $\mathcal{L}(X_s,Z) = m_s(dx)\phi_s(x,dz)$. Using the control $(m_s,\phi_s)$ in the time interval $[t,t + \kappa]$ leads to
 \[
 U(t,µ,w) \leq \mathbb{E}_{\mathbb{P}}\left[\int_t^{t+ \kappa}\int_{\T^d\times \R^d}L(x,-D_pH(x,z))\phi_s(x,dz)m_s(dx)ds + U(t+ \kappa,m_{t + \kappa},W_{t + \kappa})\right].
 \]
 Let us insist upon the fact that the sort of dynamic programming principle (DDP) we used here is standard thanks to the regularity given by Proposition \ref{prop:globalcont}. We refer to Fleming and Soner \citep{flemingsoner} for a standard presentation of DDP in finite dimension, to Claisse et al. \citep{claisse} for a more precise discussion on the topic and to Djete et al. \citep{djete} for a discussion in the case of functions of probability measures. 
 
 Using $(\theta,\psi,p,X) \in J^+U(t,µ,w)$ we obtain 
 \[
 \ba
 0 \leq &\mathbb{E}_{\mathbb{P}}\left[\int_t^{t+ \kappa}\int_{\T^d\times \R^d}L(x,-D_pH(x,z))\phi_s(x,dz)m_s(dx)ds + \mathbb{E}_{\mathbb{P}'}[Z\cdot(X_{t+\kappa}-X_t)]\right]\\
 &+ (\theta + \sigma^2(t)Tr(X))\kappa + \kappa\omega(\kappa),
 \ea
 \]
 where $\omega$ is a real continuous function such that $\omega(0) = 0$. Remark now that we can remove the expectation with respect to $(\Omega,\mathbb{P})$. Using the link between $L$ and $H$ and the definition of $\phi_s$, we obtain
  \[
 \ba
 0 \leq &\int_t^{t+ \kappa}\int_{\T^d\times \R^d}-H(x,z)\phi_s(x,dz)m_s(dx)ds+ \int_t^{t+\kappa}\mathbb{E}_{\mathbb{P}'}[Z\cdot D_pH(X_s,Z)]ds + \mathbb{E}_{\mathbb{P}'}[Z\cdot(X_{t+\kappa}-X_t)]\\
 &+ (\theta + \sigma^2(t)Tr(X))\kappa + \kappa\omega(\kappa),
 \ea
 \]
 Simplifying and rewriting the first integral in terms of $X_s$ and $Z$, we obtain
 \[
  0 \leq \int_t^{t+ \kappa}\mathbb{E}_{\mathbb{P}'}[-H(X_s,Z)]ds+ (\theta + \sigma^2(t)Tr(X))\kappa + \kappa\omega(\kappa).
 \]
 Dividing by $\kappa$ and taking the limit $\kappa \to 0$ yields the required result since $(X_s)_{s > t}$ converges uniformly, $\mathbb{P}'$-almost surely, toward $X_t$ as $s \to t$ because $Z$ is bounded.
 \end{proof}

We now pass to the more technical property of viscosity super-solution, in the case of the value of the deterministic problem. Hence, in the rest of this section, we fix $\nu \in \mptd$ and we denote $U(t,µ)=U_{det}(t,µ,\nu)$ for $U_{det}$ defined by \eqref{defUdet}. Take $\eps > 0$ and define, for $K > 0$, $(t,µ) \in [0,T-\eps]\times \mptd$
\be\label{defUK}
U_K(t,µ) = \inf_{(\psi,m)} \left\{\int_t^{T-\eps}\int_{\T^d\times \R^d}L(x,z)\psi_s(x,dz)m_s(dx)ds + U(T-\eps,m_{T-\eps})\right\},
\ee
where the infimum is taken over controls such that $\psi$ is, uniformly in $x$, supported in the ball $B(0,K)$. Note that by looking at the problem on $[0,T-\eps]$ with bounded terminal condition $U(T-\eps,\cdot)$ we do not have to worry about controllability issues related to the terminal constraint. Moreover, since the terminal condition $U(T-\eps,\cdot)$ is Lipschitz continuous, it follows easily that for any $t < T-\eps$ and $µ\in \mptd$, $\lim_{K \to \infty} U_K(t,µ) = U(t,µ)$.

Our strategy is the following: show first that for any $K>0$, $U_K$ is a viscosity super-solution of \eqref{hjb}, then prove that $U = \inf_{K >0}U_K$ is also a viscosity super-solution of \eqref{hjb}. Hence we start with the following.
\begin{Lemma}
Assume that \eqref{eq:hyppoly}, \eqref{hyp:dpl} and \eqref{hyp:dxl} hold. Then, for any $K > 0$, $U_K$ is a viscosity super-solution of the equation \eqref{hjb} on $[0,T-\eps)\times \mptd$.
\end{Lemma}
\begin{proof}
We start by remarking that, since $µ \to U(T-\eps,µ)$ is Lipschitz continuous, it follows that $U_K$ is continuous on $[0,T-\eps]\times \mptd$. For any $(\psi,m) \in \mathcal{A}dm_K$, because the support of $\psi$ is bounded, thanks to representation theorems such as Theorem 2.1 in Jimenez et al. \citep{jimenez2022dynamical}, we obtain that there exists a $\T^d$ valued random process $(X^\psi_s)_{s \in [t_n,T]}$ on an atomeless probabilistic space $(\Omega',\mathcal{A}',\mathbb{P}')$, such that, for all $s\in [t,T-\eps]$ $\mathcal{L}(X^\psi_s) = m_s$ and for almost every $s \in [t,T-\eps]$,
\[
dX^\psi_s = \int_{\R^d}z\psi_s(X^\psi_s,dz)ds.
\]
Consider now $(\theta,\psi^*) \in \partial^-U_K(t,µ)$ for some $(t,µ) \in [0,T-\eps)\times \mptd$.

Assume in a first time that there exists $(X,Z)$ on $\Omega'$ such that $\mathcal{L}(X,Z) = µ(dx)\psi^*(x,dz)$ and for any $(\psi,m)$ admissible controls, $X^\psi_t = X$.
Observe that for any $\kappa \in (0,T-\eps-t)$
\[
U_K(t,µ) = \inf_{(\psi,m)}\left\{\int_t^{t + \kappa}\int_{\T^d\times \R^d}L(x,z)\psi_s(x,dz)m_s(dx)ds + U_K(t + \kappa,m_{t + \kappa})\right\}.
\]
This yields, using $(\theta,\psi^*) \in \partial^-U_K(t,µ)$,
\[
\ba
0 \geq \inf_{(\psi,m)} \bigg\{&\int_t^{t + \kappa}\int_{\T^d\times \R^d}L(x,z)\psi_s(x,dz)m_s(dx)ds + \mathbb{E}_{\mathbb{P}'}[Z\cdot(X^\psi_{t+ \kappa}-X)]\\
& + \theta \kappa + (\kappa + \|X^\psi_{t + \kappa} - X_t\|_\infty)\omega(\kappa + \|X^\psi_{t + \kappa} - X_t\|_\infty) \bigg\},
\ea
 \]
where $\omega$ is a real continuous function such that $\omega(0) = 0$ which does not depend on $(\psi,m)$. We then obtain
\[
\ba
0 \geq \inf_{(\psi,m)} \bigg\{&\int_t^{t + \kappa}\mathbb{E}_{\mathbb{P}'}[\int_{ \R^d}L(X_s^\psi,z) + Z\cdot(z\psi_s(X^\psi_s,dz))]ds\\
& + \theta \kappa + (\kappa + \|X^\psi_{t + \kappa} - X_t\|_\infty)\omega(\kappa + \|X^\psi_{t + \kappa} - X_t\|_\infty) \bigg\}.
\ea
 \]
Using the definition of $H$, we then obtain
\[
\ba
0 \geq \inf_{(\psi,m)} \bigg\{&\int_t^{t + \kappa}\mathbb{E}_{\mathbb{P}'}[-H(X_s^\psi,Z)]ds  + \theta \kappa + (\kappa + \|X^\psi_{t + \kappa} - X_t\|_\infty)\omega(\kappa + \|X^\psi_{t + \kappa} - X_t\|_\infty) \bigg\}.
\ea
 \]
 Remark now that thanks to the bound on the support of $\psi$, we know that $\|X^\psi_{t + \kappa} - X_t\|_\infty \leq K \kappa$. Hence, dividing by $\kappa$ and taking the limit $\kappa \to 0$ yields the required result, because $\omega$ does not depend on $(\psi,m)$.
 
 We now come back to the case in which we may not be able to consider such a couple $(X,Z)$ as we did above. Note that there exists $(X,Z)$ whose law is given by $\mathcal{L}(X,Z) = µ(dx)\psi^*(x,dz)$, but possibly $X_t^\psi \ne X$. Consider a minimizing sequence $(\psi^n,m^n)$ of the infimum. For any $n$, $\eta_n > 0$, thanks to classical results such as Lemma 5.23 in \citep{carmona2017probabilistic}, there exists $(X_n,Z_n)$ which has the same law as $(X,Z)$ and which is such that $\|X_n - X^{\psi^n}_t\|_\infty \leq \eta_n$. To extend the previous computations, it then suffices to take a sequence $\eta_n$ converging fast toward $0$ and a sequence $\kappa_n$ converging sufficiently slowly toward $0$, and to take the limit $n \to \infty$.
\end{proof}
We now produce an argument which is classical in the theory of viscosity solutions, and usually crucial to the so-called Perron's method: the infimum of super-solution of the HJB equation is itself a super-solution of the HJB equation.
\begin{Prop}
Assume that \eqref{eq:hyppoly}, \eqref{hyp:dpl} and \eqref{hyp:dxl} hold. Then, the value function $U$ is a super-solution of \eqref{hjb}.
\end{Prop}
\begin{proof}
The result holds if it holds on $(0,T-\eps)$ for arbitrary $\eps > 0$, hence we only prove the result in this latter case. The rest of the proof follows the line of Proposition 4.3 in \citep{crandall1992user}. Consider $(t,µ) \in (0,T-\eps)\times \mptd$ and $(\theta,\psi^*) \in \partial^-U(t,µ)$. For the moment we omit the effect of the dependence in the time variable and come back on this question later on. 

There exists a smooth function $w : (0,+\infty)\to (0,+\infty)$ such that $\lim_{x \to 0}\omega(x) = 0$ and for any $µ' \in \mptd$, $\gamma$ coupling between $µ'$ and $µ$,
\be\label{eq:1690}
U(µ') - U(µ) - \int_{\T^d\times \T^d\times \R^d}z\cdot(y-x)\gamma(dx,dy)\psi^*(x,dz)   \geq -c(\gamma)\omega(c(\gamma)),
\ee
where $c(\gamma)^2 = \int_{\T^{2d}}|x-y|^2\gamma(dx,dy)$. We now define $\gamma_K$ as the minimum of the function
\[
\gamma \to U_K((\pi_2)_\#\gamma) - \int_{\T^d\times \T^d\times \R^d}z\cdot(y-x)\gamma(dx,dy)\psi^*(x,dz) + 2 c(\gamma)\omega(c(\gamma)).
\]
over $\gamma \in \mathcal{P}(\T^{2d})$ such that $(\pi_1)\#\gamma = \mu$. Note $µ_k = (\pi_2)_\#\gamma_K$. By definition of $\gamma_K$, we obtain
\[
U_K(µ_K) - \int_{\T^d\times \T^d\times \R^d}z\cdot(y-x)\gamma_K(dx,dy)\psi^*(x,dz) + 2 c(\gamma_K)\omega(c(\gamma_K)) \leq U_K(µ) - U(µ) + U(µ),
\]
for $U_K$ defined in \eqref{defUK}. Recalling \eqref{eq:1690}, we obtain
\[
\ba
U_K(µ_K) + 2 c(\gamma_K)\omega(c(\gamma_K)) \leq U_K(µ) - U(µ) + U(µ_K) + c(\gamma_K)\omega(c(\gamma_K)).
\ea
\]
Hence, we deduce that in the limit $K \to \infty$, since $U_K(µ) \to U(µ)$, that $µ_K$ converges toward $µ$. The main interest of the previous is that the optimality of $\gamma_K$ yields that we can consider an element in the sub-differential of $U_K$ at $µ_K$ which converges toward $\psi^*$ as $K \to \infty$. To explain this fact in an understandable fashion, we pass by the formalism of random variables, although it is not necessary. Consider $(\Omega',\mathcal{A}',\mathbb{P}')$ an atomeless probabilistic space. Define $\tilde U_K(Y) = U_K(\mathcal{L}(Y))$ for $Y$ a $\T^d$ valued random variable over $\Omega'$. Consider a couple $(X_K,Y_K,Z_K)$ whose law is given by $\gamma_K(dx,dy)\psi^*(x,dz)$. Denote also $\tilde c(X,Y) = c (\mathcal{L}(X,Y))$. Remark that we have already proven $\tilde c(X_K,Y_K) \to 0$ as $K \to \infty$. Then by construction of $\gamma_K$, it follows that 
\[
Z_K + 2\frac{X_K-Y_K}{\tilde c(X_K,Y_K)}(\omega(\tilde c (X_K- Y_K)) + \omega'(\tilde c(X_K,Y_K))\tilde c(X_K,Y_K)) \in \partial^-\tilde U_K(Y_K).
\]
Denoting the previous element by $Z_K + \eta_K$, we observe that $(\eta_K)_{K > 0}$ is a sequence of bounded random variables which converges uniformly toward $0$. 

We now observe that the presence of a time variable would not have perturbed the previous analysis. Hence, we assume that we are also given two sequences $(t_K)_K$ and $(\theta_K)_K$ valued in respectively $[0,T-\eps)$ and $\R$ such that
\[
(\theta_K,Z_K + \eta_K) \in \partial^-\tilde U_K(t_k,Y_K),
\]
with $\lim_{ K \to \infty} \theta_K = \theta$ and $\lim_{ K \to \infty} t_K = t$. Since $U_K$ is a viscosity supersolution of \eqref{hjb}, we deduce that
\[
-\theta_K + \mathbb{E}[H(Y_K, Z_K + \eta_K)] \geq 0.
\]
Passing to the limit $K \to \infty$ we finally obtain the required result.


\end{proof}

\subsection{Comparison with the more usual value functions}
In this section, we want to compare two value function of the deterministic problem. The first one being given by our reformulation of the cost and the second one in a more standard way, to show that they coincide. In order to avoid the technical problem of the terminal condition, we consider a smooth function $G : \mptd \to \R$ and consider 
\[
U(t,µ) = \inf_{\psi,m}\left\{\int_t^T\int_{\T^d\times \R^d}L(x,z)\psi_s(x,dz)m_s(dx)ds + G(m_T)\right\},
\]
where $t \in [t,T], µ \in \mptd$, and the infimum is taken over $\cup_{\nu \in \mptd} \mathcal{A}dm(t,µ,\nu)$.
The second function we consider is
\[
V(t,µ) = \inf_{\alpha,m}\left\{\int_t^T \int_{\T^d}L(x,\alpha_s(dx))m_s(dx)ds + G(m_T) \right\},
\]
where $t \in [t,T], µ \in \mptd$, and the infimum is taken over all pairs $(\alpha,m)$ such that $m \in \mathcal{C}([t,T],\mptd)$, $m_t = µ$ and $\alpha : [t,T]\times \T^d \to \R^d$ is measurable and for all $\varphi \in \mathcal{C}^1([t,T]\times \T^d,\R)$,
\[
\int_{\T^d}\varphi(T,x)m_T(dx) - \int_{\T^d}\varphi(t,x)\mu(dx) = \int_t^T\int_{\T^d}(\partial_t\varphi(t,x) + \alpha(t,x)\cdot \nabla_x \varphi(t,x))m_s(dx)ds.
\]
Hence, $V$ is the value function of the optimal control problem on the set of measures as it is more oftently defined.\\

We want to show that those two functions are equal. In order to do so, we are going to use the comparison principle Theorem \ref{thm:compgen} on the associated HJB equation
\be\label{hjb0}
-\partial_t U + \int_{\T^d}H(x,D_µU(t,x))µ(dx) = 0 \text{ in } (0,T)\times \mptd,
\ee
with terminal condition $G$.

Concerning the cost function $L$, we assume here that \eqref{eq:hyppoly}, \eqref{hyp:dpl} and \eqref{hyp:dxl} hold. We have already seen that $U$ is a viscosity solution of \eqref{hjb0}. We start by showing the following.

\begin{Prop}
The function $V$ is a viscosity super-solution of \eqref{hjb0}.
\end{Prop}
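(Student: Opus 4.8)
The plan is to follow the blueprint of the super-solution proof given above for the diffusion case, specialised to the first order equation \eqref{hjb0} and to a continuity equation driven by an ordinary velocity field. For the backward-in-time equation \eqref{hjb0}, the super-solution property to be established reads: for every $t\in(0,T)$ and every $(\theta,\psi)\in\partial^- V(t,\mu)$,
\be
-\theta + \bar{\mathcal{H}}(t,\mu,\psi) = -\theta + \int_{\T^d\times\R^d}H(x,z)\psi(x,dz)\mu(dx) \geq 0 .
\ee
If $V$ is not already lower semicontinuous I would work with $V_*$ and a sequence $(t_n,\mu_n)\to(t,\mu)$ along which $V\to V_*$, exactly as $U_*$ was handled above. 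The first step is to record the dynamic programming principle for $V$: since $V$ carries no terminal constraint, concatenation of admissible pairs gives, for all small $\tau>0$,
\be
V(t,\mu) = \inf_{(\alpha,m)} \left\{ \int_t^{t+\tau}\int_{\T^d}L(x,\alpha_s(x))m_s(dx)ds + V(t+\tau,m_{t+\tau}) \right\},
\ee
the infimum being over admissible pairs with $m_t=\mu$.

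Next I would lift the sub-differential element as in Proposition \ref{prop:equiv}: choose random variables $(X,Z)$ with $\mathcal{L}(X,Z)(dx,dz)=\mu(dx)\psi(x,dz)$, so that $Z$ is bounded (Definition \ref{def:sup}) and $\bar{\mathcal{H}}(t,\mu,\psi)=\mathbb{E}[H(X,Z)]$. For each admissible $(\alpha,m)$ of controlled cost, the coercivity of $L$ and the representation theorems (Theorem 8.3.1 in \citep{ambrosio2005gradient} and Theorem 2.1 in \citep{jimenez2022dynamical}) let me realise $m$ as the law of a flow $X_s$ with $X_t=X$ and $dX_s=\alpha_s(X_s)\,ds$; then $\gamma:=\mathcal{L}(X_t,X_{t+\tau})\in\Pi(\mu,m_{t+\tau})$ is an admissible coupling in the sub-differential inequality, and $\int z\cdot(y-x)\psi(x,dz)\gamma(dx,dy)=\mathbb{E}[Z\cdot(X_{t+\tau}-X)]$. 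I would then insert the lower bound
\be
V(t+\tau,m_{t+\tau}) \geq V(t,\mu) + \theta\tau + \mathbb{E}[Z\cdot(X_{t+\tau}-X)] + o\!\left(\tau + \sqrt{\mathbb{E}[|X_{t+\tau}-X|^2]}\right)
\ee
into the dynamic programming principle and cancel $V(t,\mu)$, which yields
\be
0 \geq \inf_{(\alpha,m)}\left\{\int_t^{t+\tau}\mathbb{E}\big[L(X_s,\alpha_s(X_s)) + Z\cdot\alpha_s(X_s)\big]ds + \theta\tau + o\!\left(\sqrt{\mathbb{E}[|X_{t+\tau}-X|^2]}\right)\right\}.
\ee

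Finally I would estimate the infimand from below uniformly. The pointwise Legendre inequality $L(x,a)+z\cdot a\geq -H(x,z)$ bounds the bracket below by $\int_t^{t+\tau}\mathbb{E}[-H(X_s,Z)]ds$, and the Lipschitz dependence of $H$ in $x$ together with the boundedness of $Z$ replaces $H(X_s,Z)$ by $H(X,Z)$ up to $o(\tau)$; hence the infimand is at least $\tau(\theta-\mathbb{E}[H(X,Z)])-o(\tau)$. Dividing by $\tau$ and letting $\tau\to 0$ gives $0\geq\theta-\mathbb{E}[H(X,Z)]$, which is the claim.

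The hard part will be the uniform control of the error terms: I must ensure that $\sqrt{\mathbb{E}[|X_{t+\tau}-X|^2]}=O(\tau)$ so that the sub-differential remainder is a genuine $o(\tau)$ after division by $\tau$. This is where the coercivity of $L$ and the boundedness of $Z$ are decisive: the minimiser of $a\mapsto L(x,a)+z\cdot a$ is $-D_pH(x,z)$, which is bounded for bounded $z$ thanks to the growth of $L$, so any velocity far from this value makes the coercive integrand so large that it dominates the remainder. Consequently the effective infimum is realised along controls of uniformly bounded velocity, for which the displacements are of order $\tau$, while for the remaining (large-velocity) controls the infimand is trivially above $\tau(\theta-\mathbb{E}[H(X,Z)])-o(\tau)$. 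Making this dichotomy uniform over all admissible pairs, rather than the lifting and measurability steps (which are routine), is the delicate point.
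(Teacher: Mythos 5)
Your route is genuinely different from the paper's, and as written it stops exactly at its hardest point, which is a real gap. The paper does not run a dynamic programming argument at all for this statement: it observes (third bullet of the Remark after Definition \ref{def:sup}) that if $(\theta,\psi)\in\partial^-V(t,\mu)$ then the barycenter $x\mapsto\int_{\R^d}z\,\psi(x,dz)$, paired with $\theta$, belongs to the classical sub-differential $\partial^-_{clas}V(t,\mu)$ — indeed, tested on an optimal coupling, the relaxed pairing $\int\int z\cdot(y-x)\psi(x,dz)\gamma(dx,dy)$ only sees the barycenter, since $z\mapsto z\cdot(y-x)$ is linear. It then invokes Gangbo and Tudorascu \citep{gangbo2019differentiability}, which gives the super-solution inequality for classical sub-differential elements, $-\theta+\int_{\T^d}H\bigl(x,\int_{\R^d}z\,\psi(x,dz)\bigr)\mu(dx)\geq 0$, and concludes by Jensen's inequality, since $H(x,\cdot)$ is convex (it is a Legendre transform), so this classical Hamiltonian is dominated by $\bar{\mathcal{H}}(t,\mu,\psi)$. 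The structural insight you missed is that, for the relaxed notion, the \emph{super}-solution half is inherited for free from the classical theory via convexity of $H$ in $p$; it is the sub-solution half (Proposition \ref{prop:tt}) that requires genuine new work.

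The gap in your proposal is the uniform control of the remainders over the infimum, which you flag as ``the delicate point'' and then only sketch — and the sketch, as stated, does not work. The sub-differential remainder is $o\bigl(\bigl(\int|x-y|^2\gamma\bigr)^{1/2}\bigr)$, i.e.\ little-$o$ of the $L^2$ \emph{displacement} of the flow coupling, not of $\tau$; to divide by $\tau$ you must confine the competing controls to displacements of order $\tau$, and the dichotomy achieving this has to be run on integrated quantities (expected path length $\int_t^{t+\tau}\mathbb{E}|\alpha_s|\,ds$, or the cost itself), not pointwise on velocities ``far from $-D_pH(x,z)$'': a control can have tiny terminal displacement yet large intermediate excursions, and vice versa. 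Moreover the intermediate regime is genuinely tight: with coercivity $L\geq C(|\alpha|^k-1)$ and $1<k<2$, even controls of cost $O(\tau)$ only yield $\sqrt{\mathbb{E}|X_{t+\tau}-X|^2}=O(\tau^{k/2})$, and $o(\tau^{k/2})/\tau$ need not vanish, while in the regime where the displacement lies between $\tau$ and $\tau^{k/2}$ the coercive cost (of order at most $\tau^{3-k}$ by these estimates) is \emph{smaller} than the admissible remainder; so the dichotomy cannot be closed by crude comparison there. A multi-threshold argument can be completed when $k\geq 2$, but for $1<k<2$ your approach needs an additional idea, whereas the paper's two-line reduction is insensitive to all of this.
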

\begin{proof}
From an immediate adaptation of Gangbo and Tudorascu \citep{gangbo2019differentiability}, we know that for any $t < T, µ \in \mptd, (\theta,\xi) \in \partial^-_{clas}V(t,µ)$ such that $\xi$ is a bounded function,
\[
-\theta + \int_{\T^d}H(x,\xi(x))µ(dx) \geq 0.
\]
Hence taking $(\theta,\psi) \in \partial^-V(t,µ)$, we deduce that
\[
-\theta + \int_{\T^d}H\left(x,\int_{\R^d}z\psi(x,dz)\right)µ(dx) \geq 0,
\]
and the result follows from Jensen's inequality.
\end{proof}
 
 We now turn to the more subtle property of viscosity sub-solution. We are not able to establish it in all its generality, but nonetheless, the following result is sufficient to apply Theorem \ref{thm:compgen}. Indeed, the interested reader might have noted that in order to obtain comparison principle by means of our doubling of variable argument, we only need the viscosity properties to hold for elements of the sub-(or super) differential which are elements of the super-differential of the $2$-Wasserstein distance at some point.
 
 \begin{Prop}\label{prop:tt}
Assume that $D_pH$ is locally Lipschitz continuous and that for all $x\in \T^d, \xi,p \in \R^d$ with $\xi \ne 0$
\be\label{hyphprop}
\xi D_{pp}H(x,p)\xi > 0.
\ee
 Take $\lambda >0$ and consider $(\theta,\psi) \in \partial^+V(t,µ)$ such that $\psi$ satisfies $µ(dx)\psi(x,dz) = (\pi_1,\lambda(\pi_1 - \pi_2))_{\#}\gamma^o(dx,dz)$ for $\gamma^o\in \Pi^{opt}(µ,\nu)$ for some $\nu \in \mptd$. Then 
 \[
-\theta + \int_{\T^d\times \R^d}H(x,z)\psi(x,dz)µ(dx) \leq 0,
\]
 \end{Prop}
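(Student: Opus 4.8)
The plan is to follow the same pattern as the sub-solution proof in the constant-target case, working in the probabilistic lift and exploiting Legendre duality. First I would record the pointwise duality afforded by \eqref{hyphprop}: since $D_{pp}H>0$, the map $H(x,\cdot)$ is strictly convex and $C^1$, so the optimal feedback $\alpha^*(x,z):=-D_pH(x,z)$ is single-valued and the Legendre identity $H(x,z)=-L\big(x,\alpha^*(x,z)\big)+z\cdot D_pH(x,z)$ holds. Next I would lift the data: take $(X,Y)$ with $\mathcal L(X,Y)=\gamma^o$ and set $Z:=\lambda(X-Y)$, so that $\mathcal L(X,Z)=\mu(dx)\psi(x,dz)$ and $Z$ is bounded, as required in Definition \ref{def:sup}.

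Then I would build a competitor for the $V$-problem on a short interval $[t,t+\tau]$ by letting each labelled particle follow its own optimal velocity, $X_s:=X+(s-t)\,\alpha^*(X,Z)$, and set $m_s:=\mathcal L(X_s)$. The curve $(m_s)$ is an admissible state: it solves the continuity equation driven by the velocity field obtained by transporting the labels, and its running cost equals $\int_{\T^d\times\R^d}L\big(x,\alpha^*(x,z)\big)\psi(x,dz)m_s(dx)$. Feeding this pair into the dynamic programming principle for $V$ yields
\[
V(t,\mu)\le\int_t^{t+\tau}\!\!\int_{\T^d\times\R^d}L\big(x,\alpha^*(x,z)\big)\psi(x,dz)m_s(dx)\,ds+V(t+\tau,m_{t+\tau}).
\]

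I would then test $(\theta,\psi)\in\partial^+V(t,\mu)$ against $\mu'=m_{t+\tau}$, using the coupling carried by the labels themselves, namely $\Gamma=\mathcal L(X,X_{t+\tau},Z)$; since $X_{t+\tau}-X=\tau\,\alpha^*(X,Z)+o(\tau)$ this gives $V(t+\tau,m_{t+\tau})\le V(t,\mu)+\theta\tau+\mathbb E[Z\cdot(X_{t+\tau}-X)]+o(\tau)$. Substituting into the previous display, cancelling $V(t,\mu)$, dividing by $\tau$ and letting $\tau\to0$, the running term contributes $\mathbb E[L(X,\alpha^*(X,Z))]$ and the coupling term contributes $\mathbb E[Z\cdot\alpha^*(X,Z)]=-\mathbb E[Z\cdot D_pH(X,Z)]$; the Legendre identity then shows the resulting inequality reads $-\theta+\mathbb E[H(X,Z)]\le0$, that is $-\theta+\int H(x,z)\psi(x,dz)\mu(dx)\le0$, which is the claim.

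I expect the main obstacle to be the very last testing step, i.e.\ justifying that the super-differential controls the correlated quantity $\mathbb E[Z\cdot(X_{t+\tau}-X)]$ and not merely its barycentric reduction $\mathbb E[\,\mathbb E[Z\mid X]\cdot(X_{t+\tau}-X)]$. Definition \ref{def:sup} only furnishes the latter, through a product coupling $\psi(x,dz)\gamma(dx,dy)$, whereas the construction genuinely requires the joint law $\Gamma$ of the closing Remark of Section \ref{sec:hjb}. This is exactly where the hypotheses are used: the optimal-transport structure makes $Z=\lambda(X-Y)$ a deterministic function of the optimal pair, so the displacement and the velocity label stay coupled, while strict convexity \eqref{hyphprop} makes $D_pH(x,\cdot)$ strictly monotone, so that the correlation between $Z$ and $\alpha^*(X,Z)$ cannot be reduced to $\mathbb E[Z\mid X]$ without strictly lowering the Hamiltonian. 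It is precisely this that forces the statement to be limited to super-differentials coming from optimal couplings rather than arbitrary $\psi$, and it is also the point at which the admissibility of $(m_s)$ as a genuine vector-field control — the instantaneous resolution of the mass split produced by distinct velocities at a common location — must be checked with care.
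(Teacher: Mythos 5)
Your strategy coincides with the paper's: lift the optimal coupling to $(X,Y)$, set $Z=\lambda(X-Y)$, let each label move with its Legendre-optimal velocity, feed the resulting curve into the dynamic programming inequality for $V$, test the super-differential along the induced coupling, and cancel via the Legendre identity. The problem is that the one step you assert and postpone --- admissibility of $(m_s)$ --- is not a loose end to be ``checked with care''; it is essentially the entire technical content of the paper's proof, and it is exactly where both hypotheses of the statement are consumed. Recall that $V$ is an infimum over pairs $(\alpha,m)$ in which $\alpha:[t,T]\times\T^d\to\R^d$ is a single-valued measurable velocity field. Your label flow assigns different velocities to labels sitting at the same point whenever $\gamma^o$ is not induced by a map (e.g.\ $\mu$ atomic), so a priori it is generated by no admissible $\alpha$. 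The paper resolves this with a short-time injectivity lemma: for $s$ small, the map $(x,y)\mapsto x+sF(x,x-y)$, with $F(x,p)=-D_pH(x,\lambda p)$, is injective on $\mathrm{Supp}(\gamma^o)$, the proof playing the $\beta$-monotonicity of $-F$ in $p$ (this is where \eqref{hyphprop} enters) against the monotonicity $\langle x-x',y-y'\rangle\ge 0$ of the support of a quadratic-cost optimal coupling (this is where the optimality of $\gamma^o$ enters). Injectivity makes the current position determine the label, so one can define measurable maps $X_s(z),Y_s(z)$ and the genuine field $\alpha_s(z)=-D_pH\bigl(X_s(z),\lambda(X_s(z)-Y_s(z))\bigr)$, and only then is your competitor admissible. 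Without this lemma the natural repair is to replace the velocities by their barycentric projection, which is always an admissible field; but running your argument with it yields, via Jensen, only $-\theta+\int_{\T^d} H\bigl(x,\int_{\R^d}z\,\psi(x,dz)\bigr)\mu(dx)\le 0$, strictly weaker than the claim. So your proposal contains the routine part of the proof (DPP, Legendre, division by $\tau$) and omits its core.

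On your other flagged obstacle your diagnosis is correct, and sharper than the paper's own write-up: Definition \ref{def:sup} couples $z$ with $y$ only through the product $\psi(x,dz)\gamma(dx,dy)$, hence controls only $\mathbb{E}[\mathbb{E}[Z|X]\cdot(X_{t+\tau}-X)]$, whereas the final cancellation needs $\mathbb{E}[Z\cdot(X_{t+\tau}-X)]$; the paper's proof silently uses the latter, which is licensed only by the $\Gamma$-coupling variant of the super-differential sketched in the closing Remark of Section \ref{sec:hjb}. But your proposed resolution --- that the optimal-transport structure keeps displacement and label coupled --- does not close this: the conditional decoupling is built into Definition \ref{def:sup} no matter how $\psi$ was produced, and when $\mu$ has atoms $Z$ is genuinely not a function of $X$, so the two pairings differ at order $\tau$ with the unfavourable sign, the gap being $\tau\,\mathbb{E}[\mathrm{Cov}(Z,D_pH(X,Z)\,|\,X)]\ge 0$ by monotonicity of $D_pH$. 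Either one reads the statement with the $\Gamma$-version of the super-differential, as the paper implicitly does, or an extra argument is needed; neither your proposal nor, admittedly, the paper supplies one under the literal Definition \ref{def:sup}.
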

 \begin{proof}
Consider $(X,Y)$ a couple of random variables on a standard probability space such that $\mathcal{L}(X,Y) = \gamma^o$ and define
  \be\label{def:F}
F : \T^d\times\R^d \to \R^d, (x,p) \to -D_pH(x,\lambda p).
\ee
 As it is done in Proposition 5.30 in Santambrogio \citep{santambrogio2015optimal} for instance, we want to show that $m_s := \mathcal{L}(X + sF(X,X-Y))$ satisfies a continuity equation for a certain drift, and that evaluating the cost of this drift will yields the required inequality. \\
 
 Take $s > 0$ and $z \in Supp(m_s)$. By construction, there exists $(x,y) \in Supp(\gamma^o)$ such that $z = x + sF(x,x-y)$. We want to show that there exists at most one such couple $(x,y) \in \gamma^o$. Hence, take $(x,y), (x',y') \in Supp(\gamma^o)$ such that
\[
x + sF(x,x-y) = x' + sF(x',x'-y').
\]
Recall that $F$ is $C>0$ Lipschitz continuous thanks to the assumptions on $D_pH$. We deduce that 
\be\label{eqfirst}
|x - x'| = s|F(x',x'-y') - F(x, x-y)| \leq Cs (|x - x'| + |(x-y) - (x'-y')|).
\ee
Let us also write
\[
x - x' = s(F(x',x'-y') - F(x',x-y)) + s (F(x',x -y) - F(x,x-y)).
\]
Moreover, from \eqref{hyphprop}, we obtain that $-F$ is $\beta > 0$ monotone in $p$ for some $\beta > 0$, when $F$ is restricted to $\T^d\times [-1,1]^d$. Taking the scalar product against $(x'-y') - (x-y)$, we deduce that 
\[
\langle x - x' + s(F(x,x-y)-F(x',x-y)), (x'-y') - (x-y)\rangle \leq -\beta|(x'-y') - (x-y)|^2.
\]
Rearranging, we obtain that
\[
\beta|(x'-y') - (x-y)|^2 \leq |x - x'|^2 + Cs|x-x'|.|(x'-y') - (x-y)| - \langle x- x',y - y'\rangle.
\]
Since $(x,y), (x',y') \in Supp(\gamma^o)$, we obtain that the last scalar product is non-negative and thus that 
\[
\beta|(x'-y') - (x-y)|^2 \leq |x - x'|^2 + Cs|x-x'||(x'-y') - (x-y)|.
\]
Plugging this estimate into \eqref{eqfirst}, we finally deduce that
\[
|x - x'| \leq Cs\left(|x - x'| + Cs \left(\frac{Cs + \sqrt{C^2s^2 + 4\beta}}{2\beta}\right)|x -x'|\right).
\]
Hence for $s$ sufficiently small (compared to a constant which depends only on $C$ and $\beta$), we deduce that $2|x-x'| \leq |x-x'|$ and thus that $x = x'$ and hence that $y = y'$. This result of uniqueness has strong consequences. In particular, it allows to define two measurable maps $X_s(z)$ and $Y_s(z)$ such that for any $z \in Supp(m_s)$, $z = X_s(z) +sF(X_s(z), X_s(z) - Y_s(z))$. This implies in particular that $(\alpha,m)$ solve the continuity equation (at least in short time) with
\[
\alpha_s(z) = -D_pH(X_s(z),\lambda(X_s(z)- Y_s(z))).
\]
From this we deduce that 
\[
\ba
V(t,µ) &\leq \int_{t}^{t + \delta}\int_{\T^d} L(z,\alpha_s(z))m_s(dz)ds + V(t + \delta, m_{t + \delta})\\
& \leq \int_{t}^{t + \delta}\int_{\T^d} H(X_s(z),\lambda(X_s(z)- Y_s(z))) -\lambda\alpha_s(z)\cdot(X_s(z)-Y_s(z)) m_s(dz)ds\\
&\quad +\int_{t}^{t + \delta}\int_{\T^d} L(z,\alpha_s(z))- L(X_s(z),\alpha_s(z))m_s(dz)ds + V(t + \delta, m_{t + \delta}).\\
& \leq  \int_{t}^{t + \delta}\mathbb{E}[H(X,\lambda(X-Y)) + \lambda D_pH(X,\lambda(X-Y))\cdot(X-Y)]ds\\
&\quad + C\int_t^{t+\delta}\mathbb{E}[sD_pH(X,\lambda(X-Y))]ds + V(t + \delta, m_{t + \delta}).
\ea
\]
Hence, we deduce that
\[
\ba
0 \leq&  \int_{t}^{t + \delta}\mathbb{E}[H(X,\lambda(X-Y)) + \lambda D_pH(X,\lambda(X-Y))\cdot(X-Y)]ds\\
&\quad + C\int_t^{t+\delta}\mathbb{E}[sD_pH(X,\lambda(X-Y))]ds +\theta\delta - \delta\mathbb{E}[\lambda(X-Y)\cdot D_pH(X,\lambda(X-Y)] + o(\delta).
\ea
\]
We then deduce the result by diving by $\delta$ and taking the limit $\delta \to 0$.
 \end{proof}

 As a consequence of the two previous Propositions as well as of Theorem \ref{thm:compgen}, we obtain the following.
  \begin{Theorem}
 Assume that \eqref{eq:hyppoly}, \eqref{hyp:dpl}, \eqref{hyp:dxl} and \eqref{hyphprop} hold and that $D_pH$ is locally Lipschitz continuous, then the two functions $U$ and $V$ are equal.
 \end{Theorem}
 \begin{proof}
 The proof simply consists in using Theorem \ref{thm:compgen} to compare two time $U$ and $V$. Thanks to \eqref{eq:hyppoly}, \eqref{hyp:dpl} and \eqref{hyp:dxl}, we know that $U$ is continuous. The same argument as the one we presented for $U$ yields that $V$ is also continuous. Hence,  they both satisfy the terminal condition $G$ and the result is proven.
 \end{proof}

\section{Conclusion and perspectives}\label{sec:conclusion}
\subsection{Summary of the results}
In conclusion, we summarize the results that we have brought on the value function $U$ defined in \eqref{defUgood}.

We introduce the following Hypothesis.

\begin{hyp}\label{hyp:final}
The Hamiltonian $H$ is well defined and continuous and there exists $k > 1$ and $C \geq 0$ such that for all $x,y \in \T^d$, $\alpha,p \in \R^d$
\[
L(x,\alpha) \leq C( 1 + |\alpha|^k),
\]
\[
|D_xL(x,\alpha)| \leq C ( 1 + L(x,\alpha)),
\]
\[
|D_pL(x,\alpha)||\alpha|\leq C (1 + L(x,\alpha)),
\]
\[
|H(x,p) - H(y,p)| \leq C(1+|p|)|x-y|.
\]
\end{hyp}
Recall that we denote by $\omega(t) := \sup_{s\leq t,µ,\nu \in \T^d} U_{det}(s,µ,\nu)$.
\begin{Theorem}
Assume that Hypothesis \ref{hyp:final} holds and that the target process $(\nu_t)_{t \geq 0}$ is a jump process described by the operator $\mathcal{T}$ and the intensity $\lambda$. Assume that $\lambda$ is continuous and that there exist $C >0$ and $\gamma > -1$ such that for $T-t \leq C^{-1}$
\[
 \lambda(t)\omega(t) \leq C (T-t)^{\gamma},
\]
 and
\[
 \lambda(t) \leq C (T-t)^{-1}\int_t^T\lambda.
\]
Then $U$ defined in \eqref{defUgood} is the unique viscosity solution of \eqref{hjbl} such that 
\[
\lim_{t \to T}\sup_{µ,\nu}|U(t,µ,\nu) - U_{det}(t,µ,\nu)| = 0.
\]
\end{Theorem}

\begin{Theorem}
Assume that Hypothesis \ref{hyp:final} holds for $k = 2$, with the reverse inequality
\[
\forall x \in \T^d, \alpha \in \R^d, \quad C^{-1}|\alpha|^2 -C\leq L(x,\alpha),
\]
for some $C > 0$ and that the target process $(\nu_t)_{t \geq 0}$ is given by $\nu_t = (\tau_{W_t})_{\#}\nu$ where $(W_t)_{t \geq 0}$ is the strong solution of \eqref{sde}. Assume that $\sigma$ is bounded and satisfies
\[
\sigma(t) \sim K(T-t)^{\gamma} \text{ as } t \to T,
\]
for some $K \ne 0,\gamma > \frac 12$. Then $U$ is the unique viscosity solution of \eqref{hjbw} which satisfies 
\[
\lim_{t \to T}\sup_{µ,w}|U(t,µ,w) - U_{det}(t,µ,(\tau_w)_{\#}\nu)| = 0.
\]
\end{Theorem}
\begin{Rem}
In particular $L$ has exactly a quadratic growth here. As we mentioned in Remark \ref{rem:BDG}, the case in which it grows faster than quadratically can be treated similarly by using the Burkholder-Davis-Gundy inequalities in the study of the singularity.
\end{Rem}

 \subsection{Potential approximating schemes for the HJB equation}
Discretizing both time and the space of measures, one can arrive at usual discrete scheme for dynamic optimal control problem, namely using the notion of Wasserstein barycenters \citep{carlier}. It seems that the stability properties of viscosity solutions should be helpful to prove some convergence properties. On the other hand, fast methods to compute Wasserstein barycenters now exist \citep{cuturi2014fast} and could lead to a tractable numerical treatment of the problem.
 
 \subsection{More general optimal control problem}
 The techniques developed in Section \ref{sec:hjb} seem to be well-suited to study more general optimal control problems on the space of probability measures. With Pierre-Louis Lions (Coll\`ege de France), we are currently generalizing them to treat the case of the control of the parabolic continuity equation
 \[
 \partial_t m - \nu\Delta m + \text{div}(\alpha m) = 0 \text{ in } (0,T)\times \T^d,
 \]
 where the control is still $\alpha$, but the presence of the term in $\nu$ makes the analysis more complex.
 
\section*{Acknowledgments}
The author is thankful to P.-L. Lions and to S. Sorin for helpful discussions on this subject which helped, hopefully, in the presentation of the paper. The author acknowledges a partial support from the Chair FDD from Institut Louis Bachelier and from the Lagrange Mathematics and Computation Research Center. Data sharing not applicable to this article as no datasets were generated or analysed during the current study.
 
\bibliographystyle{plainnat}
\bibliography{bibmatrix}

\appendix

\end{document}